\theoremstyle{definition}
\newtheorem* {theorem*}{Theorem}
\theoremstyle{definition}
\newtheorem* {conjecture*}{Conjecture}
\theoremstyle{definition}
\newtheorem{theorem}{Theorem}[section]
\theoremstyle{definition}
\theoremstyle{definition}
\newtheorem{observation}{Observation}[section]
\theoremstyle{definition}
\newtheorem{case}{Case}
\newtheorem{lemma}{Lemma}[section]
\theoremstyle{definition}
\theoremstyle{definition}
\theoremstyle{definition}
\newtheorem{conjecture}{Conjecture}[section]
\newtheorem{proposition}{Proposition}[section]
\newtheorem* {remark}{Remark}
\newtheorem{example}{Example}[section]
\theoremstyle{definition}
\theoremstyle{definition}
\newtheorem* {remarks}{Remarks}
\numberwithin{equation}{section}
\def\({\left(}
\def\){\right)}
   \newcommand{\FF}{\mathbb{F}}  \newcommand{\CC}{\mathbb{C}}  \newcommand{\QQ}{\mathbb{Q}}   \newcommand{\cP}{\mathcal{P}} \newcommand{\cA}{\mathcal{A}}
  \newcommand{\cS}{\mathcal{S}} 
\newcommand{\cJ}{\mathcal{J}} 
\newcommand{\cC}{\mathcal{C}}
\newcommand{\cB}{\mathcal{B}}
\newcommand{\cD}{\mathcal{D}}
\newcommand{\cZ}{\mathcal{Z}}
\def\tr{\mathrm{tr}}    \def\ZZ{\mathbb{Z}}  \def\X{\mathfrak{X}}  \def\Ind{\mathrm{Ind}} \def\GL{\mathrm{GL}}   \def\Res{\mathrm{Res}}    \def\spanning{\textnormal{-span}}   \def\cB{\mathcal{B}}
\def\Irr{\mathrm{Irr}}  \def\wt{\widetilde}
   \newcommand{\fkn}{\mathfrak{n}}
\newcommand{\h}{\mathfrak{h}}
\def\X{\mathcal{X}}
\newcommand{\fka}{\mathfrak{a}}
\newcommand{\leftexp}[2]{{\vphantom{#2}}^{#1}{#2}}
\def\fk{\mathfrak}
\def\barr{\begin{array}}
\def\earr{\end{array}}
\def\ba{\begin{aligned}}
\def\ea{\end{aligned}}
\def\be{\begin{equation}}
\def\ee{\end{equation}}
\def\cS{\mathcal{S}}
\def\ol{\widehat}
\def\olfkl{\ol{\fk l}}
\def\olfks{\ol{\fk s}}
\def\oll{\ol{L}}
\def\ols{\ol{S}}
\def\UT{\mathrm{UT}}
\def\fkt{\fk{u}}
\def\logpsi{\psi^{\exp}}
\def\tobedecided{}
\def\exp{\mathrm{Exp}}
\renewcommand{\@makefnmark}{\mbox{\textsuperscript{}}}
\begin{document}
\title{Exotic characters of unitriangular matrix groups}
\author{Eric Marberg\footnote{This research was conducted with government support under
the Department of Defense, Air Force Office of Scientific Research, National Defense Science
and Engineering Graduate (NDSEG) Fellowship, 32 CFR 168a.} \\ Department of Mathematics \\ Massachusetts Institute of Technology \\ \tt{emarberg@math.mit.edu}}
\date{}

\maketitle

%\setcounter{tocdepth}{2}
%\tableofcontents

\begin{abstract}
Let $\UT_n(q)$ denote the unitriangular group of unipotent $n\times n$ upper triangular matrices over a finite field with cardinality $q$ and prime characteristic $p$.  It has been known for some time that when $p$ is fixed and $n$ is sufficiently large, $\UT_n(q)$ has ``exotic'' irreducible characters taking values outside the cyclotomic field $\QQ(\zeta_p)$.  However, all proofs of this fact to date have been both non-constructive and computer dependent. 
In the preliminary work \cite{supp0}, we defined a family of orthogonal characters decomposing the supercharacters of an arbitrary algebra group.
By applying this construction to the unitriangular group, we are able to
derive by hand
an explicit description of a family of characters of $\UT_n(q)$ taking values in arbitrarily large cyclotomic fields.
In particular, we prove that if $r$ is a positive integer power of $p$ and $n>6r$, then $\UT_n(q)$ has an irreducible character of degree $q^{5r^2-2r}$ which takes values outside $\QQ(\zeta_{pr})$.  By the same techniques, we are also able to construct explicit Kirillov functions which fail to be characters  of $\UT_n(q)$ when $n>12$ and $q$ is arbitrary.
\end{abstract}

\section{Introduction}

Let $\FF_q$ be a finite field with $q$ elements and write $\UT_n(q)$ to the denote the unitriangular group of $n\times n$ upper triangular matrices over $\FF_q$ with all diagonal entries equal to 1.  This is a $p$-Sylow subgroup of the general linear group $\GL(n,\FF_q)$, where $p>0$ is the characteristic of $\FF_q$.  Researchers have known for some time that for large values of $n$, there exist ``exotic'' irreducible characters of $\UT_n(q)$ which have values outside the cyclotomic field $\QQ(\zeta_p)$, where $\zeta_p$ is a primitive $p$th root of unity.  However, proofs of this fact to date have been largely both nonconstructive and computer dependent. 

 For example, Isaacs and Karaguezian showed indirectly that $\UT_n(2)$ has a nonreal character for $n>12$ by writing down a matrix in $\UT_{13}(2)$ not conjugate to its inverse \cite{IK1,IK2}.  The same authors later gave a different computational proof by implementing algorithms to compute the character degrees  and involutions of $\UT_n(2)$ \cite{IK05}.  More generally,  Vera-L\'opez and Arregi have shown through some detailed calculations involving the help of a computer algebra system that for $q$ prime,  $\UT_n(q)$ has an element not conjugate to its $(q+1)$th power for all $n>6q$ \cite{VeraLopez2004}.  By standard results in character theory (see \cite[Chapter 6]{Isaacs}), these conjugacy properties imply the existence of our exotic characters, but do not shed much light on any of their attributes.

One obstacle to providing more constructive proofs of these facts comes from our incomplete understanding of the representations of $\UT_n(q)$ when the characteristic of $\FF_q$ is small compared to $n$.   
Combatting this problem, we describe in \cite{supp0}  a generic method of constructing  characters of algebra groups such as the untriangular group, building on combined work of Andr\'e \cite{Andre1}, Yan \cite{Yan},  and Diaconis and Isaacs \cite{DI}.  
  The primary intent of this work is to use this new construction to identify certain irreducible characters of $\UT_n(q)$ and then to prove by hand that these characters take values outside various cyclotomic fields.  Our techniques shed light on how ``exotic'' characters of $\UT_n(q)$ arise, and provide a computer independent proof of the following theorem.

\begin{theorem*} Suppose $\FF_q$ has characteristic $p$ and let $r=p^e$ for an integer $e>0$.  If $n > 6r$, then  $\UT_n(q)$ has an irreducible character of degree $q^{5r^2-2r}$ whose set of values is contained in $\QQ(\zeta_{pr})$ but not $\QQ(\zeta_r)$.  
\end{theorem*}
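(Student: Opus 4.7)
The plan is to apply the orthogonal decomposition of supercharacters from \cite{supp0} to a carefully chosen supercharacter of $\UT_n(q)$, extract a constituent of degree $q^{5r^2 - 2r}$ which turns out to be irreducible, and then evaluate this constituent on an explicit unipotent matrix to pin down its field of values.

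First I would choose an $\FF_q$-labeled set partition $\lambda$ of $\{1, 2, \dots, n\}$ whose arcs are supported on the initial segment $\{1, \dots, 6r\}$, so that it fits inside $\{1,\dots,n\}$ by the hypothesis $n > 6r$. The arcs should be arranged in a specific nested-crossing pattern of depth $r$, engineered so that the corresponding supercharacter $\chi^\lambda$ of $\UT_n(q)$ has degree equal to $q^{5r^2 - 2r}$ times a factor carrying the ``grading group'' structure of \cite{supp0}. The exponent $5r^2 - 2r$ should emerge as a sum of binomial coefficients associated with the chosen arc configuration, so that identifying the correct combinatorial shape of $\lambda$ is the starting point of the argument.

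Next I would apply the construction of \cite{supp0} to decompose
\[
\chi^\lambda \;=\; \sum_{\eta \in \widehat{A}} \psi_\eta,
\]
where $A$ is a finite abelian group attached to $\lambda$, and each $\psi_\eta$ is a nonnegative integer multiple of some irreducible character of $\UT_n(q)$ of degree $q^{5r^2 - 2r}$. Choosing $\lambda$ so that $A$ is cyclic of order $pr$, I would single out the component $\psi := \psi_\eta$ corresponding to a character $\eta$ of order exactly $pr$. To conclude that this $\psi$ is itself irreducible (not merely proportional to one), I would restrict to an appropriate pattern subgroup and invoke a multiplicity-one criterion from \cite{supp0}. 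The containment of the values of $\psi$ inside $\QQ(\zeta_{pr})$ is automatic from the construction, which expresses each $\psi_\eta(u)$ as a $\ZZ$-linear combination of $pr$th roots of unity.

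To exhibit a value outside $\QQ(\zeta_r)$, I would evaluate $\psi$ at an explicit matrix $u \in \UT_n(q)$ whose nonzero off-diagonal entries lie at prescribed ``crossing'' positions in the support of $\lambda$. The character formula of \cite{supp0} should reduce $\psi(u)$ to a Gauss-type sum of the form $q^c \sum_{t \in \FF_q} \theta(a t^{r} + b t)$, where $\theta : \FF_q \to \CC^\times$ is a fixed nontrivial additive character and $a, b \in \FF_q$ depend on $u$. A Galois-theoretic argument on this sum, combined with an appropriate choice of $u$, should then produce a nontrivial $\mathrm{Gal}(\QQ(\zeta_{pr})/\QQ(\zeta_r))$-orbit and so place $\psi(u)$ outside $\QQ(\zeta_r)$.

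The principal obstacle is the multiplicity-one step: proving that $\psi_\eta$ equals an irreducible character, rather than an integer multiple of one, requires detailed control of the Kirillov-style stabilizer subalgebra underlying $\chi^\lambda$, which is exactly the point where the orbit method fails in small characteristic and where the auxiliary decomposition of \cite{supp0} must do its real work. A close second is the Gauss-sum computation, where the arcs of $\lambda$ and the entries of $u$ must be chosen so that no fortuitous cancellation collapses the sum into $\QQ(\zeta_r)$; balancing these combinatorial constraints is what should force both the bound $n > 6r$ and the degree exponent $5r^2 - 2r$.
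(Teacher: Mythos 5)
Your high-level outline is in the right spirit --- use the orthogonal decomposition of \cite{supp0}, attach an abelian group to the problem, and invoke Galois theory --- but the two steps you identify as the principal obstacles are exactly where the proposal departs from the paper, and the Gauss-sum step in particular would probably not go through.

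The paper never evaluates the exotic constituent at an explicit matrix $u$, and its values are not controlled by a sum of the form $q^c\sum_{t\in\FF_q}\theta(at^r+bt)$; an induced character from $\ols_\lambda$ to $\UT_n(q)$ does not collapse to an Artin--Schreier-type sum. What happens instead is structural: Theorem~\ref{structural}(3) gives a degree-preserving bijection $\psi \mapsto \Ind_{\ols_\lambda}^G(\psi)$ between constituents of the ramified supercharacter of $\ols_\lambda$ and the irreducible constituents of $\xi_\lambda$, and the remark following it shows this bijection is equivariant for $\mathrm{Gal}\bigl(\QQ(\zeta_{pr})/\QQ(\zeta_p)\bigr)$. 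So once a constituent on the $\ols_\lambda$ side takes a value outside $\QQ(\zeta_r)$, its induction to $\UT_n(q)$ automatically does too; no character evaluation in $\UT_n(q)$ is required. The exotic values are manufactured entirely inside a small abelian quotient and then transported up.

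The abelian object is also not quite the one you describe. You posit a cyclic group $A$ of order $pr$ grading $\chi^\lambda$ into pieces $\psi_\eta$ that are multiples of irreducibles. In the paper one first passes from $\chi_\lambda$ to the smaller constituent $\xi_\lambda$, then computes $\olfks_\lambda$ and shows it has a two-sided ideal $\h$ with $\ols_\lambda/(1+\h)\cong \mathrm{A}_{r+1}(q)$, an abelian algebra group that has exponent $pr$ but is not cyclic. The ramified supercharacter of $\ols_\lambda$ factors as a linear character tensored with the inflation of $\chi_\kappa$ from $\mathrm{A}_{r+1}(q)$, and $\chi_\kappa$ splits into $q^{r-1}$ \emph{distinct} linear characters, some of which take every $pr$th root of unity as a value (Proposition~\ref{cmplx-constits}). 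Multiplicity one and irreducibility of the $q^{r-1}$ constituents of $\xi_\lambda$ then follow immediately from Theorem~\ref{structural}(3); no restriction to a pattern subgroup or separate multiplicity-one criterion is needed. In short, your plan has the right skeleton but replaces the paper's two clean structural steps (reduction to an explicit abelian quotient, Galois equivariance of the constituent bijection) with computations (a multiplicity argument via pattern subgroups, a Gauss-sum evaluation) that are both harder and, in the Gauss-sum case, unlikely to be the form the value actually takes.
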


\def\Ad{\mathrm{Ad}}

These methods have another application concerning the Kirillov functions of $\UT_n(q)$.
Let $\fkt_n(q)$ denote the algebra of $n\times n$ upper triangular matrices over $\FF_q$ with all diagonal entries equal to 0.
There is a coadjoint action of $\UT_n(q)$ on the irreducible characters of $\fkt_n(q)$ viewed as an abelian group, given by $g : \vartheta \mapsto \vartheta \circ \Ad(g)^{-1}$ where $\Ad(g)(X) = gXg^{-1}$ for $g \in \UT_n(q)$ and $X \in \fkt_n(q)$.
If $\Omega$ is a coadjoint orbit, then the corresponding \emph{Kirillov function} $\psi : \UT_n(q) \to \QQ(\zeta_p)$ is the complex-valued function 
\[ \psi(g) =|\Omega|^{-1/2} \sum_{\vartheta \in \Omega} \vartheta(g-1),\qquad\text{for }g\in \UT_n(q).
\] Kirillov \cite{K} conjectured that these functions comprise all the irreducible characters of $\UT_n(q)$, and we observed in \cite{supp0} that a recent calculation of Evseev \cite{E} shows non-constructively that this conjecture holds if and only if $n\leq 12$.  Here we will be able to give a constructive proof of the ``only if'' direction of this result;
 in particular we shall identify a Kirillov function of degree $q^{16}$ which is not a character of $\UT_n(q)$ when $n> 12$.

Our methods also shed some light on a different type of Kirillov function.  If $\exp : \fkt_n(q) \to \UT_n(q)$ denotes the truncated exponential map 
$\exp(X) = 1+X + \frac{1}{2}X^2 + \dots + \frac{1}{(p-1)!} X^{p-1}$, %,\qquad\text{for }X \in \fkt_n(q),\] 
then an \emph{exponential Kirillov function} $\psi^\exp : \UT_n(q) \to  \QQ(\zeta_p)$ is a function defined by
\[ \psi^\exp\( \exp(X) \) = \psi(1+X),\qquad\text{for }X \in \fkt_n(q)\] for some Kirillov function $\psi$.
Sangroniz \cite{Sangroniz} has shown that every irreducible character of $\UT_n(q)$ is an exponential Kirillov function if $n < 2p$, and an indirect consequence of Vera-Lopez and Arregi's work \cite{VeraLopez2004}
is that there exist exponential Kirillov functions which are not characters when $q$ is prime and $n>6q$.  
We extend this result to arbitrary finite fields by identifying an exponential Kirillov function of degree $q^{5p^2-p}$ which is not a character of $\UT_n(q)$ when $n>6p$.  
It seems conceivable that our construction of this function is the simplest possible, so we conjecture the following.

\begin{conjecture*} If $p>0$ is the characteristic of $\FF_q$, then 
the irreducible characters and exponential Kirillov functions of $\UT_n(q)$ coincide if and only if $n\leq 6p$.  
\end{conjecture*}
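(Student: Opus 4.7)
The plan is to establish the two directions separately. For the ``only if'' direction -- that exponential Kirillov functions fail to exhaust the irreducible characters when $n > 6p$ -- I would specialize the main Theorem to $r = p$: the degree $q^{5p^2 - p}$ character given there is attached to a specific coadjoint orbit $\Omega$, and the corresponding Kirillov function $\psi$ produces an exponential Kirillov function $\psi^{\exp}$ whose values I would compute directly. The obstruction to $\psi^{\exp}$ being a character is a Galois argument of the kind used by Vera-L\'opez and Arregi \cite{VeraLopez2004}: one identifies an element $g \in \UT_n(q)$ for which $\psi^{\exp}(g)$ and $\psi^{\exp}(g^{q+1})$ are not related by the natural Galois action on $\QQ(\zeta_{p})$, contradicting the fact that $g$ and $g^{q+1}$ must take Galois-conjugate values under any irreducible character.

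For the ``if'' direction, assume $n \leq 6p$. The task is to extend Sangroniz's theorem, which covers $n < 2p$, to the window $2p \leq n \leq 6p$. The plan is to show first that every irreducible character of $\UT_n(q)$ is an exponential Kirillov function; the reverse inclusion should then follow by counting, using that $\exp: \fkt_n(q) \to \UT_n(q)$ is a $\UT_n(q)$-equivariant bijection (since conjugation is a ring map) and hence induces a bijection between adjoint orbits on $\fkt_n(q)$ and conjugacy classes of $\UT_n(q)$, so the number of coadjoint orbits on $\fkt_n(q)^*$ matches the number of irreducible characters. To prove that every character is an exponential Kirillov function, I would combine the supercharacter decomposition from \cite{supp0} with a refined analysis of $\exp$ on the filtration $\fkt_n(q) \supset \fkt_n(q)^2 \supset \dots$, exploiting that $\exp(X)$ differs from the formal power series exponential only by terms lying in $\fkt_n(q)^p$.

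The main obstacle is that once $n \geq 2p$ the map $\exp$ fails to be a group homomorphism: the Baker--Campbell--Hausdorff correction $\log\bigl(\exp(X)\exp(Y)\bigr) - (X+Y)$ involves iterated brackets of length $\geq p$ which are nonzero in $\fkt_n(q)$. I would try to control this error within the ideal filtration and show that, when averaged over a coadjoint orbit, its net contribution to $\psi^{\exp}$ cancels -- precisely as long as $n \leq 6p$. The matching of the proposed threshold $6p$ with Vera-L\'opez and Arregi's $6q$-bound for the failure of $(q+1)$-power conjugacy is strong evidence that this cancellation is sharp, but making it rigorous appears to require a delicate combinatorial identity relating iterated brackets of depth $\geq p$ to the concrete orbit structure produced by the orthogonal character construction of \cite{supp0}.
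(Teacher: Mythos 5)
\textbf{Careful: this statement is a \emph{Conjecture} in the paper, not a theorem, and the paper does not prove it.} The paper proves only the ``only if'' half: setting $r=p$ in Proposition \ref{main-prop}(3) produces an exponential Kirillov function $\logpsi_\lambda$ of $\UT_n(q)$ that is not a character whenever $n>6p$. The ``if'' half is open — the paper even remarks explicitly that beyond the computer-verified case $p=2$ there is ``no proof to date that the characters of $\UT_n(q)$ are necessarily $\QQ(\zeta_p)$-valued if $n\leq 6p$,'' and that even $p=3$ is computationally infeasible. So your proposal cannot succeed as written; you should present this as a conjecture with a proof of the ``only if'' direction.

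For the direction the paper does prove, your route differs from the paper's and has a genuine gap. You propose a Galois argument in the style of Vera-L\'opez and Arregi, finding $g$ with $\psi^{\exp}(g)$ and $\psi^{\exp}(g^{q+1})$ not Galois-conjugate. But the relevant fact is that \emph{any} character $\chi$ satisfies $\chi(g^m)=\sigma_m(\chi(g))$ when $\gcd(m,\operatorname{ord}(g))=1$; it is not that $g$ and $g^{q+1}$ ``must take Galois-conjugate values,'' since Vera-L\'opez and Arregi's whole point is that $g$ and $g^{q+1}$ may fail to be conjugate. More importantly, their bound is $n>6q$ (for $q$ prime), not $n>6p$, so you cannot directly import it when $q\neq p$. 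The paper avoids all of this: Theorem \ref{structural}(3) reduces the question of whether $\logpsi_\lambda$ is a character of $\UT_n(q)$ to whether $\logpsi_\mu$ is a character of the much smaller group $\ols_\lambda$; Observation \ref{inflation} further reduces to the abelian group $\mathrm{A}_{p+1}(q)$; and on an abelian group, being a character is the same as being a homomorphism, which Proposition \ref{cmplx-constits}(3) refutes for $\logpsi_\kappa$ by an explicit two-line computation. That is a substantially more elementary and more robust argument than the Galois/conjugacy approach.

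For the ``if'' direction, your counting observation (the truncated exponential is $\Ad$-equivariant, hence the number of coadjoint orbits equals the number of irreducible characters) is correct, but it holds for all $n$ and so carries no information about the threshold $6p$. The substantive step — controlling the Baker–Campbell–Hausdorff error terms of depth $\ge p$ and showing their contribution to $\psi^{\exp}$ cancels when $n\le 6p$ — is exactly what is missing; you acknowledge yourself that it ``appears to require a delicate combinatorial identity'' you do not have. That admission is accurate: no such identity is known, and the paper only speculates that a combination of Sangroniz's Lemmas 9–11 with Theorem \ref{structural} and Lemma \ref{monomial} might yield the weaker bound $n\le 3p$. Your proposal for this half is a research program, not a proof.
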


While one can verify this statement using computers  for $p=2$ (see \cite{E}),   examining even the case $p=3$ exits the realm of currently feasible calculations.  Also, there is no proof to date that the characters of $\UT_n(q)$ are necessarily $\QQ(\zeta_p)$-valued if $n\leq 6p$.  Nevertheless, it seems possible, just from the experience of proving the results herein,
 that $n=6p$ is the ``breaking point'' after which the algebra groups $\UT_n(q)$ can manifest  irreducible characters which are not exponential Kirillov functions.  We mention that computer experiments suggest that one might be able to use a clever combinatorial argument$-$combining Lemmas 9, 10, and 11 in \cite{Sangroniz} with Theorem \ref{structural} and Lemma \ref{monomial} below %, and the fact that each supercharacter of $\UT_n(q)$ is indexed by a unique quasi-monomial $\lambda \in \fkt_n(q)^*$$-$
 to show that the ``if'' direction of this conjecture holds for $n\leq 3p$.  Improving this bound to $n\leq 6p$ probably will require more robust techniques, however.

\section{Preliminaries}\label{prelim-sect}

Here we briefly  establish our notational conventions, then describe several different functions on $\UT_n(q)$ which will be of interest in our later computations.    We present this material from the more general standpoint of algebra groups, although our applications shall primarily concern $\UT_n(q)$.

\subsection{Conventions and notation}

Given a finite group $G$, we let $\langle\cdot,\cdot\rangle_G$ denote the standard inner product on the complex vector space of functions $G \to \CC$ defined by 
$ \langle f,g\rangle_G = \frac{1}{|G|} \sum_{x \in G} f(x) \overline{g(x)}$, %,\qquad\text{for functions }f,g:G\to \CC,\] 
and write $\Irr(G)$ to denote the set of complex irreducible characters of $G$, or equivalently the set of characters $\chi$ of $G$ with $\langle \chi,\chi \rangle_G = 1$.  A function $ G\to \CC$ is then a character if and only if it is a nonzero sum of irreducible characters with nonnegative integer coefficients.

 If $f: S \to T$ is a map and $S'\subset S$, then we write $f\downarrow S'$ to denote the restricted map $S'\to T$.  For functions on groups, we may also write $\Res_H^G(\chi) = \chi\downarrow H$ to denote the restriction of $\chi : G\to \CC$ to a subgroup $H$.  
If $\chi$ is any complex valued function whose domain includes the subgroup $H\subset G$, then we define the induced function $\Ind_{H}^G(\chi):G \to \CC$ by the formula
 \be\label{frob} \Ind_H^G(\chi)(g) = \frac{1}{|H|} \sum_{\substack { x \in G  \\ xgx^{-1} \in H}} \chi(xgx^{-1}),\qquad\text{for }g\in G.\ee  We recall that restriction takes characters of $G$ to characters of $H$ and induction take characters of $H$ to characters of $G$.%, and that these operations are adjoint with respect to the scalar product 
 %if $\chi$ restricts to a character of $H$ then $\Ind_H^G(\chi)$ is a character of $G$. % Furthermore, we recall the identity known as Frobenius reciprocity which states that \[\langle \Ind_H^G(\chi), \psi \rangle_G = \langle \chi, \Res_H^G(\psi)\rangle_H\] if $\chi$ is a character of $H$ and $\psi$ is a character of $G$.

Throughout, $q>1$ is some fixed prime power and $\FF_q$ is a finite field with $q$ elements.  We write $\FF_q^+$ to denote the additive group of the field and $\FF_q^\times$ to denote the multiplicative group of nonzero elements.  For any positive integer $r$ we let $\zeta_r$ denote the primitive $r$th root of unity $\zeta_r = e^{2\pi i / r}$, and write $\QQ(\zeta_r)$ to denote the $r$th cyclotomic field, given by adjoining $\zeta_r$ to $\QQ$.  For integers $m,n$ we let 
\[ [m,n] = \{ t \in \ZZ : m\leq t \leq n\}\qquad\text{and}\qquad [n] = [1,n] = \{ 1,2,\dots,n\}.\]
%
%%
%If $V$ is a vector space over a field $\FF$, then we write $V^*$ to denote its dual space, the vector space of $\FF$-linear maps $V\to \FF$.
% For integers $m,n$ we let 
%\[ [m,n] = \{ t \in \ZZ : m\leq t \leq n\}\qquad\text{and}\qquad [n] = [1,n] = \{ 1,2,\dots,n\}.\]
%Throughout we write $\fkt_n(q)$ and $\UT_n(q)$ to refer to the algebra and algebra group 
%\[ \ba \fkt_n(q) & = \text{the set of upper triangular $n\times n$ matrices over $\FF_q$ with zeros on the diagonal,} \\
%\UT_n(q)&=\text{the set of upper triangular $n\times n$ matrices over $\FF_q$ with ones on the diagonal.}\ea\] 
%
 Given integers $1\leq i<j \leq n$ we let 
\[ \ba e_{ij} & = \text{the matrix in $\fkt_n(q)$ with 1 in position $(i,j)$ and zeros elsewhere,}\\
e_{ij}^* &=\text{the $\FF_q$-linear map $\fkt_n(q)\to \FF_q$ given by $e_{ij}^*(X) = X_{ij}$.}\ea\]
These matrices and maps are then dual bases of $\fkt_n(q)$ and its dual space $\fkt_n(q)^*$.

\subsection{Algebra groups}
%\subsection{Algebra groups and their actions}
\label{alg}

Let $\fkn$ be a (finite-dimensional, associative) nilpotent $\FF_q$-algebra, and $\fkn^*$ its dual space of $\FF_q$-linear maps $\fkn \to \FF_q$.  Write $G = 1+\fkn$ to denote the corresponding \emph{algebra group}; this is the set of formal sums $1+X $ with $X \in \fkn$, made a group via the multiplication \[(1+X)(1+Y) = 1+X+Y+XY.\]  As prototypical examples, we take $\fkn$ to be the algebra $\fkt_n(q)$ of strictly upper triangular $n\times n$ matrices over $\FF_q$ and $G$ to be the \emph{unitriangular group} $\UT_n(q) = 1 + \fkt_n(q)$.  A considerable literature exists on algebra groups and their representations, of which the reader might take \cite{I95} as a starting point.

We call a subgroup of $G=1+\fkn$ of the form $H = 1+\h$ where $\h \subset \fkn$ is a subalgebra an \emph{algebra subgroup}.  
If $\h \subset \fkn$ is a two-sided ideal then $H$ is a normal algebra subgroup of $G$, and the map $gH \mapsto 1+(X+\h)$ for  $g=1+X \in G$ gives an isomorphism $G/H \cong 1 + \fkn /\h$.  In practice we shall usually identify the quotient $G/H$ with the algebra group $1 + \fkn /\h$ by way of this canonical map.

\def\osdef{\overset{\mathrm{def}}}

\subsection{Kirillov functions $\psi_\lambda$ and $\logpsi_\lambda$}

For the duration of this work, $\theta : \FF_q^+\to \CC^\times$ denotes a fixed, nontrivial homomorphism from the additive group of $\FF_q$ to the multiplicative group of nonzero complex numbers.  Observe that $\theta$ takes values in the cyclotomic field $\QQ(\zeta_p)$, where $p>0$ is the characteristic of $\FF_q$.
For each $\lambda \in \fkn^*$, we define $\theta_\lambda : G \to \QQ(\zeta_p)$ as the function with
\[ \theta_\lambda(g) = \theta\circ \lambda(g-1),\qquad\text{for }g \in G.\]  
The maps $\theta\circ \lambda : \fkn \to \CC$ are the distinct irreducible characters of the abelian group $\fkn$, and from this it follows that the functions $\theta_\lambda : G\to \CC$ are an orthonormal basis (with respect to $\langle\cdot,\cdot\rangle_G$) for all functions on the group.   

The most generic methods we have at our disposal for constructing characters of algebra groups involve summing the functions $\theta_\lambda$ over orbits in $\fkn^*$ under an appropriate action of $G$.
Kirillov functions provide perhaps the natural example of such a construction.  Their definition relies on the \emph{coadjoint} action of $G$ on $\fkn^*$, by which we mean the right action $(\lambda,g) \mapsto \lambda^{g}$ where we define
\[ %X^g = g^{-1} X g \qquad\text{and}\qquad
 \lambda^g(X) = \lambda(g X g^{-1}),\qquad\text{for } \lambda \in \fkn^*,\ g\in G,\ X \in \fkn.\]  Denote  the coadjoint orbit of $\lambda \in \fkn^*$ by $\lambda ^G$.    
The \emph{Kirillov function} $\psi_\lambda$ is then the map  $G \to \QQ(\zeta_p)$ defined by 
\be\label{kirillov-def} \psi_\lambda = \frac{1}{\sqrt{|\lambda^G|}} \sum_{\mu \in \lambda^G} \theta_\mu.\ee The size of $\lambda^G$ is a power of $q$ to an even integer \cite[Lemma 4.4]{DI} and so $\psi_\lambda(1) = \sqrt{|\lambda^G|}$ is a nonnegative integer power of $q$. 
We have $\psi_\lambda = \psi_\mu$ if and only if $\mu \in \lambda^G$, and the distinct Kirillov functions on $G$ form an orthonormal basis (with respect to $\langle\cdot,\cdot\rangle_G$) for the class functions on the group.  Kirillov functions may fail to be characters, however, and one purpose of this work is to demonstrate how one can use the results in \cite{supp0} to prove this failure directly.

Our definition of a Kirillov function attempts to attach a coadjoint orbit in $\fkn^*$ to an irreducible character of $G$ 
 by way of the bijection $\fkn \to G$ given by $X \mapsto 1+X$.  This is Kirillov's orbit method in the context of finite groups as described in \cite{K}.  In practice, one is more successful in developing a correspondence between coadjoint orbits and irreducible characters if a different bijection $\fkn \to G$ is used.
 Specifically, 
let $\tobedecided \exp : \fkn \to G$ be the truncated exponential map \[\ba & \barr{l}\tobedecided \exp (X) = 1 + X + \frac{1}{2}X^2 + \frac{1}{6}X^3 + \dots + \frac{1}{(p-1)!}X^{p-1}. % =1 + \sum_{n=1}^{p-1}  X^n/n!
\earr\ea\]  This map is always a bijection (as is any polynomial map with constant term one), so we may define the \emph{exponential Kirillov function} $\logpsi_\lambda : G\to \QQ(\zeta_p)$ by 
\[ \logpsi_\lambda\(\exp(X)\) = \psi_\lambda(1+X),\qquad\text{for }X \in \fkn.\]
Exponential Kirillov functions have all the same properties are ordinary Kirillov functions; in particular,  they also form an orthonormal basis for the class functions on $G$ and coincide with ordinary Kirillov functions in characteristic two.  They are more often irreducible, however.  In particular, if $p$ is the characteristic of $\FF_q$ then
\begin{enumerate}
\item[(1)] $\Irr(G) =\left \{ \logpsi_\lambda : \lambda \in \fkn^*\right\}$ if $\fkn^p=0$ \cite[Corollary 3]{Sangroniz}.
\item[(2)] More strongly, $\Irr(\UT_n(q)) = \left\{ \logpsi_\lambda : \lambda \in \fkt_n(q)^*\right\}$ if $n < 2p$ \cite[Corollary 12]{Sangroniz}.  
\end{enumerate}
By contrast, $\Irr(\UT_n(q)) =\left \{ \psi_\lambda : \lambda \in \fkt_n(q)^*\right\}$ if and only if $n \leq 12$ \cite[Theorem 4.1]{supp0}.  As mentioned in the introduction, the bound $n<2p$ in (2) seems unlikely to be optimal.  In Section \ref{constructions} we will be able to identify exponential Kirillov functions which are not characters when $n>6p$; 
 our construction seems ``morally'' like the simplest possible, and this motivates the following conjecture.

\begin{conjecture}
$\Irr(\UT_n(q)) = \left\{ \logpsi_\lambda : \lambda \in \fkt_n(q)^*\right\}$ if and only if $n \leq 6p$.
\end{conjecture}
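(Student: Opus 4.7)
The plan is to prove the two directions of this biconditional separately, as each requires rather different techniques.

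For the ``only if'' direction, the strategy is constructive. The goal is to exhibit, for every $n > 6p$, an explicit $\lambda \in \fkt_n(q)^*$ for which $\logpsi_\lambda$ fails to be a character. The natural candidate is the exponential Kirillov function of degree $q^{5p^2-p}$ promised in the introduction, obtained by setting $r = p$ in the theorem and then translating through $\exp$. Concretely, I would first use the construction of \cite{supp0}, applied to $\fkt_n(q)$, to identify the supercharacter that decomposes into the non-$\QQ(\zeta_p)$-valued irreducibles of degree $q^{5p^2 - 2p}$ from the main theorem. Then, using the bijection between coadjoint orbits and the basis of class functions provided by $\{\logpsi_\lambda\}$, I would locate a coadjoint orbit whose associated $\logpsi_\lambda$ has the same size but is forced, by Galois-theoretic reasons, to differ from the genuine irreducible character. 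Since $\logpsi_\lambda$ is necessarily $\QQ(\zeta_p)$-valued while the irreducible it would have to equal is not, this $\logpsi_\lambda$ cannot lie in $\Irr(\UT_n(q))$. Working out the explicit orbit representative and verifying that $\langle \logpsi_\lambda, \logpsi_\lambda \rangle = 1$ while $\langle \logpsi_\lambda, \chi \rangle \notin \ZZ_{\geq 0}$ for the relevant irreducible $\chi$ will require delicate bookkeeping with the truncated exponential, but is essentially a computation parallel to the main theorem.

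For the ``if'' direction, the plan is to extend Sangroniz's strategy (which covers $n < 2p$) in stages. The bound $n \leq 3p$ should follow from the suggested combination: apply Lemmas 9--11 of \cite{Sangroniz}, which analyze how truncated-exponential images of nilpotent matrices of small class behave under conjugation, together with Theorem \ref{structural} and Lemma \ref{monomial} of the present paper to reduce every irreducible character to a monomial form supported on a well-understood algebra subgroup $H = 1 + \h$ where $\h^p = 0$ holds after a controlled quotient. One then checks that the resulting induction $\Ind_H^G \theta_\lambda$ matches an exponential Kirillov function on the nose, because for $n \leq 3p$ the relevant Campbell--Hausdorff series truncates. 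To push the bound up to $6p$, I would iterate this reduction, using that the construction of \cite{supp0} allows one to peel off rows/columns in groups of $p$ while staying inside the algebra-subgroup framework, so that at each stage the residual nilpotent algebra has class bounded enough for Sangroniz's lemmas to apply.

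The main obstacle, by far, is the ``if'' direction past $n = 3p$. The difficulty is that for $3p < n \leq 6p$ the truncated exponential no longer commutes with the Campbell--Hausdorff expansion on the relevant subquotients, so one cannot directly compare $\psi_\lambda(1+X)$ with $\logpsi_\lambda(\exp(X))$ via a term-by-term identification. Overcoming this will require a more global argument, perhaps a counting argument matching the total number of exponential Kirillov functions (orbits under the coadjoint action, conjugated through $\exp$) against $|\Irr(\UT_n(q))|$, together with a uniform bound on the cyclotomic field generated by character values for $n \leq 6p$, which as the paper notes is itself not currently proved. I expect any complete proof will need a new input of this form, rather than a pure refinement of the inductive method, and this is why the conjecture remains open.
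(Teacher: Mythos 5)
The statement you set out to prove is explicitly a \emph{conjecture} in the paper and is not proved there. Only the ``only if'' implication is established: Proposition \ref{main-prop}(3), specialized to $r = p$, exhibits a $\lambda \in \fkt_n(q)^*$ (given by (\ref{lambda})) for which $\logpsi_\lambda$ is not a character when $n > 6p$. The paper's mechanism for that half is not Galois-theoretic. It is a reduction: Theorem \ref{structural}(3) transfers ``is $\logpsi_\lambda$ a character of $\UT_n(q)$?'' to the same question for the restriction $\logpsi_\mu$ on the smaller algebra group $\ols_\lambda$, and Observation \ref{inflation} together with the structure of $\olfks_\lambda$ worked out in Lemma \ref{technical} collapses this to the abelian algebra group $\mathrm{A}_{p+1}(q)$, where Proposition \ref{cmplx-constits}(3) checks directly that $\logpsi_\kappa$ fails to be a group homomorphism.

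Your proposed route for the ``only if'' direction has a concrete gap. You would argue that $\logpsi_\lambda$ is $\QQ(\zeta_p)$-valued while ``the irreducible it would have to equal'' is not, but you never pin down \emph{which} irreducible $\logpsi_\lambda$ would have to equal if it were a character. Even if one knew that $\logpsi_\lambda$, were it a character, had to be a constituent of $\xi_\lambda$ (itself a claim requiring proof), Proposition \ref{cmplx-constits}(1) only shows that \emph{some} of the $q^{p-1}$ constituents of $\xi_\lambda$ take values outside $\QQ(\zeta_p)$; it does not exclude the possibility that one of the remaining $\QQ(\zeta_p)$-valued constituents coincides with $\logpsi_\lambda$. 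The paper's homomorphism check on $\mathrm{A}_{p+1}(q)$ bypasses this obstacle entirely. As for the ``if'' direction, you correctly note it is open, but your claim that $n \leq 3p$ ``should follow'' from Lemmas 9--11 of \cite{Sangroniz} plus Theorem \ref{structural} and Lemma \ref{monomial} overstates the paper's own guarded wording (``computer experiments suggest that one might be able''); that portion of the conjecture has no known proof, and your plan for $3p < n \leq 6p$ is, as you yourself say, speculative. In short, there is no proof of the biconditional to compare against, only half is provable by the paper's methods, and your proposal for that half substitutes a Galois argument that, as written, does not close.
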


\subsection{Supercharacters $\chi_\lambda$}

While Kirillov functions provide an accessible orthonormal basis for the class functions of an algebra group,  supercharacters  alternatively provide an accessible family of orthogonal characters.  
Andr\'e \cite{Andre1} first defined these characters in the special case $G=\UT_n(q)$ as a practical substitute for the group's unknown irreducible characters.  Several years later, Yan \cite{Yan} showed how one could replace Andr\'e's definition with a more elementary construction, which Diaconis and Isaacs \cite{DI} subsequently generalized to algebra groups.

We define the supercharacters of $G=1+\fkn$ in a way analogous to Kirillov functions, but using left and right actions of $G$ on $\fkn^*$ in place of the coadjoint action.
In detail, the group $G$ acts on the left and right on $\fkn$  by multiplication, and on 
 $\fkn^*$ by $(g , \lambda) \mapsto g\lambda$ and $( \lambda,g) \mapsto \lambda g$ where we define
 \[ g\lambda(X) = \lambda(g^{-1}X)\qquad\text{and}\qquad \lambda g(X) = \lambda(Xg^{-1}),\qquad\text{for }\lambda \in \fkn^*,\ g \in G,\ X \in \fkn.\]
 These actions commute, in the sense that $(g\lambda) h = g(\lambda h)$ for $g,h \in G$, so there is no ambiguity in removing all parentheses and writing expressions like $g\lambda h$.   We denote the left, right, and two-sided orbits of $\lambda \in \fkn^*$ by  $G\lambda$, $\lambda G$, and $G\lambda G$ .
Notably, $G\lambda$ and $\lambda G$ have the same cardinality and $|G\lambda G| = \frac{|G\lambda||\lambda G|}{|G\lambda \cap \lambda G|}$ \cite[Lemmas 3.1 and 4.2]{DI}.
 
The \emph{supercharacter} $\chi_\lambda$ is then the function  $G \to \QQ(\zeta_p)$ defined by 
\be\label{superchar-def} 
 \chi_\lambda = \frac{|G\lambda|}{|G\lambda G|} \sum_{\mu \in G \lambda G} \theta_\mu.
\ee 
Supercharacters are always characers but often reducible.  We have $\chi_\lambda = \chi_\mu$ if and only if $\mu \in G\lambda G$, and every irreducible character of $G$ appears as a constituent of a unique supercharacter.  The orthogonality of the functions $\theta_\mu$ implies that \[\langle \chi_\lambda,\chi_\mu\rangle_G = \left\{\barr{ll} |G\lambda \cap \lambda G|, &\text{if }\mu \in G \lambda G , \\
0,&\text{otherwise,}\earr\right.
\qquad\text{for }\lambda,\mu \in \fkn^*.\] 
Furthermore, $\frac{|G\lambda|}{|G\lambda \cap \lambda G|} \chi_\lambda$ is the character of a two-sided ideal in $\CC G$.

The supercharacters of an algebra group rarely give us all irreducible charactrs.  For example, 
every irreducible character of $\UT_n(q)$ is a supercharacter only when $n\leq 3$.  Supercharacters nevertheless provide a useful starting point for constructing $\Irr(G)$.
In addition,
there are many interesting connections between the supercharacters of $\UT_n(q)$ and the combinatorics of set partitions, which we will touch upon briefly in Section \ref{pattern}.

\subsection{Supercharacter constituents $\xi_\lambda$}
\label{xi}

\def\rad{\mathrm{rad}}
\def\radl{\ker_{\mathrm{L}}}
\def\radr{\ker_{\mathrm{R}}}

Here we describe the character construction given in \cite{supp0}.  This is a process for decomposing supercharacters into smaller constituents $\xi_\lambda$, obtained by inducing linear characters of certain algebra subgroups.  The characters $\xi_\lambda$ will not give our ``exotic'' characters of $\UT_n(q)$ directly, but will possess in special cases a transparent decomposition into such characters.

As with Kirillov functions and supercharacters, the characters of present interest are indexed by elements of the dual space $\fkn^*$.  The relevant definition is somewhat more involved, however, and goes as follows.
  For each $\lambda \in \fkn^*$, define two sequences of subspaces $\fk l_\lambda^i, \fk s_\lambda^i \subset \fkn$ for $i\geq 0$ by the inductive formulas 
\[ \ba \fk l_\lambda ^0 &= 0, \\ %\\[-10pt]
\fk s_\lambda^0 &= \fkn, \ea \qquad \text{and}\qquad 
\ba \fk l_\lambda^{i+1} & = \left\{ X \in \fk s_\lambda^i : \lambda(XY) = 0 \text{ for all }Y \in \fk s_\lambda^i \right\}, \\ %\\[-10pt]
\fk s_\lambda^{i+1} & =\left \{ X \in \fk s_\lambda^i : \lambda(XY) = 0 \text{ for all }Y \in \fk l_\lambda^{i+1}\right \}. \ea\]
If $B_\lambda : \fkn\times \fkn \to \FF_q$ denotes the bilinear form $(X,Y) \mapsto \lambda(XY)$, then we may alternatively define the subspaces $\fk l_\lambda^i$, $\fk s_\lambda^i$ for $i>0$ by
\[ \ba \fk l_\lambda^i & =\text{the left kernel of the restriction of $B_\lambda$ to $\fk s_\lambda^{i-1}\times \fk s_{\lambda}^{i-1}$,} \\
\fk s_\lambda^i &=\text{the left kernel of the restriction of $B_\lambda$ to $\fk s_\lambda^{i-1} \times \fk l_\lambda^i$.}\ea\]

\begin{remark}
The degree  of the supercharacter $\chi_\lambda$ is 
\[ \chi_\lambda(1) = |G\lambda| =|\lambda G|= |\fkn| / |\fk l _\lambda^1|\qquad\text{and in addition}\qquad \langle \chi_\lambda,\chi_\lambda\rangle_G = |G\lambda \cap \lambda G| = |\fk s_\lambda^1| / |\fk l_\lambda^1|.\]  
\end{remark}We then have an ascending and descending chain of subspaces
\be\label{chain} 0  =\fk l_\lambda^0 \subset  \fk l_\lambda^1 \subset \fk l_\lambda^2 \subset \cdots\subset \fk s_\lambda^2 \subset \fk s_\lambda^1  \subset \fk s_\lambda^0 =  \fkn\ee with the following properties:
\begin{enumerate}
\item[(a)] Each $\fk s_\lambda^{i+1}$ is a subalgebra of $\fk s_\lambda^i$.
\item[(b)] Each $\fk l_\lambda^{i+1}$ is a right ideal of $\fk s_\lambda^i$ and a two-sided ideal of $\fk s_\lambda^{i+1}$.
\item[(c)] If $\fk s_\lambda^{d-1} = \fk s_\lambda^{d}$ for some $d\geq 1$ then $\fk l_\lambda^{d+i} = \fk l_\lambda^{d}$ and $\fk s_\lambda^{d+i} = \fk s_\lambda^d$ for all $i\geq 0$.
\end{enumerate}

Let $d\geq 1$ be an integer such that (c) holds$-$by dimensional considerations, some such $d$ exists$-$and define the subalgebras $\olfkl_\lambda,\olfks_\lambda\subset\fkn$ and algebra subgroups $\oll_\lambda,\ols_\lambda\subset G$ by \[  
\olfkl_\lambda = \fk l_\lambda^{d},
\qquad
 \olfks_\lambda = \fk s_\lambda^{d}\qquad\text{and}
 \qquad 
 \oll_\lambda = 1+\olfkl_\lambda,
 \qquad
 \ols_\lambda = 1+\olfks_\lambda.\] %where $n\geq 1$ is any integer for which (c) holds.  Some such $n$ exists by dimensional considerations, and  
Observe that $\olfkl_\lambda$ and $\olfks_\lambda$ are just the terminal elements in the ascending and descending chains (\ref{chain}).

The function $\theta_\lambda: G \to \CC$ restricts to a linear character of $\oll_\lambda$ and we define the character $\xi_\lambda$ of $G$ by
\[\xi_\lambda = \Ind_{\oll_\lambda}^G(\theta_\lambda).\]
This character is a possibly reducible constituent  with  degree $|G|/|\oll_\lambda|$  of the supercharacter $\chi_\lambda$ of $G$.  Notably, if $\ols_\lambda =G$ then $\xi_\lambda = \chi_\lambda$.  To give these functions a formula, define
\[ \Xi_\lambda =\left\{ g\lambda sg^{-1} : g \in G,\ s \in \ols_\lambda\right\} \subset \fkn^*.\]  
One can show that  $\Xi_\mu = \Xi_\lambda$ if $\mu \in \Xi_\lambda$, and so the sets $\Xi_\lambda$ thus partition $\fkn^*$ into unions of coadjoint orbits.  The following result combines Corollary 3.1, Proposition 3.1, Theorem 3.3, Corollary 4.1, and Proposition 4.2 in \cite{supp0} and enumerates the properties of the characters $\xi_\lambda$ which will be of use in our applications.

\begin{theorem}\label{structural}
Let $\fkn$ be a finite-dimensional nilpotent $\FF_q$-algebra and write $G=1+\fkn$. If $\lambda,\mu \in \fkn^*$, then 
\begin{enumerate}
\item[(1)]  
$\displaystyle \xi_\lambda =\frac{|\ols_\lambda|}{\left|G\right|} \sum_{\nu \in \Xi_\lambda} \theta_\nu % = \frac{|\ols_\lambda|}{\left|G\right|} \sum_\nu \psi_\nu(1) \psi_\nu
$ and $\displaystyle |\Xi_\lambda| = \frac{|G|^2}{| \oll_\lambda|| \ols_\lambda|}. $ %where the sum is over a set of representatives $\nu \in \fkn^*$ of the distinct coadjoint orbits in $\Xi_\lambda$.  %Also, $|\Xi_\lambda| = \frac{|G|^2}{| \oll_\lambda|| \ols_\lambda|}.$  %, and $c_\nu = \sqrt{\left| (\nu \downarrow \olfks_\lambda)^{\ols_\lambda}\right|}$.
\item[(2)] $\xi_\lambda = \xi_{\mu}$ if and only if $\mu \in \Xi_\lambda$, and if $\mu \notin \Xi_\lambda$ then $\xi_\lambda$ and $\xi_{\mu}$ share no irreducible constituents.   In particular, 
\[ \left\langle \xi_\lambda, \xi_{\mu} \right\rangle_G = \left\{\ba & |\ols_\lambda| / |\oll_\lambda|,&&\text{if }\mu \in \Xi_\lambda, \\&
0,&&\text{otherwise},\ea\right.\] and $\xi_\lambda$ is irreducible if and only if $\oll_\lambda =\ols_\lambda$, in which case $\xi_\lambda = \psi_\lambda = \logpsi_\lambda$.  

\item[(3)] The irreducible constituents of the characters $\{ \xi_\lambda : \lambda \in \fkn^*\}$ partition $\Irr(G)$, and the map \[ \barr{ccc} \Irr\(\ols_\lambda,\Ind_{\oll_\lambda}^{\ols_\lambda}(\theta_\lambda)\) & \to & \Irr(G,\xi_\lambda) \\ \psi & \mapsto & \Ind_{\ol S_\lambda}^{G}(\psi) \earr\] is a bijection.   Furthermore, if $\mu \in ( \olfks_\lambda )^*$ is given by restricting $\lambda$ to $\olfks_\lambda$, then the Kirillov function $\psi_\mu$ (respectively, $\logpsi_\mu$) is a character of $\ols_\lambda$ if and only if $\psi_\lambda$ (respectively, $\logpsi_\lambda$) is a character of $G$.  

\item[(4)] If $(\olfks_\lambda)^p \subset \olfkl_\lambda \cap \ker \lambda$  where $p>0$ is the characteristic of $\FF_q$, then $\logpsi_\lambda \in \Irr(G)$.

\end{enumerate}
\end{theorem}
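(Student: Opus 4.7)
The theorem consists of four linked claims about $\xi_\lambda = \Ind_{\oll_\lambda}^G(\theta_\lambda)$; the plan is to prove (1) first, from which (2) and (3) follow by routine character-theoretic bookkeeping, and then to deduce (4) via a reduction to Sangroniz's criterion.

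The backbone of the argument is the fact that $\{\theta_\nu : \nu \in \fkn^*\}$ is an orthonormal basis for $\CC$-valued functions on $G$, so (1) reduces to computing the coefficients $c_\nu = \langle \xi_\lambda, \theta_\nu\rangle_G$ in the expansion $\xi_\lambda = \sum_\nu c_\nu \theta_\nu$. By Frobenius reciprocity, applied after restricting $\theta_\nu$ to the abelian subgroup $\oll_\lambda$,
\[ c_\nu = \frac{1}{|G|}\,\#\bigl\{x \in G : \nu^{x^{-1}}\big|_{\olfkl_\lambda} = \lambda\big|_{\olfkl_\lambda}\bigr\}. \]
The main obstacle is to show that this count equals $|\ols_\lambda|$ exactly when $\nu \in \Xi_\lambda$ and vanishes otherwise. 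The plan is to leverage properties (a)--(c), which together imply that $B_\lambda$ descends to a \emph{nondegenerate} pairing on $\olfks_\lambda/\olfkl_\lambda$; this nondegeneracy identifies the affine space of functionals agreeing with $\lambda$ on $\olfkl_\lambda$ with the set $\{\lambda s : s \in \ols_\lambda\}$, after which an orbit-counting argument for the outer $G$-conjugation pins down $\Xi_\lambda$ with the uniform multiplicity $|\ols_\lambda|$. The cardinality $|\Xi_\lambda| = |G|^2/(|\oll_\lambda||\ols_\lambda|)$ then falls out of the same orbit-stabilizer computation, or alternatively by evaluating both sides of the resulting formula at the identity.

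Given (1), part (2) is essentially bookkeeping: orthonormality of $\{\theta_\nu\}$ yields $\langle \xi_\lambda, \xi_\mu\rangle_G = \tfrac{|\ols_\lambda||\ols_\mu|}{|G|^2}\,|\Xi_\lambda \cap \Xi_\mu|$, and the sets $\Xi_\lambda$ are easily shown to be either equal or disjoint since the construction of $\olfkl_\lambda, \olfks_\lambda$ is intrinsic to $\lambda$ and equivariant under the two-sided $G$-action. The irreducibility criterion and the identification $\xi_\lambda = \psi_\lambda$ when $\oll_\lambda = \ols_\lambda$ follow by direct comparison with (\ref{kirillov-def}), since in that case $\Xi_\lambda$ collapses to the coadjoint orbit $\lambda^G$ and $|\ols_\lambda|/|G| = 1/\sqrt{|\lambda^G|}$. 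The Clifford-type bijection in (3) is then a standard induction-in-stages argument through the intermediate group $\ols_\lambda$, once one observes that (b) makes $\oll_\lambda$ normal in $\ols_\lambda$ with $\theta_\lambda$ an $\ols_\lambda$-invariant linear character, and that the $G$-conjugates of constituents of $\Ind_{\oll_\lambda}^{\ols_\lambda}(\theta_\lambda)$ remain inside this induced character because $\Xi_\lambda$ is $G$-stable.

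Finally, (4) reduces to Sangroniz's theorem (item (1) preceding Conjecture 2.1): by the Kirillov-function version of (3), $\logpsi_\lambda \in \Irr(G)$ if and only if $\logpsi_\mu \in \Irr(\ols_\lambda)$ for $\mu = \lambda|_{\olfks_\lambda}$, and the hypothesis $(\olfks_\lambda)^p \subset \olfkl_\lambda \cap \ker\lambda$ means $\mu$ descends to a functional on the quotient algebra $\olfks_\lambda/(\olfkl_\lambda \cap \ker\mu)$, whose $p$th power vanishes. Sangroniz's result then directly gives irreducibility in this quotient, and this lifts back to $\logpsi_\mu \in \Irr(\ols_\lambda)$, hence $\logpsi_\lambda \in \Irr(G)$.
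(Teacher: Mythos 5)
The paper does not actually prove Theorem~\ref{structural}; it is stated as a compilation of ``Corollary 3.1, Proposition 3.1, Theorem 3.3, Corollary 4.1, and Proposition 4.2 in \cite{supp0}'' and defers all proofs to that preprint. So there is no in-paper argument to compare against, and your sketch is an independent attempt at reconstructing several nontrivial results from a separate reference.

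On the merits, your high-level outline for (1) is the right one: Frobenius reciprocity does give $c_\nu = \frac{1}{|G|}\#\{x \in G : \nu^{x^{-1}}|_{\olfkl_\lambda} = \lambda|_{\olfkl_\lambda}\}$, and the remaining work is an orbit count. But the key step as you state it does not hold: the set of functionals on $\fkn$ agreeing with $\lambda$ on $\olfkl_\lambda$ has cardinality $|\fkn|/|\olfkl_\lambda|$, which is in general strictly larger than $|\lambda\ols_\lambda|$, so the ``identification'' with $\{\lambda s : s \in \ols_\lambda\}$ cannot be right as written. Even if you intend functionals on $\olfks_\lambda$ only, you still need to identify the right kernel of $B_\lambda$ on $\olfks_\lambda\times\olfks_\lambda$, which is not obviously $\olfkl_\lambda$ --- the recursion only names left kernels, and $B_\lambda$ is not symmetric --- and then relate the stabilizer of $\lambda|_{\olfkl_\lambda}$ under coadjoint conjugation to $\ols_\lambda$ itself. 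Those two links are exactly where the content of (1) lives, and the sketch does not supply them.

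The more serious gap is (3). You call it a ``Clifford-type bijection,'' but $\ols_\lambda$ is in general \emph{not} normal in $G$, so the Clifford correspondence cannot be invoked; the irreducibility of $\Ind_{\ols_\lambda}^G(\psi)$ for each constituent $\psi$ of $\Ind_{\oll_\lambda}^{\ols_\lambda}(\theta_\lambda)$ requires a separate argument. A workable route is: from (1)--(2), $\langle \xi_\lambda,\xi_\lambda\rangle_G = |\ols_\lambda:\oll_\lambda|$; applying the same formula inside the algebra group $\ols_\lambda$ to the restriction $\mu=\lambda|_{\olfks_\lambda}$ (whose chain stabilizes immediately at $\oll_\mu=\oll_\lambda$, $\ols_\mu=\ols_\lambda$) gives $\langle \Ind_{\oll_\lambda}^{\ols_\lambda}\theta_\lambda, \Ind_{\oll_\lambda}^{\ols_\lambda}\theta_\lambda\rangle_{\ols_\lambda}=|\ols_\lambda:\oll_\lambda|$ as well; then the inequality $\langle\Ind_{\ols_\lambda}^G\psi_i,\Ind_{\ols_\lambda}^G\psi_j\rangle_G\geq\delta_{ij}$ coming from Frobenius reciprocity, together with the matching inner-product totals, forces $\Ind_{\ols_\lambda}^G$ to be an isometry on those constituents. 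Your sketch shows no awareness that this extra step is needed, and without it both the bijection in (3) and the claimed equivalence for Kirillov functions are unsupported. Part (4) is also thinner than it looks: Sangroniz's criterion needs $\fkn^p=0$, so one must pass to a quotient of $\olfks_\lambda$ by a two-sided \emph{ideal} $\h$ with $(\olfks_\lambda)^p\subset\h\subset\ker\mu$, and checking that the ideal generated by $(\olfks_\lambda)^p$ lies inside $\ker\mu$ uses more than the hypothesis as written; it requires knowing something about $\lambda(\olfks_\lambda\cdot\olfkl_\lambda)$, again touching the right-kernel issue you glossed over in (1).
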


\def\Gal{\mathrm{Gal}}

\begin{remark}
 Let $p>0$ be the characteristic of $\FF_q$ and suppose $r$ is a power of $p$ for which  $\QQ(\zeta_r)$ is a splitting field for $G$. % for some $p$-power $r=p^i$. 
 If $\alpha \in \Gal\(\QQ(\zeta_r) / \QQ(\zeta_p)\)$, % is an automorphism whose fixed field contains $\QQ(\zeta_p)$, 
then clearly $\alpha\circ \xi_\lambda = \xi_\lambda$.  Therefore, if $\chi $ is an irreducible constituent of $\xi_\lambda$ then $\alpha \circ \chi$ is as well.   In light of part (3) of the preceding statement, it follows that %if $\psi$ is an irreducible constituent of $\Ind_{\oll_\lambda}^{\ols_\lambda}(\theta_\lambda)$, then 
\[\label{rmk} \alpha \circ \psi = \psi\quad\text{if and only if}\quad \alpha \circ \Ind_{\ols_\lambda}^G(\psi) =  \Ind_{\ols_\lambda}^G(\psi),\qquad\text{for }\psi \in  \Irr\(\ols_\lambda,\Ind_{\oll_\lambda}^{\ols_\lambda}(\theta_\lambda)\) .\]
Now, for each $1\leq i \leq \log_p(r)$ there exists $\alpha \in \Gal\(\QQ(\zeta_r) / \QQ(\zeta_p)\)$ whose fixed field is $\QQ(\zeta_{p^i})$; namely, one can take $\alpha$ to be the unique automorphism with $\alpha(\zeta_r) = \zeta_r^{1+p^i}$.  Consequently, if $\psi$ takes values outside the field $\QQ(\zeta_{p^i})$, so that $\alpha \circ\psi \neq \psi$, then the same is true of the irreducible constituent $\Ind_{\ols_\lambda}^G(\psi)$ of $\xi_\lambda$.
\end{remark}

Here is an easy example of how one can use the constructions in this section to directly decompose a supercharacter.

\begin{example}
Consider the supercharacter $\chi_\lambda$  of $\UT_6(q)$ indexed by 
\[\lambda = e_{13}^* + e_{24}^* + e_{35}^* + e_{36}^* \in \fkt_6(q)^*.\]
It is an instructive exercise to compute 
\[ \ba 
\fk l_\lambda^1 &= \{ X \in \fkt_6(q) : X_{12} = X_{23} = X_{34} = X_{45}  =0\}, \\
\fk l^2_\lambda & = \{ X \in \fkt_6(q) : X_{12} = X_{23} = X_{45} = 0\}, \\
\fk l_\lambda^3 &= \fk s_\lambda^2\ea
\qquad
\ba
\fk s_\lambda^1 &= \{ X \in \fkt_6(q) : X_{45} = 0\}, \\
\fk s^2_\lambda & = \{ X \in \fkt_6(q): X_{23} = X_{45} = 0\}, \\
\fk s^3_\lambda&= \fk s^2_\lambda,
\ea\] so $\chi_\lambda$ has degree $q^4$ and   $\xi_\lambda$ is irreducible with degree $q^2$ since $\oll_\lambda = \ols_\lambda$.

The elements in the two-sided $\UT_6(q)$-orbit of $\lambda$ are those of the form $\mu = \lambda + \sum_{i} a_i e_{i,i+1}^*$, so $\mu(XY) = \lambda(XY)$ for all $X,Y \in \fkt_6(q)$.  This means by definition that $\oll_\lambda = \oll_\mu = \ols_\lambda = \ols_\mu$ and hence that each $\xi_\mu$ is irreducible with degree $q^2$.  As $\chi_\lambda$ is a linear combination of such characters $\xi_\mu$,  it follows that every irreducible constituent of $\chi_\lambda$ has degree $q^2$.  The irreducible constituents of a supercharacter which have the same degree also have the same multiplicity, since a constant times $\chi_\lambda$ is the character of a two-sided ideal in the group algebra of $\UT_n(q)$.  The constant in general is $|\fkn|/|\fk s_\lambda^1|$ and is in this case $q$, so since $\chi_\lambda(1) = q^4$ it follows that $\chi_\lambda$ decomposes as a sum of $q$ distinct irreducible characters of degree $q^2$, each appearing with multiplicity $q$.
\end{example}

\subsection{Inflation from quotients by algebra subgroups}\label{infl-sec}

\def\q{\mathfrak{q}}

We will make use in the next sections of the following result concerning the effect of inflation on the functions $\psi_\lambda$, $\logpsi_\lambda$, $\chi_\lambda$, $\xi_\lambda$.  
Suppose $\fkn$ is a nilpotent $\FF_q$-algebra with a two-sided ideal $\h$.  Let $\q=\fkn /\h$ be the quotient algebra, and write $G = 1+\fkn$ and $Q = 1+\q$.  Also, let $\wt \pi : \fkn \to \q$ be the quotient map, and define $\pi :G\to Q$ by $\pi(1+X) = 1+\wt\pi(X)$; both of these are surjective homomorphisms, of algebras and groups, respectively.
The following is proved as Observation 4.1 in \cite{supp0}.

\begin{observation} \label{inflation}  If $\lambda \in \fkn^*$ has $\ker \lambda \supset \h$, then there exists a unique $\mu \in \q^*$ with $\lambda = \mu\circ \wt\pi$, and %if $\cF$ is any polynomial map of the form (\ref{polynomial}) then
\[\psi_\lambda = \psi_{\mu} \circ \pi,
\qquad
\logpsi_\lambda = \logpsi_{\mu} \circ \pi,
\qquad 
\chi_\lambda = \chi_{\mu } \circ\pi,
\qquad\text{and}
\qquad 
\xi_\lambda = \xi_{\mu} \circ \pi.\] Furthermore, $\oll_\lambda = \pi^{-1}(\oll_{\mu})$ and $\ols_\lambda = \pi^{-1}(\ols_\mu) $.
\end{observation}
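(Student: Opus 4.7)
The plan is to verify the stated identities in the order they are listed, exploiting throughout the fact that $\wt\pi$ is a surjective algebra homomorphism with kernel $\h$, and that $\pi$ is a surjective group homomorphism with kernel $N := 1+\h$. Existence and uniqueness of $\mu\in\q^*$ with $\lambda=\mu\circ\wt\pi$ is immediate from the universal property of the quotient, since $\ker\lambda\supset\h$. From here the identity $\theta_\lambda = \theta_\mu\circ\pi$ drops out of the definition: $\theta_\lambda(g) = \theta(\lambda(g-1)) = \theta(\mu(\wt\pi(g-1))) = \theta_\mu(\pi(g))$.

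For the Kirillov function and supercharacter identities, the crucial observation is that the subset $V := \{\nu\in\fkn^* : \ker\nu\supset\h\}$ is preserved by the left, right, and coadjoint actions of $G$: this is because $\h$ is a two-sided ideal, so for $\nu\in V$, $g\in G$, and $X\in\h$, each of $g^{-1}X$, $Xg^{-1}$, and $gXg^{-1}$ lies in $\h$. The map $\nu\mapsto\nu\circ\wt\pi$ identifies $\q^*$ with $V$ equivariantly with respect to $\pi$, so the orbits $\lambda^G$, $G\lambda$, $\lambda G$, $G\lambda G$ are in bijection with their counterparts $\mu^Q$, $Q\mu$, $\mu Q$, $Q\mu Q$ via $\wt\pi^*$. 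Substituting $\theta_\nu = \theta_{\wt\pi^*(\nu)}\circ\pi$ into the defining sums \eqref{kirillov-def} and \eqref{superchar-def} then gives $\psi_\lambda = \psi_\mu\circ\pi$ and $\chi_\lambda = \chi_\mu\circ\pi$. The identity for $\logpsi_\lambda$ follows from the one for $\psi_\lambda$ together with the intertwining $\pi\circ\exp = \exp\circ\wt\pi$, which holds because $\wt\pi$ is an algebra homomorphism and $\exp$ is a polynomial without constant term (applied to $1+X$).

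The claims $\oll_\lambda=\pi^{-1}(\oll_\mu)$ and $\ols_\lambda=\pi^{-1}(\ols_\mu)$ reduce to showing $\fk l_\lambda^i = \wt\pi^{-1}(\fk l_\mu^i)$ and $\fk s_\lambda^i = \wt\pi^{-1}(\fk s_\mu^i)$ for all $i\geq 1$, which I would establish by simultaneous induction on $i$. The base case $i=1$ is direct: the surjectivity of $\wt\pi$ and the identity $\lambda(XY) = \mu(\wt\pi(X)\wt\pi(Y))$ show that $X$ lies in the left radical of $B_\lambda$ on $\fkn\times\fkn$ if and only if $\wt\pi(X)$ lies in the left radical of $B_\mu$ on $\q\times\q$. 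The inductive step is the same calculation run on the subspaces $\fk s_\lambda^{i-1}\times \fk s_\lambda^{i-1}$ and $\fk s_\lambda^{i-1}\times\fk l_\lambda^i$, using the inductive hypothesis and the surjectivity of $\wt\pi\downarrow\fk s_\lambda^{i-1}\to\fk s_\mu^{i-1}$ (and likewise on $\fk l_\lambda^i$). Stabilizing at index $d$ gives the statements about $\oll_\lambda$ and $\ols_\lambda$, and in particular $\h\subset\olfkl_\lambda$, so $N\subset\oll_\lambda$.

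Finally, for $\xi_\lambda = \xi_\mu\circ\pi$, I would invoke the standard fact that induction commutes with inflation along a normal subgroup $N$ contained in the inducing subgroup: since $\theta_\lambda$ is trivial on $N=1+\h$ and $\oll_\lambda = \pi^{-1}(\oll_\mu)$, one has $\theta_\lambda = \theta_\mu\circ(\pi\downarrow\oll_\lambda)$, and the equality $\Ind_{\oll_\lambda}^G(\theta_\lambda) = \Ind_{\oll_\mu}^{Q}(\theta_\mu)\circ\pi$ follows from a direct manipulation of the Frobenius formula \eqref{frob}, grouping the sum over $x\in G$ with $xgx^{-1}\in\oll_\lambda$ into cosets of $N$. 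The main obstacle, if there is one, is the bookkeeping in the inductive step for the subspace chain; the rest is essentially a dictionary between the $G$- and $Q$-pictures, with surjectivity of $\wt\pi$ and $\pi$ doing all the work.
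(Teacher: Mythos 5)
Your proposal is correct, and the approach---factoring everything through the equivariant bijection $\nu\mapsto\nu\circ\wt\pi$ from $\q^*$ onto $\{\nu\in\fkn^*:\ker\nu\supset\h\}$, running a simultaneous induction to transport the subspace chains, and then applying the compatibility of induction with inflation---is the natural one and is surely what the cited proof does. The only detail worth making explicit in the inductive step is the base case $i=1$ (the statement $\fk l_\lambda^i=\wt\pi^{-1}(\fk l_\mu^i)$ fails at $i=0$ unless $\h=0$, which you correctly sidestep by starting the induction at $i=1$ and using $\fk s_\lambda^0=\wt\pi^{-1}(\fk s_\mu^0)$); after that the stabilization indices agree, which is what makes $\olfkl_\lambda=\wt\pi^{-1}(\olfkl_\mu)$ and $\olfks_\lambda=\wt\pi^{-1}(\olfks_\mu)$ legitimate.
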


\begin{remark} Since $\chi \mapsto \chi\circ \pi$ defines an injection $\Irr(Q)\to \Irr(G)$, by  \cite[Lemma 2.22]{Isaacs} for example, if $\psi_\mu$ or $\logpsi_\mu$ are characters in this setup then the same is true of $\psi_\lambda$ or $\logpsi_\lambda$, respectively.
\end{remark}

We will appeal to this observation most often in the special case when $\fkn$ has a vector space decomposition 
\be\label{above} \fkn = \fk a \oplus \h, \quad\text{where }\left\{\ba &\text{$\fk a$ is a subalgebra,} \\ &\text{$\h$ is a two-sided ideal.}\ea\right.\ee  Write $G = 1+\fkn$,  $A = 1+\fka$, and $H = 1+\h$.  In this situation, we may identify $Q = 1+\fkn /\h$ with the algebra subgroup $A$; Observation \ref{inflation} then holds where $\wt \pi :\fkn \to \fka$ and $\pi :G\to A$ are the projection maps 
\[ \wt \pi(a+h) = a\qquad\text{and}\qquad \pi(1+a+h)=1+a,\qquad\text{for }a\in \fka,\ h\in \h.\] 
Also, if $\lambda \in \fkn^*$ then the unique $\mu \in \fka^*$ with $\lambda = \mu\circ \wt \pi$ is given by the restriction $\mu = \lambda\downarrow \fka$.%, and $\pi^{-1}(\oll_\mu) = \oll_\lambda \cdot H$ 
%and
%$\pi^{-1}(\ols_\mu) = \ols_\lambda \cdot H$ 

%: and let $\pi : G \to A$ denote the natural projection defined by 
%\be \label{proj} \pi(1+a+h)= 1+a,\qquad\text{for $a \in \fka$ and $h \in \h$.}\ee
%Observe that $\pi$ is a surjective homomorphism, that $H\vartriangleleft G$ is a normal subgroup, and that $G = A\ltimes H$.  Moreover, the map \be\label{inf-map} \barr{ccc} \Irr(A) & \to & \{ \psi \in \Irr(G) : \ker \psi \supset A \} \\ \psi & \mapsto & \psi \circ \pi \earr\ee is a bijection.  We refer to this map as \emph{inflation.} 

%The last statement  is a standard fact from character theory.  In more detail, it is not difficult to see that $gH \mapsto \pi(g)$ gives a well-defined isomorphism $G/H \cong A$; consequently, composition with this map defines a bijection $\Irr(A) \to \Irr(G/H)$.  In turn, composition with the quotient map $g\mapsto gH$ defines a bijection from $\Irr(G/H)$ to the set of characters $\psi \in \Irr(G)$ with $\ker \psi \supset H$, by \cite[Lemma 2.22]{Isaacs} for example.  
%Combining the preceding two sentences gives the bijection (\ref{inf-map}).

%The following proposition codifes the unsurprising effect of the inflation map (\ref{inf-map}) on our functions of interest in this setup.

\def\id{\mathrm{id}}

%Algorithm . . .

\section{Applications to the unitriangular group $\UT_n(q)$}\label{appl}

In this section we construct characters of the unitriangular group which take values in $\QQ(\zeta_{p^i})$ for any $i\geq 0$.  Approaching this goal, we first establish some general results concerning pattern groups, an accessible family of pattern groups, and then work to identify characters with large-field values in a specific algebra group, which will turn out to be a quotient of $\olfks_\lambda$ for a certain $\lambda \in \fkt_n(q)^*$.  Following this preliminary work, we identify our characters of interest as irreducible constituents of  one of the characters $\xi_\lambda$ of $\UT_n(q)$.

\subsection{Pattern groups}\label{pattern}

A \emph{pattern algebra} is any subalgebra of $\fkt_n(q)$ spanned over $\FF_q$ by a set of elementary matrices $e_{ij}$, and a \emph{pattern group} is an algebra group corresponding to a pattern algebra.  Pattern groups provide the most accessible examples of algebra groups, and much can be said about their supercharacters and class functions; see, for example, \cite{DT, I, MT, M2, TV}.

Given a subset of positions in an upper triangular matrix $\cP \subset \{ (i,j) : 1\leq i< j \leq n\}$, define 
\be\label{form} \fkt_{n,\cP}(q) = \{ X \in \fkt_n(q) : X_{ij} = 0 \text{ if }(i,j) \notin \cP\}\qquad\text{and}\qquad \UT_{n,\cP}(q) = 1+\fkt_{n,\cP}(q).\ee  It is not difficult to show that $\fkt_{n,\cP}(q)$ is a subalgebra if and only if $\cP$ is \emph{closed}, by which we mean that $(i,j),(j,k) \in \cP$ implies $(i,k) \in \cP$.  Every  pattern algebra and pattern group is thus of the form (\ref{form}) for a closed set of positions $\cP$.  

Such closed sets $\cP$ are naturally in bijection with  partial orderings of $[n]$ which are subordinate to the standard linear ordering $1<2<\dots<n$.  Specifically, $\cP$ corresponds to the ordering $\prec$ defined by $i\prec j$ if and only if $(i,j) \in\cP$; the closed condition on $\cP$ corresponds to the the transitivity condition on $\prec$.   The group $\UT_n(q)$ is the pattern group corresponding to the set of positions $\cP = \{ (i,j) : 1\leq i<j\leq n\}$ and the standard  linear ordering of  $[n]$.

Recall that a matrix is monomial if it has exactly one nonzero entry in each row and column.  Following \cite{Sangroniz}, we define a matrix to be \emph{quasi-monomial} if it has at most one nonzero entry in each row and column.  If $\fkn = \fkt_{n,\cP}(q)$ is a pattern algebra, then there is a natural isomorphism $\fkn \cong \fkn^*$ given by associating $X \in \fkn$ to the map $Y \mapsto \tr(X^TY) = \sum_{(i,j) \in \cP} X_{ij} Y_{ij}$ in $\fkn^*$.  We say that $\lambda \in \fkn^*$ is \emph{quasi-monomial} if $\lambda$ corresponds to a quasi-monomial matrix in $\fkn$ under this isomorphism.  Equivalently, if we define 
\[ \lambda_{ij} = \left\{\ba & \lambda(e_{ij}),&&\text{if }(i,j) \in \cP, \\ & 0,&&\text{if }(i,j)\notin\cP,\ea\right.\]%\qquad\text{for }\lambda \in \fkn^*,\] 
then $\lambda$ is quasi-monomial if $\lambda_{ij}\neq 0$ for at most one position $(i,j) \in \cP$ in each row and column.

The following easy lemma will be of great use in the calcuations we undertake in Section \ref{constructions}.%, and also to prove a generalization of  \cite[Theorem 7.2]{AndreAdjoint}.

\def\cL{\leftexp{\perp}{\hspace{-0.5mm}\mathcal{L}}}
\def\cS{\leftexp{\perp}{\hspace{-0.5mm}\mathcal{S}}}

\begin{lemma}\label{monomial}
Suppose $\fkn = \fkt_{n,\cP}(q)$ is a pattern algebra and $\lambda \in \fkn^*$ is quasi-monomial.  If we define
\[ \ba \cL_\lambda &=\left \{ (i,j) \in \cP :\exists k\in [n]\text{ with }\lambda_{ik}\neq 0\text{ and }(j,k) \in \cP\right \}, \\
\cS_\lambda &= \left \{ (i,j) \in\cP : \exists k\in [n]\text{ with }\lambda_{ik}\neq 0\text{ and }(j,k) \in\cP\text{ and } (j,k) \notin  \cL_\lambda\right\}, \ea\] 
then $\lambda G = \lambda + \FF_q\spanning\left\{ e_{ij}^* : (i,j) \in \cL_\lambda\right\}$ and
%\[ \lambda G = \lambda + \FF_q\spanning\{ e_{ij}^* : (i,j)\in \cL^\perp_\lambda \}\] %and
for all $\mu \in \lambda G$ we have
\[  \fk l_\mu^1 = \left\{ X \in \fkt_{n,\cP}(q) : X_{ij}  =0\text{ if }(i,j) \in \cL_\lambda\right\}\quad\text{and}\quad
 \fk s_\mu^1 = \left\{ X \in \fkt_{n,\cP}(q) : X_{ij}  =0\text{ if }(i,j) \in \cS_\lambda\right\}.
\]  
\end{lemma}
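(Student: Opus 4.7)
The plan is to combine an infinitesimal calculation of the right $G$-orbit with a direct evaluation of the bilinear form $B_\lambda(X, Y) = \lambda(XY)$ under the quasi-monomial hypothesis, and then to transfer the resulting formulas from $\lambda$ to an arbitrary $\mu \in \lambda G$ using that $\fk l_\lambda^1$ is a right ideal of $\fkn$.

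For the orbit description, observe first that $\lambda g(X) - \lambda(X) = \lambda(XZ)$ where $Z := g^{-1} - 1 \in \fkn$, and that $g \mapsto g^{-1} - 1$ is a bijection $G \to \fkn$ (its inverse is $Z \mapsto (1+Z)^{-1}$). Thus $\lambda G - \lambda$ coincides with the image in $\fkn^*$ of the linear map $\phi : \fkn \to \fkn^*$ given by $\phi(Z)(X) = \lambda(XZ)$. Expanding $\phi(e_{jk}) = \sum_{i : (i,j) \in \cP} \lambda_{ik}\, e_{ij}^*$ and using that $\lambda$ is quasi-monomial (so for each $k$, at most one index $i = i_k$ has $\lambda_{ik} \neq 0$) collapses this to $\phi(e_{jk}) = \lambda_{i_k k}\, e_{i_k j}^*$, valid when $i_k$ exists and $(i_k, j) \in \cP$. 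The positions $(i_k, j)$ that arise as $(j, k)$ ranges over $\cP$ are exactly the elements of $\cL_\lambda$, so the image of $\phi$ equals $V := \FF_q\spanning\{e_{ij}^* : (i,j) \in \cL_\lambda\}$, giving $\lambda G = \lambda + V$.

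Next I would compute $\fk l_\lambda^1$ directly from its definition. The condition $\lambda(XY) = 0$ for all $Y \in \fkn$ is equivalent to $\lambda(Xe_{jk}) = 0$ for every $(j,k) \in \cP$, which the same quasi-monomial reduction translates into $X_{i_k j}\lambda_{i_k k} = 0$ whenever $i_k$ exists and $(i_k, j) \in \cP$; this is exactly the condition $X_{ij} = 0$ for all $(i,j) \in \cL_\lambda$. Restricting $Y$ to run over $\fk l_\lambda^1$---that is, over basis elements $e_{jk}$ with $(j,k) \in \cP \setminus \cL_\lambda$---and repeating the same calculation then yields the description of $\fk s_\lambda^1$ in terms of $\cS_\lambda$.

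Finally, to upgrade both formulas to an arbitrary $\mu = \lambda g \in \lambda G$, use $B_\mu(X, Y) = \lambda(XYg^{-1})$. Since $Y \mapsto Yg^{-1}$ is a bijection $\fkn \to \fkn$, this immediately yields $\fk l_\mu^1 = \fk l_\lambda^1$. For $\fk s_\mu^1$ the same substitution shows $\fk s_\mu^1 = \{X : \lambda(XZ) = 0 \text{ for all } Z \in \fk l_\lambda^1 g^{-1}\}$, and here property (b) of the chain, which says $\fk l_\lambda^1$ is a right ideal of $\fkn$, implies $\fk l_\lambda^1 g^{-1} = \fk l_\lambda^1$ (containment from the right ideal property, equality from invertibility of $g$), so this set equals $\fk s_\lambda^1$. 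The main obstacle is not analytic but combinatorial: tracking the index conditions $(i,j) \in \cP$, $\lambda_{ik} \neq 0$, $(j,k) \in \cP$ and verifying that they collapse correctly into the sets $\cL_\lambda$ and $\cS_\lambda$; once the quasi-monomial hypothesis pins down a unique row index $i_k$ per column $k$, the remainder is direct computation.
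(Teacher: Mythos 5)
Your proof is correct and uses the same central computation as the paper: evaluating $\lambda(Xe_{jk})$ and exploiting that a quasi-monomial $\lambda$ singles out at most one row index $i_k$ per column $k$, which collapses the kernel conditions into the sets $\cL_\lambda$ and $\cS_\lambda$. The difference is in how you handle the peripheral steps. Where the paper simply cites \cite[Lemma 4.2(c)]{DI} for the orbit description and \cite[Lemma 3.1]{supp0} for the invariance $\fk l_\mu^1 = \fk l_\lambda^1$, $\fk s_\mu^1 = \fk s_\lambda^1$, you supply self-contained arguments: the orbit computation via the observation that $\lambda g - \lambda = \phi(g^{-1}-1)$ for the linear map $\phi(Z) = \lambda(\cdot\, Z)$ (whose image you identify as the span of $\{e_{ij}^* : (i,j)\in\cL_\lambda\}$), and the transfer to $\mu = \lambda g$ via $\mu(XY) = \lambda(XYg^{-1})$ together with the bijectivity of right multiplication by $g^{-1}$ and the right-ideal property of $\fk l_\lambda^1$ (property (b) of the chain). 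Both routes are valid; yours is more self-contained at the cost of a little length, while the paper's is shorter but presupposes the cited lemmas. One small remark: your appeal to property (b) of the chain is legitimate since the paper lists it as an established fact before the lemma, but since you already have the explicit description of $\fk l_\lambda^1$ in hand at that point, you could alternatively verify the right-ideal property directly from closure of $\cP$ (if $(i,j)\notin\cL_\lambda$ and $(j,k)\in\cP$, then $(i,k)\notin\cL_\lambda$), keeping the argument entirely elementary.
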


\begin{remark}
Observe that if $\fkn = \fkt_n(q)$, so that $\cP = \{ (i,j) : i,j \in [n],\ i<j\}$, then $\cL_\lambda$ consists of all upper triangular positions strictly to the left of positions $(i,j)$ with $\lambda_{ij}\neq 0$, and \be\label{monomial-note} \cL_\lambda \setminus \cS_\lambda= \{ (i,j): \exists k,l \text{ such that } 1\leq i<j<k<l \leq n\text{ and }\lambda_{ik},\lambda_{jl}\neq 0\}.\ee
\end{remark} 

\begin{proof}
We have $\fk l^1_\mu = \fk l^1_\lambda$ and $\fk s_\mu^1 = \fk s_\lambda^1$ by \cite[Lemma 3.1]{supp0}.  Let $Y \in \fkt_{n,\cP}(q)$ and $(i,j) \in \cP$, so that $\lambda(e_{ij}Y)=\sum_{k \in [n]} \lambda_{ik} Y_{jk}$.  Since $\lambda$ is quasi-monomial, this is nonzero only if there exists $k \in [n]$ such that $\lambda_{ik} \neq 0$ and $Y_{jk} \neq 0$, in which case $(i,j) \in \cL_\lambda$.  It follows that 
\be\label{monomial-eq} \fk l_\lambda^1 \supset\left \{ X \in \fkt_{n,\cP}(q) : X_{ij}  =0\text{ if }(i,j) \in \cL_\lambda\right\}.\ee  On the other hand, if $X \in \FF_q\spanning \{ e_{ij} : (i,j) \in\cP\setminus \cL_\lambda\}$ then either $X= 0$ or $X_{ij} \neq 0$ for some $(i,j) \in \cP$ for which there exists $k \in [n]$ with $\lambda_{ij} \neq 0$ and $(j,k) \in \cP$.  In this case $\lambda(XY)=\lambda_{ik}Y_{ij} \neq 0$ for $Y= e_{jk} \in \fkt_{n,\cP}(q)$ since $\lambda$ is quasi-monomial and $XY$ has nonzero entries only in one column.  It follows that  (\ref{monomial-eq}) must be an equality.  The proof of our characterization of $\fk s_\mu^1 = \fk s_\lambda^1$ proceeds %from the definition in Section \ref{fk} 
by an almost identical argument, and our description of  $\lambda G$ is immediate from \cite[Lemma 4.2(c)]{DI}.
%
%Let $Y \in \fk l_\lambda$ and $(i,j) \in \cP$, so that again $\lambda(e_{ij}Y)=\sum_{k \in [n]} \lambda_{ik} Y_{jk}$.  Since $\lambda$ is quasi-monomial, this is nonzero only if there exists $k \in [n]$ such that $\lambda_{ik} \neq 0$ and $Y_{jk} \neq 0$, in which case $(i,j) \in \cS_\lambda$. 
\end{proof}

Our next result is a slight generalization of Theorem 7.2 in \cite{AndreAdjoint}.  It is noteworthy mostly for the almost trivial proof we can give using the preceding lemma.

\begin{theorem} If $\fkn = \fkt_{n,\cP}(q)$ is a pattern algebra and $\lambda \in \fkn^*$ is quasi-monomial then $\xi_\lambda = \psi_\lambda$; i.e., the Kirillov function $\psi_\lambda$ is a well-induced, irreducible character.
\end{theorem}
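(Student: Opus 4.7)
The plan is to use Theorem \ref{structural}(2), which reduces the identity $\xi_\lambda = \psi_\lambda$ (and the assertion that this common character is a well-induced irreducible) to the single statement $\olfkl_\lambda = \olfks_\lambda$. I will prove this equality by induction on $\dim \fkn$. The induction peels one layer off the chain (\ref{chain}) at a time: in the nontrivial case I replace $\fkn$ by the proper pattern subalgebra $\fk s_\lambda^1$ and $\lambda$ by its restriction, then invoke the inductive hypothesis on the smaller pair.

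The base case is $\cL_\lambda = \emptyset$: here Lemma \ref{monomial} gives $\fk l_\lambda^1 = \fk s_\lambda^1 = \fkn$, so the chain (\ref{chain}) stabilizes at $\fkn$ from step one onward and $\olfkl_\lambda = \olfks_\lambda = \fkn$. A key preliminary observation is that for quasi-monomial $\lambda$ one in fact has the implication $\cS_\lambda = \emptyset \Rightarrow \cL_\lambda = \emptyset$: given $(i,j) \in \cL_\lambda$, the quasi-monomial hypothesis supplies a unique $k$ with $\lambda_{ik} \neq 0$ and $(j,k) \in \cP$, and since $(i,j) \notin \cS_\lambda$ the defining condition forces $(j,k) \in \cL_\lambda$. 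Iterating this argument with $(j,k)$ in place of $(i,j)$ yields indices $i < j < k < l < \cdots$ strictly ascending without terminating, which is absurd in $[n]$. So in the inductive step we may assume $\cS_\lambda \neq \emptyset$.

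For the inductive step, set $\fkn' = \fk s_\lambda^1$. By Lemma \ref{monomial} together with property (a) of (\ref{chain}), $\fkn' = \fkt_{n, \cP \setminus \cS_\lambda}(q)$ is a pattern subalgebra, proper since $\cS_\lambda \neq \emptyset$. The restriction $\lambda' = \lambda \downarrow \fkn' \in (\fkn')^*$ is again quasi-monomial: any position $(i_0, k_0) \in \cS_\lambda$ with $\lambda_{i_0 k_0} \neq 0$ would, by the defining condition of $\cS_\lambda$ together with quasi-monomiality of $\lambda$, force the unique nonzero column $k_0$ of row $i_0$ to satisfy $(k_0, k_0) \in \cP$, which is impossible. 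Unwinding the inductive definitions yields $\fk l_{\lambda'}^i = \fk l_\lambda^{i+1}$ and $\fk s_{\lambda'}^i = \fk s_\lambda^{i+1}$ for every $i \geq 1$, since the chain for $\lambda'$ starts from $\fkn' = \fk s_\lambda^1$ and $B_{\lambda'}$ is the restriction of $B_\lambda$ to $\fkn' \times \fkn'$. Passing to the stable values gives $\olfkl_\lambda = \olfkl_{\lambda'}$ and $\olfks_\lambda = \olfks_{\lambda'}$, and the inductive hypothesis applied to $(\fkn', \lambda')$ supplies $\olfkl_{\lambda'} = \olfks_{\lambda'}$, completing the proof. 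The only step requiring real insight is the infinite-descent observation in the second paragraph; everything else is a direct translation through the recursive definitions of the chain.
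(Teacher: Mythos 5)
Your proof is correct and follows essentially the same route as the paper: the paper also observes that $\fk s_\lambda^1$ is a pattern algebra on which $\lambda$ restricts quasi-monomially and then proves the key implication $\cS_\lambda = \varnothing \Rightarrow \cL_\lambda = \varnothing$ (choosing $(i,j)\in\cL_\lambda$ with $j$ maximal, which is the finite form of your ascending-chain argument), invoking ``dimensional considerations'' to express the induction you spell out explicitly. The only nit is terminological: you call the termination argument ``infinite descent'' when the column indices $i<j<k<\cdots$ strictly ascend.
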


\begin{proof}
We note that if $\fkn =\fkt_{n,\cP}$ is a pattern algebra and $\lambda \in \fkn^*$ is quasi-monomial, then $\fk l_\lambda^1$, $\fk s_\lambda^1$ are also pattern algebras and $\lambda$ restricts to a quasi-monomial map $\fk s_\lambda^1\to \FF_q$.  Noting the chain (\ref{chain}), it suffices therefore to show that if $\lambda \in \fkn^*$ is quasi-monomial, then either $\fk l_\lambda^1 = \fk s_\lambda^1 $ or $\fk s_\lambda^1 \subsetneq \fkn$. This will force $\olfkl_\lambda = \olfks_\lambda$ by dimensional considerations, and thus $\xi_\lambda = \psi_\lambda$ by Theorem \ref{structural}.

To this end, suppose $\fk s_\lambda^1 = \fkn$ so that $\cS_\lambda = \varnothing$.  If $(i,j) \in \cL_\lambda$ so that there exists $k\in [n]$ with $\lambda_{ik}\neq0$ and $(j,k) \in \cP$, then $(j,k) \in \cL_\lambda$ since otherwise $(i,j) \in \cS_\lambda$.  Choosing $(i,j) \in \cL_\lambda$ with $j$ maximal and applying this argument thus gives a contradiction.  Therefore $\cL_\lambda=\varnothing$ so $\fk l_\lambda^1 = \fk s_\lambda^1$. 
\end{proof}

Before exiting this section, we discuss the following important fact due originally to Andr\'e \cite{Andre1} and Yan \cite{Yan}: the quasi-monomial maps $\lambda \in \fkt_n(q)^*$  index the distinct supercharacters of $\UT_n(q)$; i.e., the map 
\[\barr{ccc}   \bigl\{{ \text{Quasi-mononomial maps } \lambda \in \fkt_n(q)^*}\bigr \} &\to&\bigl\{{\text{Supercharacters of $\UT_n(q)$}}\bigr\} \\
\lambda&\mapsto& \chi_\lambda\earr\] is a bijection.
Furthermore, each quasi-monomial $\lambda$ naturally corresponds to a set partition of $[n]$.  This lends an interesting combinatorial interpretation to  many of the representation theoretic properties of the supercharacters $\chi_\lambda$.

In more detail, we recall that a \emph{set partition} of $[n]$ is a set $\Lambda = \{\Lambda_1,\dots,\Lambda_k\}$ of disjoint nonempty sets $\Lambda_i$ whose union is $[n]$. We call the sets $\Lambda_i$ the \emph{parts} of $\Lambda$ and write $\Lambda \vdash[n]$ to indicate that $[n]$ is the union of the parts of $\Lambda$.  We define the \emph{(unlabeled) shape} of a quasi-monomial $\lambda \in \fkt_n(q)^*$ as the finest set partition of $[n]$ in which $i,j$ belong to the same part whenever $\lambda_{ij} \neq 0$.  
Alternatively, the shape of $\lambda$ is the set partition whose parts are the vertex sets of the weakly connected components of the (weighted, directed) graph whose adjacency matrix is $\(\lambda_{ij}\)$.
For example, if $a,b,c \in \FF_q^\times$ then
\[\lambda= ae_{1,3}^* +b e_{2,4}^*  +ce_{3,5}^*  \in \fkt_6(q)^* \qquad\text{has shape}\qquad \{\{1,3,5\},\{2,4\}, \{6\}\}\vdash[6].\]  
The shape of a supercharacter $\chi$ of $\UT_n(q)$ is by definition the shape of the unique quasi-monomial $\lambda \in \fkt_n(q)^*$ with $\chi = \chi_\lambda$.
We introduce this terminology largely so that we can succinctly refer to the supercharacters of $\UT_n(q)$ which house our exotic irreducible characters as constituents.  The shape of a supercharacter encapsulates a good deal of less than obvious information about its irreducible constituents, however, a theme which we explore in greater detail in \cite{supp2},

\subsection{Complex characters of algebra groups}\label{cmplx-chars}

\def\mA{\mathrm{A}_n(q)}

Our strategy to construct exotic characters of $\UT_n(q)$ is to construct  them instead for the smaller algebra group $\ols_\lambda$ for some $\lambda\in \fkt_n(q)^*$.  To accomplish this, we must of course have some examples of algebra groups whose characters have values in large cyclotomic fields.  We provide a sort of quintessential construction here.

Fix a positive integer $n>1$ and  define  $\fka_n(q)$ as the nilpotent $\FF_q$-algebra  \be\label{fka-1} \fka_n(q) = \left\{ X \in \fkt_n(q) : X_{i+1,j+1} =X_{i,j}\text{ for }1\leq i<j < n\right\},\ee so that $\fka_n(q)$ consists of all $n\times n$-matrices over $\FF_q$ of the form
\[X = \(\barr{cccccc}
 0 & a_2 & a_3 & \cdots & a_n  \\
   & 0& a_2 & \ddots & \vdots  \\
 &     & 0 &  \ddots & a_3  \\
    &&  & \ddots & a_2 \\
    & & & & 0 
     \earr\),\qquad\text{with $a_i \in \FF_q$ and zeros below the diagonal}.\]     Let $\mA=1+\fka_n(q)$ be the corresponding algebra group.  
    If $X$ is any such matrix with $a_2\neq 0$ then the elements $X,X^2,\dots,X^{n-1}$ form a basis for $\fka_n(q)$  over $\FF_q$, and it follows that $\fka_n(q)$ is commutative and $\mA$ is abelian.
      
%Fix a nonzero element $t \in \FF_q$ and 
Let $\kappa \in \fka_n(q)^*$ be the linear map defined by 
\be \label{kappa} \kappa(X) =X_{1,n},\qquad\text{for $X \in \fka_n(q)$.}\ee  It is not difficult to see directly from the definitions in Section \ref{xi} that 
 $ \olfkl_\kappa = \FF_q\spanning\{e_{1,n}\} $ and % \qquad\text{and}\qquad 
 $\olfks_\kappa = \fk a_n(q)$.    Thus $\ols_\kappa =  \mA$ so $\xi_\kappa$ coincides with the supercharacter $\chi_\kappa$, and %as $\mA$ is abelian,
  we have 
  \be\label{formula} \chi_\kappa(g) = \Ind_{\oll_\kappa}^{\mA}(\theta_\kappa)(g) = \left\{\barr{ll} q^{n-2}\theta(g_{1,n}),&\text{if }g \in L_\kappa, \\ 0,&\text{otherwise},\earr\right.\qquad\text{for }g\in \mA.\ee
Since $\langle \chi_\kappa,\chi_\kappa\rangle_{\mA} = \chi_\kappa(1) = q^{n-2}$, the supercharacter $\chi_\kappa$ decomposes as a sum of $q^{n-2}$ distinct linear characters.  We can say more about these constituents:

%In the following proposition, we write $\QQ(\psi)$ to denote the field extension given by adjoining to $\QQ$ all values of the character $\psi$.

\begin{proposition}\label{cmplx-constits}  Fix an integer $n>1$ and define $\kappa \in \fka_n(q)^*$ by (\ref{kappa}) as above. Let $p>0$ be the characteristic of $\FF_q$ and suppose $p^i$ is the largest power of $p$ less than $n$.  The following then hold:
\begin{enumerate}

\item[(1)] The values of the irreducible constituents of $\chi_\kappa$ are contained in the cyclotomic field $\QQ(\zeta_{p^{i+1}})$, and some irreducible constituent of $\chi_\kappa$ takes as values every $p^{i+1}$th root of unity.% $\zeta_{p^{i+1}}$ as a value.

\item[(2)] The Kirillov function $\psi_\kappa$ is a character of $\mA$ if and only if $n= 2$.

\item[(3)] The exponential Kirillov function $\logpsi_\kappa$ is a character of $\mA$ if and only if $n\leq p$.

\end{enumerate}
%Furthermore, if $n>2$ then the Kirillov function $\psi_\kappa$ is not a character of $\mA$.
 \end{proposition}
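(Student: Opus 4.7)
The plan is to prove parts (1), (2), (3) in turn, leveraging throughout the key observation that $\mA$ is abelian. This holds because $\fka_n(q)$ is spanned by the powers of the single element $e = e_{12} + e_{23} + \cdots + e_{n-1,n}$ and is therefore a commutative algebra. In particular, every irreducible character of $\mA$ is linear, and since all coadjoint orbits in $\fka_n(q)^*$ are singletons, the Kirillov function $\psi_\kappa$ reduces to $\theta_\kappa$.

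For part (1), I would first determine that the exponent of $\mA$ equals $p^{i+1}$. Commutativity and the freshman's dream give $(1+X)^{p^j} = 1 + X^{p^j} = 1 + \sum_k a_k^{p^j} e^{kp^j}$ for $X = \sum a_k e^k$, and this vanishes iff $kp^j \geq n$ whenever $a_k \neq 0$. Taking $X = e$ forces the minimal such $p^j$ to be $p^{i+1}$, so every character of $\mA$ takes values in $\QQ(\zeta_{p^{i+1}})$. For the sharpness claim I would observe that the $q^{n-2}$ irreducible constituents of $\chi_\kappa$ are exactly the linear characters of $\mA$ extending $\theta_\kappa \downarrow \oll_\kappa$, and hence form a coset of the character group of $\mA/\oll_\kappa$ inside that of $\mA$. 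A constituent $\chi$ takes every $p^{i+1}$-th root of unity as a value iff $\chi$ has order $p^{i+1}$ as a character of $\mA$, iff $\chi$ restricts nontrivially to the subgroup of $p^i$-th powers $\mA^{p^i} = 1 + \FF_q\text{-span}\{e^{p^i}, e^{2p^i}, \ldots\}$. Since $\oll_\kappa = 1 + \FF_q e^{n-1}$, one checks that $\mA^{p^i} \subset \oll_\kappa$ precisely when $n = p^i + 1$: in that exceptional case $\mA^{p^i} = \oll_\kappa$ and $\theta_\kappa$ is nontrivial there, so every extension has order $p^{i+1}$; in all other cases some character of $\mA/\oll_\kappa$ is nontrivial on $\mA^{p^i}$ and can be used to twist any extension into one of order $p^{i+1}$.

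Part (2) reduces to computing when $\theta_\kappa$ is a group homomorphism. Since $\mA$ is abelian, $\psi_\kappa = \theta_\kappa$ is a character iff it is multiplicative iff $\kappa(XY) = (XY)_{1, n} = 0$ for all $X, Y \in \fka_n(q)$. Writing $X = \sum a_k e^k$ and $Y = \sum b_l e^l$, this becomes $\sum_{k+l = n-1,\, k, l \geq 1} a_k b_l = 0$ identically, which happens exactly when the index set is empty, that is when $n = 2$.

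For part (3), the easy direction ($n \leq p$) is immediate from Theorem \ref{structural}(4): $\fka_n(q)^p = 0$ since $e^p = 0$, so $(\olfks_\kappa)^p \subset \olfkl_\kappa \cap \ker \kappa$ trivially and $\logpsi_\kappa \in \Irr(\mA)$. For the converse ($n > p$), since $\mA$ is abelian and $\logpsi_\kappa(1) = 1$, $\logpsi_\kappa$ being a character would force it to be a group homomorphism, and I would exhibit explicit $X, Y$ violating this. When $p \mid n - 1$, I take $X = e^{(n-1)/p}$ and observe $\exp(X)^p = \exp(X^p) = \exp(e^{n-1}) = 1 + e^{n-1}$ (using commutativity, the freshman's dream on truncated series, and $(e^{n-1})^2 = 0$), giving $\logpsi_\kappa(\exp(X)^p) = \theta(1) \neq 1 = \logpsi_\kappa(\exp X)^p$. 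When $p \nmid n - 1$ (which forces $n > p + 1$), I take $X = e$ and $Y = e^{n-p}$ and compute $Z$ with $\exp(Z) = \exp(X)\exp(Y)$ directly: an index count shows that the defect $\exp(X)\exp(Y) - \exp(X+Y) = \sum_{i+j \geq p,\, i, j \leq p-1} \frac{X^i Y^j}{i!\, j!}$ has nonzero coefficient only at $e^{n-1}$, coming from the single pair $(i, j) = (p-1, 1)$ and evaluating to $-e^{n-1}$ by Wilson's theorem. Combined with the easy identity $\exp(A + c e^{n-1}) = \exp(A) + c e^{n-1}$ (which holds for any $A \in \fka_n(q)$ and $c \in \FF_q$ because $e^{n-1}$ annihilates $\fka_n(q)$), this yields $Z = X + Y - e^{n-1}$ and hence $Z_{1, n} = -1 \neq 0 = X_{1, n} + Y_{1, n}$. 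The main obstacle is verifying the index count cleanly in the intermediate regime $p + 1 < n < 2p$, where $Y$ has nilpotency index greater than two but the hypothesis $p \nmid n - 1$ just barely rules out extra contributions to the defect at $e^{n-1}$.
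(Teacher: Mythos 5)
Your proof is correct. Part (2) and the outline of part (3) essentially match the paper's argument, but part (1) takes a genuinely different route: the paper computes the exponent of $\mA$ the same way, then picks the element $x = 1+e$ of order $p^{i+1}$, builds a character $\vartheta$ of the cyclic group $\langle x\rangle$ of maximal order with $\vartheta(x)^{p^i} = \theta(1)$, and shows $\langle \chi_\kappa, \Ind_{\langle x\rangle}^{\mA}(\vartheta)\rangle > 0$ by Frobenius reciprocity; since $\mA$ is abelian, every constituent of $\Ind_{\langle x\rangle}^{\mA}(\vartheta)$ sends $x$ to $\vartheta(x)$ and so has the required values. You instead identify the constituents of $\chi_\kappa$ as the coset of $\widehat{\mA/\oll_\kappa}$ in $\widehat{\mA}$ lying over $\theta_\kappa|_{\oll_\kappa}$, characterize the order-$p^{i+1}$ characters as those nontrivial on $\mA^{p^i}$, and produce one by a twisting argument according to whether $\mA^{p^i} \subset \oll_\kappa$ (which you correctly identify as the case $n = p^i + 1$). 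Your route is more structural and avoids the explicit Frobenius-reciprocity computation; the paper's is more hands-on and exhibits an actual inducing datum. Both are valid.

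On part (3), your case split is by $p \mid n-1$ versus $p \nmid n-1$, whereas the paper splits on $n = p+1$ versus $n \geq p+2$; both cover all of $n > p$. One small remark on your closing sentence: the defect count for $\exp(e)\exp(e^{n-p})$ does \emph{not} actually require the hypothesis $p \nmid n-1$. For any $n \geq p+2$, writing the defect as $\sum_{i,j\leq p-1,\ i+j\geq p} e^{i+(n-p)j}/(i!\,j!)$, the only pair with $i + (n-p)j = n-1$ satisfying the constraints is $(i,j)=(p-1,1)$, and no pair gives a strictly smaller exponent; the condition $j \leq (n-1-p)/(n-p-1) = 1$ already forces $j\leq 1$, with no appeal to divisibility. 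So the intermediate regime $p+1 < n < 2p$ is not in fact an obstacle, and the paper runs its second case for all $n\geq p+2$ for precisely this reason. Your argument is still correct as written because your first case absorbs the remaining values of $n$, but the worry you flag at the end is spurious.
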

 
 \begin{remark}  
The normalization of $\kappa \in \fka_n(q)^*$ is not important, as all holds still if one replaces $\kappa$ with $t\cdot \kappa$ for any nonzero $t \in \FF_q$; this just corresponds to a different choice of $\theta : \FF_q^+ \to \CC^\times$.  Also, regarding (3) we note that $\logpsi_{\nu}$ is a character  for every $\nu\in \fka_n(q)^*$ if $p\leq n$ by \cite[Corollary 3]{Sangroniz}.
 \end{remark}
%The proposition implies that if $p^i$ is the largest power of $p$ dividing  $n-1$, then every irreducible constituent of $\chi_\kappa$ takes values in $\QQ(\zeta_{p^{i+1}})$, and some of these constituents take values which are not in $\QQ(\zeta_{p^i})$ when $(p,i) \neq (2,0)$.  In this exceptional case the second half of the preceding statement is false \emph{a priori} since $\QQ(\zeta_{2})=\QQ $.  
% \end{remark}
 
 \begin{proof}
Write $\fka = \fka_n(q)$ and $G = \mA$, and let $X \in \fka$ be the matrix with $X_{1,2}= X_{2,3}=\dots=X_{n-1,n}=1$ and all other entries zero.  One checks that $X^i=0$ if and only if $i\geq n$, and hence that $Y^n=0$ for all $Y \in \fka$ since powers of $X$ span $\fka$. %Then $X^i$ for $1\leq i \leq n-1$ is the matrix with ones in the positions $(1,1+i),(2,2+i),\dots,(n,n+i)$ and zeros everywhere else, and $X^n=0$.  Since these matrices span $\fka_n(q)$ over $\FF_q$ we have $Y^n = 0$ for all $Y \in \fka_n(q)$. % It follows that $y^{p^i} = 1$ for all $y \in G$ if $p^i\geq n$.

Let $r$  denote the largest power of $p$ less than $n$.  Then  $pr\geq n$ so $y^{pr}=1+(y-1)^{pr}=1$ for all $y \in G$ %$(1+Y)^{pr}=1+Y^{pr}=1$ for all $Y \in \fka_n(q)$, 
and as $G$ is an abelian $p$-group this implies the first half of (1). 
Observe that $x = 1+X \in G$ has order $pr$ since 
$x^{p^i} = 1+X^{p^i} \neq 1$ for $1\leq i \leq \log_p (r)$.  Let $H = \langle x \rangle$, % be the cyclic subgroup of order $pr$ generated by $x$, 
and choose $\vartheta \in \Irr(H)$ such that $\vartheta(x)$ is a primitive $pr^{\mathrm{th}}$ root of unity with $\vartheta(x)^r = \theta(1)$.  Working from the definitions, it is straightforward to show that $\left\langle \chi_\kappa,\Ind_H^{G}(\vartheta) \right\rangle_{G}>0$ so
%
%The subgroup $L_\kappa$ is a direct product of cyclic groups of order $p$ so $H \cap L_\kappa = \langle x^r\rangle \cap L_\kappa$ is a subgroup of order 1 or $p$.  As $x^r=1+X^r \in L_\kappa$ if and only if $r=n-1$, 
%it follows that $H\cap L_\kappa = \{1\}$ if $r\neq n-1$, % in which case $\chi_\kappa(y) = 0$ for all $y \in H - \{1\}$,
% while $H\cap L_\kappa = \{x^{ir} : i \in [p]\}$ if $r=n-1$ in which case \[ \chi_\kappa(x^{ir}) = q^{n-2} \theta(i) = q^{n-2}\theta(1)^i = q^{n-2} \vartheta(x^{ir}),\qquad\text{for }i \in[p].\]  Since $\chi_\kappa$ vanishes on $G-L_\kappa$, we thus have 
%\[\left\langle \chi_\kappa,\Ind_H^{G}(\vartheta) \right\rangle_{G}  =\left \langle \Res_H^G(\chi_\kappa), \vartheta\right \rangle_H =\left\{\ba& q^{n-2}/r,& &\text{if }r=n-1, \\ & q^{n-2}/(pr),&&\text{if }r\neq n-1,\ea\right.\]
%%
%and in either case
 at least one irreducible constituent of $\Ind_H^G(\vartheta) $ appears in $\chi_\kappa$.  
Since $G$ is abelian we have $\Ind_H^G(\vartheta)(x) = \frac{|G|}{|H|} \vartheta(x)$, and it follows that every irreducible constituent $\psi$ of $\Ind_H^G(\vartheta)$ has $\psi(x) = \vartheta(x)$, completing the proof of (1).%  As this value is a primitive $pr^{\mathrm{th}}$ root of unity, it follows that $\psi$ takes as values all $pr^{\mathrm{th}}$ roots of unity, which proves (1).

A function $G \to \CC$ is a character if and only if it defines a homomorphism $G\to \CC^\times$.  Since $\fka$ is commutative we have $\kappa^G = \{\kappa\}$ so $\psi_\kappa  = \theta_\kappa$.  This is a homomorphism if and only if $n= 2$, as one can check by considering its values at powers of $x=1+X$.  %, so we conclude (2).
If $n\leq p$ then $\logpsi_\kappa$ is a character by \cite[Corollary 3]{Sangroniz}.  Suppose $n>p$ and let $Z = X^{n-1}$.  Since $Z$ annihilates $\fka$, we have $\exp(Y+Z) = \exp(Y)+Z$ for all $Y \in \fka$.  If $n=p+1$  then for $x = \exp(X)$ and $z=1+Z = \exp(Z)$ we have $x^{n-1} =z$, but $\logpsi_\kappa(x)^{n-1} =\logpsi_\kappa(x)=1\neq \logpsi_\kappa(z) = \theta(1)$; thus $\logpsi_\kappa$ is not a homomorphism.
%\[ \logpsi_\kappa\( \exp(X)\)^{n-1} =\( \psi_\kappa(1+X)\)^{n-1} = 1 \neq \theta(1) = \psi_\kappa(1+Z) = \logpsi_\kappa\(\exp(X)^{n-1}\).\]  
%
If $n\geq p+2$, one checks that if $a = \exp(X^{n-p})$ and $b = \exp(X)$ then 
\[\logpsi_\kappa(a) = \logpsi_\kappa(b) = 1\qquad\text{but}\qquad \logpsi_\kappa(ab) = \psi_\kappa(1+X^{n-p} +X-Z) = \theta(-1)\neq 1,\]
so
 $\logpsi_\kappa$ is again not a homomorphism, proving (3).
%
%and observe that since $\fka$ is commutative 
%\[ \exp(A)\cdot \exp(B) = \exp(A+B) +\sum_{i=1}^{p-1} \sum_{j=p-i}^{p-1} \frac{1}{i!j!} A^i B^j,\qquad\text{for }A,B\in \fka.\]  Here the coefficient $\frac{1}{i!j!}$ is interpreted as the inverse of the product $1\cdot 2 \cdots i \cdot 1 \cdot 2 \cdots j$ in the prime subfield $\FF_p\subset \FF_q$.  If $i \in [p-1]$ and $j \in [p-i,p-1]$ then since $n-p\geq 2$ we have $i(n-p)+j \geq n-1$,
%%\[i(n-p)+j \geq   i(n-p) + p-i = n + (i-1)(n-p)-i \geq n+i-2 \geq n-1\]
% with equality if and only if $i=1$ and $j=p-1$.  It follows that if $A = X^{n-p}$ and $B = X$ then 
%\[\barr{c}\exp(A)\cdot \exp(B) = \exp(A+B) + \frac{1}{(p-1)!}Z = \exp(A+B)-Z=\exp(A+B-Z).\earr\]  Since $n-p < n-1$ we have $\logpsi_\kappa(a) = \logpsi_\kappa(b) = 1$ for $a = \exp(A)$ and $b = \exp(B)$, but $\logpsi_\kappa(ab) = \psi_\kappa(1+A+B-Z) = \theta(-1)\neq 1$, so $\logpsi_\kappa$ is not a homomorphism, which proves (3).
\end{proof}

\subsection{Exotic characters of $\UT_n(q)$}\label{constructions}

\def\block{\square}
\def\lt{\leftexp{\mathrm{L}}{\hspace{-0.8mm}\triangle}}
\def\ut{\leftexp{\mathrm{U}}{\hspace{-0.8mm}\triangle}}
\def\A{\cA}
\def\B{\cB}
\def\C{\cC}
\def\D{\cD}
\def\Z{\cZ}
\def\J{\cJ}

The goal of this section is to prove the following theorem promised in the introduction.

\begin{theorem}\label{main}
Let $p>0$ be the characteristic of $\FF_q$ and let $r=p^e$ for any integer $e>0$.  If $n > 6r$, then  $\UT_n(q)$ has an irreducible character of degree $q^{5r^2-2r}$ whose set of values is contained in $\QQ(\zeta_{pr})$ but not $\QQ(\zeta_r)$.  Such a character in fact occurs as an irreducible constituent of 
%$\UT_n(q)$ has an irreducible character of degree $q^{5r^2-2r}$ whose values are contained in $\QQ(\zeta_{pr})$ but not $\QQ(\zeta_r)$.  
%%\end{theorem}
%%
%In fact, such a character occurs as an irreducible constituent of 
each supercharacter of $\UT_n(q)$ whose shape is the set partition of $[n]$ whose parts are the $n-4r-1$ sets 
\[ \ba \{ 1,2r+1,3r+1,4r+1,6r+1\}& \\
 \{ i,2r+i,3r+i,5r+i \}&\text{ for }1< i \leq r\\
 \{ i, 4r+1+i\} &\text{ for }r+1\leq i \leq 2r \\
\{ i\}&\text{ for }6r+1<i\leq n.
 \ea\]

\end{theorem}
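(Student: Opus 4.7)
The approach is to apply the machinery of Theorem \ref{structural} to a quasi-monomial $\lambda \in \fkt_n(q)^*$ with the stated shape, and to identify a quotient of $\ols_\lambda$ with the abelian algebra group $1+\fka_{r+1}(q)$ of Section \ref{cmplx-chars}. Concretely, take $\lambda$ to be the functional with $\lambda_{ij}=1$ at each of the $4r+1$ positions $(i,j)$ corresponding to an edge of the stated shape and $\lambda_{ij}=0$ elsewhere.  Since Lemma \ref{monomial} and the construction of $\xi_\lambda$ in Theorem \ref{structural} depend only on the support of $\lambda$, the same argument will apply uniformly to any quasi-monomial with the given shape, and will produce the desired exotic constituent in each of the supercharacters of the stated shape.

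The first main step is to compute the chain $\fk l^i_\lambda \subset \fk s^i_\lambda$ by iterating Lemma \ref{monomial} (regarding each $\fk s^{i-1}_\lambda$ as a pattern subalgebra in its own right and restricting $\lambda$ accordingly for the next step).  The peculiar gap pattern $(2r, r, r, 2r)$ of the first part of the shape, together with the symmetric placement of the second and third parts, is engineered so that this chain stabilizes after a few steps and produces $\olfkl_\lambda$ and $\olfks_\lambda$ as explicit pattern subalgebras satisfying $\dim \fkn - \dim \olfks_\lambda = 5r^2 - 2r$.  This gives $[G:\ols_\lambda] = q^{5r^2-2r}$, which will turn out to be the degree of the irreducible constituent we construct.

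The second main step is to exhibit a two-sided ideal $\fk m \subset \olfks_\lambda$ with $\olfkl_\lambda \subset \fk m \subset \ker(\lambda\downarrow\olfks_\lambda)$ such that $\olfks_\lambda/\fk m$ is isomorphic to the commutative algebra $\fka_{r+1}(q)$, with $\lambda\downarrow\olfks_\lambda$ descending to the functional $\kappa$ of (\ref{kappa}) on the quotient.  Since $r = p^e$ is precisely the largest power of $p$ less than $r+1$, Proposition \ref{cmplx-constits}(1) applied to $1+\fka_{r+1}(q)$ guarantees that the supercharacter $\chi_\kappa$ has a linear irreducible constituent $\eta$ whose values lie in $\QQ(\zeta_{pr})$ but not in $\QQ(\zeta_r)$.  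By Observation \ref{inflation} we may pull $\eta$ back through $\ols_\lambda/(1+\fk m) \cong 1+\fka_{r+1}(q)$ to obtain a linear character $\tilde\eta$ of $\ols_\lambda$ restricting to $\theta_\lambda$ on $\oll_\lambda$, hence appearing in $\Ind_{\oll_\lambda}^{\ols_\lambda}(\theta_\lambda)$.  Theorem \ref{structural}(3) then tells us that $\Ind_{\ols_\lambda}^G(\tilde\eta)$ is an irreducible constituent of $\xi_\lambda$ and therefore of $\chi_\lambda$; it has degree $[G:\ols_\lambda] = q^{5r^2-2r}$, and by the Remark following Theorem \ref{structural} (applied with a Galois element fixing $\QQ(\zeta_r)$), its set of values lies in $\QQ(\zeta_{pr})$ but not in $\QQ(\zeta_r)$, as required.

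The main obstacle will be the explicit computation of $\olfkl_\lambda$ and $\olfks_\lambda$ and the identification of the ideal $\fk m$ producing the $\fka_{r+1}(q)$ quotient.  This requires careful position-by-position bookkeeping through several iterations of the $(\fk l^i_\lambda, \fk s^i_\lambda)$ construction, and it is precisely here that the peculiar second and third parts of the stated shape earn their presence: they are the minimal auxiliary edges needed to force the correct abelian quotient structure on $\ols_\lambda$ while pinning down the dimension count $5r^2-2r$.
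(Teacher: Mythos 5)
There is a genuine gap in your choice of $\lambda$, and it undermines the whole plan. You propose to take $\lambda$ to be the quasi-monomial functional whose support is exactly the arc set of the stated shape, and then to iterate Lemma~\ref{monomial} by restricting $\lambda$ to each successive pattern subalgebra $\fk s^{i-1}_\lambda$. But the theorem immediately following Lemma~\ref{monomial} shows that for a quasi-monomial $\lambda$ on a pattern algebra, iterating this way is precisely what forces $\olfkl_\lambda = \olfks_\lambda$, and hence $\xi_\lambda = \psi_\lambda$ is already irreducible. An irreducible $\xi_\lambda$ is a Kirillov function, and every Kirillov function has values in $\QQ(\zeta_p)\subset \QQ(\zeta_r)$; so the constituent you would extract from your $\xi_\lambda$ could never be exotic. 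In particular, your claim that the construction of $\xi_\lambda$ ``depends only on the support of $\lambda$'' is false for the later stages of the chain: $\fk l^i_\lambda, \fk s^i_\lambda$ for $i\geq 2$ depend on the actual values $\lambda_{ij}$, and different functionals in the same two-sided orbit $G\lambda G$ can give different $\xi$'s with genuinely different constituent structure. (The degree and quotient calculations you sketch are consistent with the paper's numbers, but only because they implicitly assume $\olfkl_\lambda \subsetneq \olfks_\lambda$, which your choice of $\lambda$ cannot deliver.)

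The paper avoids this by replacing the quasi-monomial $\lambda'$ of your proposal with $\lambda = \lambda' - \lambda''$, a non-quasi-monomial element of the same two-sided orbit (having two nonzero entries in several rows). For this $\lambda$, the spaces $\fk l^i_\lambda, \fk s^i_\lambda$ with $i \ge 2$ are no longer pattern subalgebras: they are cut out by genuine linear identities $X_\alpha = X_{\imap(\alpha)}$ rather than vanishing conditions, so Lemma~\ref{monomial} cannot be iterated and the computation requires the separate, hands-on analysis of Lemma~\ref{technical}. The payoff is that $\olfkl_\lambda$ and $\olfks_\lambda$ differ exactly by the $\D$-coordinates, giving $|\olfks_\lambda/\olfkl_\lambda| = q^{r-1}$, so $\xi_\lambda$ is reducible with $q^{r-1}$ linear-induced constituents of degree $q^{5r^2-2r}$; only then can the quotient $\ols_\lambda \twoheadrightarrow 1+\fka_{r+1}(q)$ and Proposition~\ref{cmplx-constits} produce a constituent with values outside $\QQ(\zeta_r)$. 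To repair your argument, you would need to make the same switch to a non-quasi-monomial representative of the orbit (and then supply the technical computation of $\olfkl_\lambda, \olfks_\lambda$ that replaces the inapplicable iteration of Lemma~\ref{monomial}), and you would also want to invoke Observation~\ref{diagonal} at the end to transfer the conclusion to all supercharacters of the given shape, rather than trying to argue support-invariance directly.
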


 \definecolor{hellgrau}{gray}{0.5}
 
\def\Gtmp{G}
 \def\fkntmp{\fkn}
 
 \begin{remark}
 Setting $r=2$ and $n=13$ in this statement proves Conjecture 4.1 in \cite{IK05}.
 Figure \ref{fig0} below illustrates the set partition in the theorem when $r=2,4,8,16$  and $n=6r+1$.

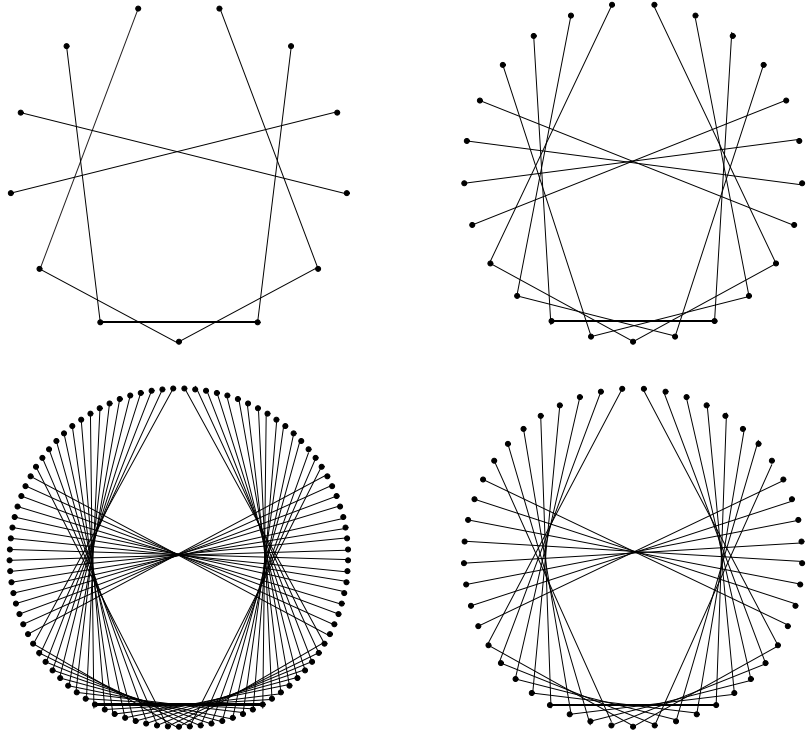
\begin{figure}[h]
\[\barr{c} 
\setlength{\unitlength}{4.5cm}
\begin{picture}(1.00, 1.00)
\put(0.3803,0.9855){\line(-29,-77){0.2918}}
\put(0.3803,0.9855){\circle*{0.015}}
\put(0.1684,0.8743){\line(10,-82){0.0992}}
\put(0.1684,0.8743){\circle*{0.015}}
\put(0.0325,0.6773){\line(96,-24){0.9639}}
\put(0.0325,0.6773){\circle*{0.015}}
\put(0.0036,0.4397){\line(96,24){0.9639}}
\put(0.0036,0.4397){\circle*{0.015}}
\put(0.0885,0.2160){\line(41,-22){0.4115}}
\put(0.0885,0.2160){\circle*{0.015}}
\put(0.2676,0.0573){\line(46,0){0.4647}}
\put(0.2676,0.0573){\circle*{0.015}}
\put(0.5000,0.0000){\line(41,22){0.4115}}
\put(0.5000,0.0000){\circle*{0.015}}
\put(0.7324,0.0573){\line(10,82){0.0992}}
\put(0.7324,0.0573){\circle*{0.015}}
\put(0.9115,0.2160){\line(-29,77){0.2918}}
\put(0.9115,0.2160){\circle*{0.015}}
\put(0.9964,0.4397){\circle*{0.015}}
\put(0.9675,0.6773){\circle*{0.015}}
\put(0.8316,0.8743){\circle*{0.015}}
\put(0.6197,0.9855){\circle*{0.015}}
\end{picture}
\qquad\qquad
\setlength{\unitlength}{4.5cm}
\begin{picture}(1.00, 1.00)
\put(0.4373,0.9961){\line(-36,-76){0.3595}}
\put(0.4373,0.9961){\circle*{0.015}}
\put(0.3159,0.9649){\line(-16,-83){0.1582}}
\put(0.3159,0.9649){\circle*{0.015}}
\put(0.2061,0.9045){\line(5,-84){0.051}}
\put(0.2061,0.9045){\circle*{0.015}}
\put(0.1147,0.8187){\line(26,-80){0.2609}}
\put(0.1147,0.8187){\circle*{0.015}}
\put(0.0476,0.7129){\line(93,-37){0.9279}}
\put(0.0476,0.7129){\circle*{0.015}}
\put(0.0089,0.5937){\line(99,-13){0.9902}}
\put(0.0089,0.5937){\circle*{0.015}}
\put(0.0010,0.4686){\line(99,13){0.9902}}
\put(0.0010,0.4686){\circle*{0.015}}
\put(0.0245,0.3455){\line(93,37){0.9279}}
\put(0.0245,0.3455){\circle*{0.015}}
\put(0.0778,0.2321){\line(42,-23){0.4222}}
\put(0.0778,0.2321){\circle*{0.015}}
\put(0.1577,0.1355){\line(47,-12){0.4666}}
\put(0.1577,0.1355){\circle*{0.015}}
\put(0.2591,0.0618){\line(48,0){0.4818}}
\put(0.2591,0.0618){\circle*{0.015}}
\put(0.3757,0.0157){\line(47,12){0.4666}}
\put(0.3757,0.0157){\circle*{0.015}}
\put(0.5000,0.0000){\line(42,23){0.4222}}
\put(0.5000,0.0000){\circle*{0.015}}
\put(0.6243,0.0157){\line(26,80){0.2609}}
\put(0.6243,0.0157){\circle*{0.015}}
\put(0.7409,0.0618){\line(5,84){0.051}}
\put(0.7409,0.0618){\circle*{0.015}}
\put(0.8423,0.1355){\line(-16,83){0.1582}}
\put(0.8423,0.1355){\circle*{0.015}}
\put(0.9222,0.2321){\line(-36,76){0.3595}}
\put(0.9222,0.2321){\circle*{0.015}}
\put(0.9755,0.3455){\circle*{0.015}}
\put(0.9990,0.4686){\circle*{0.015}}
\put(0.9911,0.5937){\circle*{0.015}}
\put(0.9524,0.7129){\circle*{0.015}}
\put(0.8853,0.8187){\circle*{0.015}}
\put(0.7939,0.9045){\circle*{0.015}}
\put(0.6841,0.9649){\circle*{0.015}}
\put(0.5627,0.9961){\circle*{0.015}}
\end{picture}
\\ \\ 
\setlength{\unitlength}{4.5cm}
\begin{picture}(1.00, 1.00)
\put(0.483809,0.999738){\line(-41,-75){0.414098}}
\put(0.4838,0.9997){\circle*{0.015}}
\put(0.451495,0.997642){\line(-36,-78){0.364397}}
\put(0.4515,0.9976){\circle*{0.015}}
\put(0.419385,0.993458){\line(-31,-80){0.313168}}
\put(0.4194,0.9935){\circle*{0.015}}
\put(0.387612,0.987205){\line(-26,-82){0.260626}}
\put(0.3876,0.9872){\circle*{0.015}}
\put(0.356311,0.978909){\line(-21,-84){0.206990}}
\put(0.3563,0.9789){\circle*{0.015}}
\put(0.325613,0.968603){\line(-15,-85){0.15}}
\put(0.3256,0.9686){\circle*{0.015}}
\put(0.295646,0.956333){\line(-10,-86){0.099}}
\put(0.2956,0.9563){\circle*{0.015}}
\put(0.266536,0.942148){\line(-4,-86){0.040}}
\put(0.2665,0.9421){\circle*{0.015}}
\put(0.238406,0.928108){\line(1,-86){0.0104}}
\put(0.2384,0.9261){\circle*{0.015}}
\put(0.211372,0.908282){\line(7,-86){0.069604}}
\put(0.2114,0.9083){\circle*{0.015}}
\put(0.185550,0.888743){\line(12,-85){0.12}}
\put(0.1855,0.8887){\circle*{0.015}}
\put(0.161046,0.867573){\line(18,-84){0.179833}}
\put(0.1610,0.8676){\circle*{0.015}}
\put(0.137964,0.844862){\line(23,-83){0.23}}
\put(0.1380,0.8449){\circle*{0.015}}
\put(0.116400,0.820704){\line(29,-81){0.287048}}
\put(0.1164,0.8207){\circle*{0.015}}
\put(0.096445,0.795201){\line(34,-79){0.338960}}
\put(0.0964,0.7952){\circle*{0.015}}
\put(0.078183,0.768460){\line(39,-77){0.389452}}
\put(0.0782,0.7685){\circle*{0.015}}
\put(0.061691,0.740593){\line(88,-47){0.884180}}
\put(0.0617,0.7406){\circle*{0.015}}
\put(0.047036,0.711717){\line(91,-41){0.912545}}
\put(0.0470,0.7117){\circle*{0.015}}
\put(0.034282,0.681952){\line(94,-35){0.937084}}
\put(0.0343,0.6820){\circle*{0.015}}
\put(0.023481,0.651425){\line(96,-29){0.957692}}
\put(0.0235,0.6514){\circle*{0.015}}
\put(0.014679,0.620262){\line(97,-22){0.974283}}
\put(0.0147,0.6203){\circle*{0.015}}
\put(0.007912,0.588595){\line(99,-16){0.986787}}
\put(0.0079,0.5886){\circle*{0.015}}
\put(0.003209,0.556557){\line(100,-10){0.995153}}
\put(0.0032,0.5566){\circle*{0.015}}
\put(0.000590,0.524281){\line(100,-3){0.999345}}
\put(0.0006,0.5243){\circle*{0.015}}
\put(0.000066,0.491903){\line(100,3){0.999345}}
\put(0.0001,0.4919){\circle*{0.015}}
\put(0.001638,0.459560){\line(100,10){0.995153}}
\put(0.0016,0.4596){\circle*{0.015}}
\put(0.005301,0.427386){\line(99,16){0.986787}}
\put(0.0053,0.4274){\circle*{0.015}}
\put(0.011039,0.395516){\line(97,22){0.974283}}
\put(0.0110,0.3955){\circle*{0.015}}
\put(0.018827,0.364085){\line(96,29){0.957692}}
\put(0.0188,0.3641){\circle*{0.015}}
\put(0.028634,0.333224){\line(94,35){0.937084}}
\put(0.0286,0.3332){\circle*{0.015}}
\put(0.040418,0.303062){\line(91,41){0.912545}}
\put(0.0404,0.3031){\circle*{0.015}}
\put(0.054130,0.273726){\line(88,47){0.884180}}
\put(0.0541,0.2737){\circle*{0.015}}
\put(0.069711,0.245340){\line(43,-25){0.430289}}
\put(0.0697,0.2453){\circle*{0.015}}
\put(0.087098,0.218021){\line(45,-22){0.445267}}
\put(0.0871,0.2180){\circle*{0.015}}
\put(0.106216,0.191886){\line(46,-19){0.458378}}
\put(0.1062,0.1919){\circle*{0.015}}
\put(0.126986,0.167042){\line(47,-16){0.469566}}
\put(0.1270,0.1670){\circle*{0.015}}
\put(0.149321,0.143596){\line(48,-13){0.478785}}
\put(0.1493,0.1436){\circle*{0.015}}
\put(0.173126,0.121644){\line(49,-10){0.485995}}
\put(0.1731,0.1216){\circle*{0.015}}
\put(0.198303,0.101279){\line(49,-6){0.491167}}
\put(0.1983,0.1013){\circle*{0.015}}
\put(0.224745,0.082586){\line(49,-3){0.494279}}
\put(0.2247,0.0826){\circle*{0.015}}
\put(0.252341,0.065644){\line(50,0){0.495318}}
\put(0.2523,0.0656){\circle*{0.015}}
\put(0.280976,0.050524){\line(49,3){0.494279}}
\put(0.2810,0.0505){\circle*{0.015}}
\put(0.310530,0.037289){\line(49,6){0.491167}}
\put(0.3105,0.0373){\circle*{0.015}}
\put(0.340879,0.025995){\line(49,10){0.485995}}
\put(0.3409,0.0260){\circle*{0.015}}
\put(0.371894,0.016690){\line(48,13){0.478785}}
\put(0.3719,0.0167){\circle*{0.015}}
\put(0.403448,0.009411){\line(47,16){0.469566}}
\put(0.4034,0.0094){\circle*{0.015}}
\put(0.435406,0.004190){\line(46,19){0.458378}}
\put(0.4354,0.0042){\circle*{0.015}}
\put(0.467635,0.001049){\line(45,22){0.445267}}
\put(0.4676,0.0010){\circle*{0.015}}
\put(0.500000,0.000000){\line(43,25){0.430289}}
\put(0.5000,0.0000){\circle*{0.015}}
\put(0.532365,0.001049){\line(39,77){0.389452}}
\put(0.5324,0.0010){\circle*{0.015}}
\put(0.564594,0.004190){\line(34,79){0.338960}}
\put(0.5646,0.0042){\circle*{0.015}}
\put(0.596552,0.009411){\line(29,81){0.287048}}
\put(0.5966,0.0094){\circle*{0.015}}
\put(0.628106,0.016690){\line(23,83){0.23}}
\put(0.6281,0.0167){\circle*{0.015}}
\put(0.659121,0.025995){\line(18,84){0.179833}}
\put(0.6591,0.0260){\circle*{0.015}}
\put(0.689470,0.037289){\line(12,85){0.12}}
\put(0.6895,0.0373){\circle*{0.015}}
\put(0.719024,0.050524){\line(7,86){0.069604}}
\put(0.7190,0.0505){\circle*{0.015}}
\put(0.757659,0.928644){\line(-1,-86){0.0102}}
\put(0.7477,0.0656){\circle*{0.015}}
\put(0.775255,0.082586){\line(-4,86){0.040}}
\put(0.7753,0.0826){\circle*{0.015}}
\put(0.801697,0.101279){\line(-10,86){0.10}}
\put(0.8017,0.1013){\circle*{0.015}}
\put(0.826874,0.121644){\line(-15,85){0.15}}
\put(0.8269,0.1216){\circle*{0.015}}
\put(0.850679,0.143596){\line(-21,84){0.206990}}
\put(0.8507,0.1436){\circle*{0.015}}
\put(0.873014,0.167042){\line(-26,82){0.260626}}
\put(0.8730,0.1670){\circle*{0.015}}
\put(0.893784,0.191886){\line(-31,80){0.313168}}
\put(0.8938,0.1919){\circle*{0.015}}
\put(0.912902,0.218021){\line(-36,78){0.364397}}
\put(0.9129,0.2180){\circle*{0.015}}
\put(0.930289,0.245340){\line(-41,75){0.414098}}
\put(0.9303,0.2453){\circle*{0.015}}
\put(0.9459,0.2737){\circle*{0.015}}
\put(0.9596,0.3031){\circle*{0.015}}
\put(0.9714,0.3332){\circle*{0.015}}
\put(0.9812,0.3641){\circle*{0.015}}
\put(0.9890,0.3955){\circle*{0.015}}
\put(0.9947,0.4274){\circle*{0.015}}
\put(0.9984,0.4596){\circle*{0.015}}
\put(0.9999,0.4919){\circle*{0.015}}
\put(0.9994,0.5243){\circle*{0.015}}
\put(0.9968,0.5566){\circle*{0.015}}
\put(0.9921,0.5886){\circle*{0.015}}
\put(0.9853,0.6203){\circle*{0.015}}
\put(0.9765,0.6514){\circle*{0.015}}
\put(0.9657,0.6820){\circle*{0.015}}
\put(0.9530,0.7117){\circle*{0.015}}
\put(0.9383,0.7406){\circle*{0.015}}
\put(0.9218,0.7685){\circle*{0.015}}
\put(0.9036,0.7952){\circle*{0.015}}
\put(0.8836,0.8207){\circle*{0.015}}
\put(0.8620,0.8449){\circle*{0.015}}
\put(0.8390,0.8676){\circle*{0.015}}
\put(0.8145,0.8887){\circle*{0.015}}
\put(0.7886,0.9083){\circle*{0.015}}
\put(0.7616,0.9261){\circle*{0.015}}
\put(0.7335,0.9421){\circle*{0.015}}
\put(0.7044,0.9563){\circle*{0.015}}
\put(0.6744,0.9686){\circle*{0.015}}
\put(0.6437,0.9789){\circle*{0.015}}
\put(0.6124,0.9872){\circle*{0.015}}
\put(0.5806,0.9935){\circle*{0.015}}
\put(0.5485,0.9976){\circle*{0.015}}
\put(0.5162,0.9997){\circle*{0.015}}
\end{picture}
\qquad\qquad
\setlength{\unitlength}{4.5cm}
\begin{picture}(1.00, 1.00)
\put(0.467965,0.998973){\line(-40,-76){0.395536}}
\put(0.4680,0.9990){\circle*{0.015}}
\put(0.404421,0.990780){\line(-30,-80){0.30}}
\put(0.4044,0.9908){\circle*{0.015}}
\put(0.342446,0.974528){\line(-19,-83){0.190287}}
\put(0.3424,0.9745){\circle*{0.015}}
\put(0.283058,0.950484){\line(-8,-85){0.081}}
\put(0.2831,0.9505){\circle*{0.015}}
\put(0.227233,0.919044){\line(3,-85){0.0305}}
\put(0.2272,0.9190){\circle*{0.015}}
\put(0.175886,0.880723){\line(14,-84){0.14}}
\put(0.1759,0.8807){\circle*{0.015}}
\put(0.129861,0.836150){\line(24,-82){0.243312}}
\put(0.1299,0.8362){\circle*{0.015}}
\put(0.089914,0.786058){\line(35,-78){0.346148}}
\put(0.0899,0.7861){\circle*{0.015}}
\put(0.056700,0.731269){\line(90,-43){0.900506}}
\put(0.0567,0.7313){\circle*{0.015}}
\put(0.030766,0.672683){\line(95,-31){0.948568}}
\put(0.0308,0.6727){\circle*{0.015}}
\put(0.012536,0.611260){\line(98,-19){0.981055}}
\put(0.0125,0.6113){\circle*{0.015}}
\put(0.002310,0.548012){\line(100,-6){0.997433}}
\put(0.0023,0.5480){\circle*{0.015}}
\put(0.000257,0.483974){\line(100,6){0.997433}}
\put(0.0003,0.4840){\circle*{0.015}}
\put(0.006409,0.420200){\line(98,19){0.981055}}
\put(0.0064,0.4202){\circle*{0.015}}
\put(0.020666,0.357736){\line(95,31){0.948568}}
\put(0.0207,0.3577){\circle*{0.015}}
\put(0.042794,0.297608){\line(90,43){0.900506}}
\put(0.0428,0.2976){\circle*{0.015}}
\put(0.072429,0.240804){\line(43,-24){0.427571}}
\put(0.0724,0.2408){\circle*{0.015}}
\put(0.109084,0.188255){\line(45,-18){0.454854}}
\put(0.1091,0.1883){\circle*{0.015}}
\put(0.152159,0.140825){\line(47,-12){0.474669}}
\put(0.1522,0.1408){\circle*{0.015}}
\put(0.200945,0.099293){\line(49,-6){0.486689}}
\put(0.2009,0.0993){\circle*{0.015}}
\put(0.254641,0.064341){\line(49,0){0.490718}}
\put(0.2546,0.0643){\circle*{0.015}}
\put(0.312366,0.036542){\line(49,6){0.486689}}
\put(0.3124,0.0365){\circle*{0.015}}
\put(0.373173,0.016353){\line(47,12){0.474669}}
\put(0.3732,0.0164){\circle*{0.015}}
\put(0.436061,0.004105){\line(45,18){0.454854}}
\put(0.4361,0.0041){\circle*{0.015}}
\put(0.500000,0.000000){\line(43,24){0.427571}}
\put(0.5000,0.0000){\circle*{0.015}}
\put(0.563939,0.004105){\line(35,78){0.346148}}
\put(0.5639,0.0041){\circle*{0.015}}
\put(0.626827,0.016353){\line(24,82){0.243312}}
\put(0.6268,0.0164){\circle*{0.015}}
\put(0.687634,0.036542){\line(14,84){0.14}}
\put(0.6876,0.0365){\circle*{0.015}}
\put(0.745359,0.064341){\line(3,85){0.0305}}
\put(0.7454,0.0643){\circle*{0.015}}
\put(0.799055,0.099293){\line(-8,85){0.081}}
\put(0.7991,0.0993){\circle*{0.015}}
\put(0.847841,0.140825){\line(-19,83){0.190287}}
\put(0.8478,0.1408){\circle*{0.015}}
\put(0.890916,0.188255){\line(-30,80){0.30}}
\put(0.8909,0.1883){\circle*{0.015}}
\put(0.927571,0.240804){\line(-40,76){0.395536}}
\put(0.9276,0.2408){\circle*{0.015}}
\put(0.9572,0.2976){\circle*{0.015}}
\put(0.9793,0.3577){\circle*{0.015}}
\put(0.9936,0.4202){\circle*{0.015}}
\put(0.9997,0.4840){\circle*{0.015}}
\put(0.9977,0.5480){\circle*{0.015}}
\put(0.9875,0.6113){\circle*{0.015}}
\put(0.9692,0.6727){\circle*{0.015}}
\put(0.9433,0.7313){\circle*{0.015}}
\put(0.9101,0.7861){\circle*{0.015}}
\put(0.8701,0.8362){\circle*{0.015}}
\put(0.8241,0.8807){\circle*{0.015}}
\put(0.7728,0.9190){\circle*{0.015}}
\put(0.7169,0.9505){\circle*{0.015}}
\put(0.6576,0.9745){\circle*{0.015}}
\put(0.5956,0.9908){\circle*{0.015}}
\put(0.5320,0.9990){\circle*{0.015}}
\end{picture}
\earr
\]
%\\
%\\ \\
\caption{%\small \parbox{10cm}
{Set partitions of $[6\cdot 2^i+1]$ for $i=1,2,3,4$ indexing
supercharacters of $\UT_{6\cdot 2^i+1}(2)$ with irreducible constituents that have values in $\QQ(\zeta_{2^{i+1}})\setminus \QQ(\zeta_{2^i})$.}}
\label{fig0}
%\earr
%\]
\end{figure}
\end{remark}

%\newpage

% This result justifies our title page, which displays four set partitions of the kind described.
 For the rest of this section, we fix an integer $r>1$ (not necessarily a prime power) and let $\Gtmp = \UT_{6r+1}(q)$ and $\fkntmp = \fkt_{6r+1}(q)$.  We shall prove the theorem by the following steps:
  \begin{enumerate}
 \item[1.] First, we will compute $\olfks_\lambda$ for a certain map $\lambda \in \fkntmp^*$.
 
 \item[2.] We will then identify a quotient of $\ols_\lambda$ isomorphic to the group $\mathrm{A}_{r+1}(q)$ defined in Section \ref{cmplx-chars}.
  
 \item[3.] We will then demonstrate that the supercharacter of $\ols_\lambda$ indexed by the restriction $\mu= \lambda \downarrow \olfks_\lambda$ is equal to the product of a linear supercharacter and a character obtained by inflating the supercharacter  of $\mathrm{A}_{r+1}(q)$ indexed by the map $\kappa \in \fka_{r+1}(q)^*$ defined by (\ref{kappa}).
 
 \end{enumerate}
   This will show that we can view the characters in the set $\Irr(\ols_\lambda,\chi_\mu)$ as products of a linear supercharacter with the $q^{r-1}$ linear constituents of $\chi_\kappa$ whose values are discussed in Proposition \ref{cmplx-constits}.  These characters become the irreducible constituents of $\xi_\lambda$ on induction to $\Gtmp$ by Theorem \ref{structural}, and the remarks following that corollary together with Proposition \ref{cmplx-constits} imply that some of the induced characters have values which lie in $\QQ(\zeta_{pr})$ but not $\QQ(\zeta_r)$.  

To begin this program, let us define the map $\lambda \in \fkntmp^*$ of interest.  If we view $\lambda$ as the matrix whose $(i,j)$th entry is $\lambda_{ij}\overset{\mathrm{def}}=\lambda(e_{ij})$, then $\lambda$ informally corresponds to the picture in Figure \ref{fig1}.  This diagram is meant to illustrate a $(6r+1)\times (6r+1)$ upper triangular matrix; the dark diagonal lines mark the positions $(i,j)$ where $\lambda_{ij} \neq 0$.  To achieve our result these nonzero entries can be arbitrary, but to make our computations neater we will set them all to be $\pm 1$.
 
 \begin{figure}[h]
 \[
 \barr{c} \\
  {%\small
\setlength{\unitlength}{8cm}
\begin{picture}(1.01, 1.01)
%\linethickness{0.001mm}
\color{hellgrau}

\put(0.08,1.015){$_r$}
\put(0.24,1.015){$_r$}
\put(0.33,1.015){$_1$}
\put(0.43,1.015){$_r$}
%\put(0.49,1.015){$_1$}
\put(0.59,1.015){$_r$}
\put(0.75,1.015){$_r$}
\put(0.91,1.015){$_r$}

\put(1.015,0.085){$_r$}
\put(1.015,0.245){$_r$}
\put(1.015,0.405){$_r$}
\put(1.015,0.565){$_r$}
\put(1.015,0.655){$_1$}
\put(1.015,0.755){$_r$}
\put(1.015,0.915){$_r$}

\thinlines
  \put(1, 0){\line(-1, 1){1}}
  \put(0, 1){\line(1, 0){1}}
    \put(1, 0){\line(0, 1){1}}

    \put(0.16,0.84){\line(0,1){0.16}}
        \put(0.16,0.84){\line(1,0){0.84}}
        
    \put(0.32,0.68){\line(0,1){0.32}}
        \put(0.32,0.68){\line(1,0){0.68}}
       \put(0.36,0.64){\line(1,0){0.64}}
       \put(0.36,0.64){\line(0,1){0.36}}
              
%    \put(0.48,0.52){\line(0,1){0.48}}
%        \put(0.48,0.52){\line(1,0){0.52}}
        
    \put(0.52,0.48){\line(0,1){0.52}}
    \put(0.52,0.48){\line(1,0){0.48}}
    
    \put(0.68,0.32){\line(0,1){0.68}} 
     \put(0.68,0.32){\line(1,0){0.32}} 
        
    \put(0.84,0.16){\line(0,1){0.84}} 
        \put(0.84,0.16){\line(1,0){0.16}} 

\color{black}
\thicklines

              \put(0.68,.84){\line(1,-1){0.16}}
	 \put(0.68,.84){\line(1,0){0.004}}
	  \put(0.68,.84){\line(0,-1){0.004}}
            \put(0.684,.84){\line(1,-1){0.156}}
              \put(0.68,.836){\line(1,-1){0.156}}
               \put(0.84,.68){\line(-1,0){0.004}}
	  \put(0.84,.68){\line(0,1){0.004}}
              
          \put(0.52,.84){\line(1,-1){0.16}}
	 \put(0.52,.84){\line(1,0){0.004}}
	  \put(0.52,.84){\line(0,-1){0.004}}
            \put(0.524,.84){\line(1,-1){0.156}}
              \put(0.52,.836){\line(1,-1){0.156}}
               \put(0.68,.68){\line(-1,0){0.004}}
	  \put(0.68,.68){\line(0,1){0.004}}

          \put(0.32,1){\line(1,-1){0.16}}
	 \put(0.32,1){\line(1,0){0.004}}
	  \put(0.32,1){\line(0,-1){0.004}}
            \put(0.324,1){\line(1,-1){0.156}}
              \put(0.32,.996){\line(1,-1){0.156}}
               \put(0.48,.84){\line(-1,0){0.004}}
	  \put(0.48,.84){\line(0,1){0.004}}	
	  
          \put(0.16,1){\line(1,-1){0.16}}
	 \put(0.16,1){\line(1,0){0.004}}
	  \put(0.16,1){\line(0,-1){0.004}}
            \put(0.164,1){\line(1,-1){0.156}}
              \put(0.16,.996){\line(1,-1){0.156}}
               \put(0.32,.84){\line(-1,0){0.004}}
	  \put(0.32,.84){\line(0,1){0.004}}
	  
	 \put(0.48,.68){\line(1,-1){0.20}}
	 \put(0.48,.68){\line(1,0){0.004}}
	  \put(0.48,.68){\line(0,-1){0.004}}
            \put(0.484,.68){\line(1,-1){0.196}}
              \put(0.48,.676){\line(1,-1){0.196}}
               \put(0.68,.48){\line(-1,0){0.004}}
	  \put(0.68,.48){\line(0,1){0.004}}	  
              
              \put(0.84,.48){\line(1,-1){0.16}}
	 \put(0.84,.48){\line(1,0){0.004}}
	  \put(0.84,.48){\line(0,-1){0.004}}
            \put(0.844,.48){\line(1,-1){0.156}}
              \put(0.84,.476){\line(1,-1){0.156}}
               \put(1,.32){\line(-1,0){0.004}}
	  \put(1,.32){\line(0,1){0.004}}

\end{picture}
}
%\\ 
%\text{Figure 1: $\lambda \in \fkt_{6r+1}(q)^*$}
 \earr
\]
\caption{A picture representing the $\FF_q$-linear map $\lambda \in \fkt_{6r+1}(q)^*$}
\label{fig1}
\end{figure}
To give a more precise definition, we briefly use the notation 
\[ \sigma(n;i,j) \overset{\mathrm{def}}= \sum_{k=1}^n e_{i+k,j+k}^* \in \fkntmp^*.\]
For the duration of this section, we define $\lambda \in \fkntmp^*$ by
 $ \lambda =\lambda' - \lambda''$ where 
 \[\ba \lambda ' &=  \sigma(r;0,2r) + \sigma(r;r,4r+1) + \sigma(r;3r+1,5r+1)+ \sigma(r+1;2r,3r) ,
\\
\lambda'' &=
  \sigma(r;0,r) + \sigma(r;r,3r+1)
 .
\ea \] Note that $\lambda'$ is quasi-monomial with shape given by the set partition in Theorem \ref{main}, and that $\lambda \in \lambda'\Gtmp$ by Lemma \ref{monomial}.  Comparing this definition to Figure \ref{fig1}, one observes that $\lambda$ has six ``pieces'' corresponding to subsets of positions on various diagonals; exactly one such subset has $r+1$ positions and the rest have $r$ positions.  It is not difficult to check that this definition may be recast as the piecewise formula
 \be\label{lambda} \lambda_{jk}=\lambda(e_{jk}) = \left\{\ba 
 &-1,&&\text{if }j=k-r \in [1,r] \\
 &1,&&\text{if }j=k-2r \in [1,r]\\
   &-1,&&\text{if }j=k-2r-1\in [r+1,2r]\\
    &1,&&\text{if }j=k-3r-1 \in [r+1,2r] \\
  &1,&&\text{if }j=k-r\in[2r+1,3r+1]\\
    &1,&&\text{if }j=k-2r \in [3r+2,4r+1]\\
    &0,&&\text{otherwise}.
 \ea\right.\qquad%\text{for }1\leq j<k\leq 6r+1.
 \ee
 % \be\label{lambda} \lambda_{jk}=\lambda(e_{jk}) = \left\{\ba 
% &-1,&&\text{if }j=k-r\text{ and }k \in [r+1,2r] \\
% &1,&&\text{if }j=k-2r\text{ and }k \in [2r+1,3r]\\
%  &1,&&\text{if }j=k-r\text{ and }k =[3r+1,4r+1]\\
%   &-1,&&\text{if }j=k-2r-1\text{ and }k \in [3r+2,4r+1]\\
%    &1,&&\text{if }j=k-3r-1\text{ and }k \in [4r+2,5r+1] \\
%    &1,&&\text{if }j=k-2r\text{ and }k \in [5r+1,6r+1]\\
%    &0,&&\text{otherwise}.
% \ea\right.\qquad%\text{for }1\leq j<k\leq 6r+1.
% \ee
This last identity is what we will use to actually compute $\lambda(X)$ for $X \in \fkntmp$; our arguments depend much more intuitively, however, on the visual representation of $\lambda$ given in Figure \ref{fig1}.

To compute and describe the subalgebras $\olfkl_\lambda$ and $\olfks_\lambda$ we require several technical definitions referring to subsets of positions in an upper triangular matrix.  We begin this lexicon by defining $\J$ as the set of all such positions:
\[\J = \{ (i,j) : i,j \in [6r+1] : i<j\}.\]  Our task is now to define the eleven subsets $\A,\B,\C,\D,\Z_1,\dots,\Z_7\subset \J$ corresponding to regions in a $(6r+1)\times(6r+1)$ upper triangular matrix highlighted in Figure \ref{fig2} below.

\def\imap{\tau}
 \definecolor{hellgrau}{gray}{0.5}
 
 \begin{figure}[h]
 \[
 \barr{c}\\
{%\small
\setlength{\unitlength}{8cm}
\begin{picture}(1.01, 1.01)

%\linethickness{0.001mm}
\color{hellgrau}

\put(0.08,1.015){$_r$}
\put(0.24,1.015){$_r$}
\put(0.33,1.015){$_1$}
\put(0.43,1.015){$_r$}
%\put(0.49,1.015){$_1$}
\put(0.59,1.015){$_r$}
\put(0.75,1.015){$_r$}
\put(0.91,1.015){$_r$}

\put(1.015,0.085){$_r$}
\put(1.015,0.245){$_r$}
\put(1.015,0.405){$_r$}
\put(1.015,0.565){$_r$}
\put(1.015,0.655){$_1$}
\put(1.015,0.755){$_r$}
\put(1.015,0.915){$_r$}

\thinlines
  \put(1, 0){\line(-1, 1){1}}
  \put(0, 1){\line(1, 0){1}}
    \put(1, 0){\line(0, 1){1}}

    \put(0.16,0.84){\line(0,1){0.16}}
        \put(0.16,0.84){\line(1,0){0.84}}
        
    \put(0.32,0.68){\line(0,1){0.32}}
        \put(0.32,0.68){\line(1,0){0.68}}
       \put(0.36,0.64){\line(1,0){0.64}}
       \put(0.36,0.64){\line(0,1){0.36}}
              
%    \put(0.48,0.52){\line(0,1){0.48}}
%        \put(0.48,0.52){\line(1,0){0.52}}
        
    \put(0.52,0.48){\line(0,1){0.52}}
    \put(0.52,0.48){\line(1,0){0.48}}
    
    \put(0.68,0.32){\line(0,1){0.68}} 
     \put(0.68,0.32){\line(1,0){0.32}} 
        
    \put(0.84,0.16){\line(0,1){0.84}} 
        \put(0.84,0.16){\line(1,0){0.16}} 

\color{black}
\thicklines

%\put(0.22, 0.91){$\Z_5$}        
% %       \put(0.16,1){\line(1,-1){0.16}}
%   %      \put(0.16,.84){\line(1,1){0.16}}
%   % \linethickness{0.5mm}
%    \put(0.16,.84){\line(1,0){0.16}}
%    \put(0.16,.84){\line(0,1){0.16}}
%        \put(0.16,1){\line(1,0){0.16}}
%            \put(0.32,.84){\line(0,1){0.16}}

            \put(0.40, 0.75){$\Z_1$}        
  % \put(0.32,0.84){\line(5,-4){0.20}}
  % \put(0.32,0.68){\line(5,4){0.20}} 
      % \linethickness{0.5mm}
    \put(0.32,.84){\line(1,0){0.20}}
    \put(0.32,.84){\line(0,-1){0.16}}
        \put(0.32,0.68){\line(1,0){0.20}}
            \put(0.52,.68){\line(0,1){0.16}}       
                    
%             \put(0.58, 0.75){$\Z_6$} 
%   %           \put(0.52,.84){\line(1,-1){0.16}}
%      %            \put(0.52,.68){\line(1,1){0.16}}          
%              \put(0.52,.84){\line(1,0){0.16}}
%                  \put(0.52,.68){\line(0,1){0.16}}   
%                   \put(0.52,.68){\line(1,0){0.16}}
%                  \put(0.68,.68){\line(0,1){0.16}}          
   
             \put(0.74, 0.39){$\Z_3$} 
   %           \put(0.68,.48){\line(1,-1){0.16}}
      %            \put(0.68,.32){\line(1,1){0.16}}          
              \put(0.68,.48){\line(1,0){0.16}}
                  \put(0.68,.32){\line(0,1){0.16}}   
                   \put(0.68,.32){\line(1,0){0.16}}
                  \put(0.84,.32){\line(0,1){0.16}}

          \put(0.705,.72){$\Z_2$}
              \put(0.68,.84){\line(1,-1){0.16}}
   %         \put(0.68,.68){\line(1,1){0.08}}
     %    \put(0.68,.76){\line(1,-1){0.08}}
            \put(0.68,.68){\line(1,0){0.16}}

          \put(0.865,.36){$\Z_4$}
              \put(0.84,.48){\line(0,-1){0.16}}
              \put(0.84,.48){\line(1,-1){0.16}}
    %        \put(0.68,.68){\line(1,1){0.08}}
      %    \put(0.68,.76){\line(1,-1){0.08}}
            \put(0.84,.32){\line(1,0){0.16}}

          \put(0.615,.42){$\Z_7$}
              \put(0.68,.48){\line(0,-1){0.16}}
              \put(0.52,.48){\line(1,-1){0.16}}
  %          \put(0.68,.48){\line(-1,-1){0.08}}
    %      \put(0.60,.48){\line(1,-1){0.08}}
            \put(0.52,.48){\line(1,0){0.16}}

          \put(0.615,.78){$\Z_6$}
              \put(0.68,.84){\line(0,-1){0.16}}
              \put(0.52,.84){\line(1,-1){0.16}}
  %          \put(0.68,.84){\line(-1,-1){0.08}}
    %      \put(0.60,.84){\line(1,-1){0.08}}
            \put(0.52,.84){\line(1,0){0.16}}

          \put(0.255,.94){$\Z_5$}
              \put(0.32,1){\line(0,-1){0.16}}
              \put(0.16,1){\line(1,-1){0.16}}
  %          \put(0.32,1){\line(-1,-1){0.08}}
    %      \put(0.24,1){\line(1,-1){0.08}}
            \put(0.16,1){\line(1,0){0.16}}

    \put(0.545,.72){$\A$}
              \put(0.52,.84){\line(1,-1){0.16}}
   %         \put(0.52,.68){\line(1,1){0.08}}
     %    \put(0.52,.76){\line(1,-1){0.08}}
            \put(0.52,.68){\line(0,1){0.16}}
            \put(0.52,.68){\line(1,0){0.16}}

          \put(0.255,.78){$\B$}
              \put(0.32,.84){\line(0,-1){0.16}}
              \put(0.16,.84){\line(1,-1){0.16}}
  %          \put(0.32,.84){\line(-1,-1){0.08}}
    %      \put(0.24,.84){\line(1,-1){0.08}}
            \put(0.16,.84){\line(1,0){0.16}}
            
\put(0.185,.88){$\C$}
              \put(0.16,1){\line(1,-1){0.16}}
   %         \put(0.16,.84){\line(1,1){0.08}}
     %    \put(0.16,.92){\line(1,-1){0.08}}
            \put(0.16,.84){\line(0,1){0.16}}
            \put(0.16,0.84){\line(1,0){0.16}}

          \put(0.095,.94){$\D$}
              \put(0.16,1){\line(0,-1){0.16}}
              \put(0.0,1){\line(1,-1){0.16}}
  %          \put(0.16,1){\line(-1,-1){0.08}}
    %      \put(0.08,1){\line(1,-1){0.08}}
            \put(0.0,1){\line(1,0){0.16}}

   \put(0.328,0.90){$_\D$}
            \put(0.36,.96){\line(-1,1){0.04}}
            \put(0.32,.96){\line(0,-1){0.12}}
             \put(0.36,.96){\line(0,-1){0.12}}
              \put(0.32,.84){\line(1,0){0.04}}
%\put(0.32,.90){\line(1,0){0.04}}

%         \put(0.32,1){\line(1,-1){0.16}}
  %            \put(0.48,.68){\line(1,-1){0.20}}

            \put(0.545,.52){$\A'$}
              \put(0.52,.64){\line(1,-1){0.16}}
   %         \put(0.52,.48){\line(1,1){0.08}}
     %    \put(0.52,.56){\line(1,-1){0.08}}
            \put(0.52,.48){\line(1,0){0.16}}
     
          \put(0.455,.58){$\B'$}
              \put(0.52,.64){\line(0,-1){0.16}}
              \put(0.36,.64){\line(1,-1){0.16}}
  %          \put(0.52,.64){\line(-1,-1){0.08}}
    %      \put(0.44,.64){\line(1,-1){0.08}}
            \put(0.36,.64){\line(1,0){0.16}}

\put(0.385,.86){$\C'$}
              \put(0.36,.96){\line(1,-1){0.12}}
   %         \put(0.36,.80){\line(1,1){0.08}}
     %    \put(0.36,.88){\line(1,-1){0.08}}
            \put(0.36,.84){\line(0,1){0.12}}
            \put(0.36,0.84){\line(1,0){0.12}}
            \put(0.415,.66){$_{\C'}$}
   \put(0.36,0.64){\line(0,1){0.04}}
   \put(0.48,0.68){\line(1,-1){0.04}}
   \put(0.36,0.68){\line(1,0){0.12}}
      \put(0.36,0.64){\line(1,0){0.12}}            
\end{picture}
}
%\\ 
%\text{Figure 2: $\A,\B,\C,\D,\Z_1,\dots,\Z_7 \subset \J$}
 \earr
\]
\caption{$\A,\B,\C,\D,\Z_1,\dots,\Z_7 \subset \J$ are sets of positions above the diagonal in a matrix}
\label{fig2}
\end{figure}
%
%
%
%
%
%
%
%\[\]
%\setlength{\unitlength}{10cm}
%\begin{picture}(1, 1)
%
%%\linethickness{0.001mm}
%%\color{hellgrau}
%
%  \put(1, 0){\line(-1, 1){1}}
%  \put(0, 1){\line(1, 0){1}}
%    \put(1, 0){\line(0, 1){1}}
%
%    \put(0.16,0.84){\line(0,1){0.16}}
%        \put(0.16,0.84){\line(1,0){0.84}}
%        
%    \put(0.32,0.68){\line(0,1){0.32}}
%        \put(0.32,0.68){\line(1,0){0.68}}
%       \put(0.36,0.64){\line(1,0){0.64}}
%        
%    \put(0.48,0.52){\line(0,1){0.48}}
%%        \put(0.48,0.52){\line(1,0){0.52}}
%        
%    \put(0.52,0.48){\line(0,1){0.52}}
%    \put(0.52,0.48){\line(1,0){0.48}}
%    
%    \put(0.68,0.32){\line(0,1){0.68}} 
%     \put(0.68,0.32){\line(1,0){0.32}} 
%        
%    \put(0.84,0.16){\line(0,1){0.84}} 
%        \put(0.84,0.16){\line(1,0){0.16}} 
%        
%        \put(0.16,1){\line(1,-1){0.16}}
%           \put(0.32,1){\line(1,-1){0.16}}
%              \put(0.52,.84){\line(1,-1){0.16}}
%              \put(0.68,.84){\line(1,-1){0.16}}
%              \put(0.48,.68){\line(1,-1){0.20}}
%              \put(0.84,.48){\line(1,-1){0.16}}
%   
%\end{picture}
% 
As indicated by our picture, these subsets for the most part correspond to blocks of adjacent positions which lie inside triangles or rectangles.  
This diagram is somewhat imprecise; among other deficiencies, it does not clearly indicate how the sets in question include positions on various diagonals.  However, this picture will serve as a valuable heuristic in what follows.
To state our definitions in more adequate detail, we adopt the following notation: let 
\[
 \ba \block(n;x,y) &= \{ (x+i,y+j) : i,j \in [n] \}, \\
\lt(n;x,y) &= \{ (x+j,y+i) : i,j \in [n],\ i<j \}, \\
\ut(n;x,y) &= \{ (x+i,y+j) : i,j \in [n],\ i<j\},\ea\qquad\text{for nonnegative integers $n,x,y$}.\] 
Thus $\block(n;x,y)$ is the $n$-by-$n$ square of positions containing $(x+1,y+1)$ and $(x+n,y+n)$, and $\lt(n;x,y)$ and $\ut(n;x,y) $ are the subsets of $\block(n;x,y)$ consisting of the positions strictly below the diagonal and strictly above the diagonal.  Using these notations, we define
%\[ \ba \A &= \{ (j+r,i+3r+1) : i,j \in [r],\ i<j\}, \\
%\B &= \{ (i+r,j+r) : i,j \in [r],\ i<j\}, \\
%\C &= \{ (j,i+r) : i,j \in [r],\ i<j\}, \\
%\ea\]
%
\[ \ba 
&\ba 
\A&= \lt(r;r,3r+1), \\
\B&= \ut(r;r,r), \\
\C&=\lt(r;0,r), \\
\ea\qquad\ba
\A'&=\lt(r;2r+1,3r+1),\\
\B'&=\ut(r;2r+1,2r+1),\\
\C' &= \lt(r-1;1,2r+1) \cup \{ (2r+1,2r+i): i \in [2,r]\},\ea
\\
\\[-10pt]
&\ba
\D&=\ut(r;0,0) \cup \{ (i,2r+1) : i \in [2,r]\},
\ea
\ea
\] and 
\[
\ba
&
 \ba \Z_1 & = \block(r;r,2r) \cup \{ (2r+i,3r+1) : i \in [r]\}, \\
 \Z_2 & =  \lt(r;r,4r+1),\\
  \Z_3 &= \block(r;3r+1,4r+1), \ea\qquad\ba
  \Z_5 &=\block(r;0,r) \setminus \C, \\
  \Z_6 & = \block(r;r,3r+1) \setminus \A, \\
   \Z_7 &= \ut(r;3r+1,3r+1).\ea \\
&\ba \Z_4 &= \lt(r;3r+1,5r+1), \\ 
 \ea\ea
 \] These formulas at first glance appear forbiddingly technical, but the definitions are easily interpreted with the aid of our picture above.  In particular, one observes that the sets are all disjoint and correspond to the regions in Figure \ref{fig2}.  Let 
 \[ \Z = \Z_1 \cup \Z_2 \cup \Z_3 \cup \Z_4\qquad\text{and}\qquad \Z' = \Z_5\cup \Z_6 \cup \Z_7.\]  It is apparent from Figures 1 and 2 that the sets $\cL_{\lambda'}$ and $\cS_{\lambda'}$ defined in Lemma \ref{monomial} %corresponding to the quasi-monomial map $\lambda'$
  are given by
 \be\label{monomial-app}\ba \cL_{\lambda'} &= \A \cup \A' \cup \B\cup\B' \cup \C\cup\C' \cup\D\cup\Z \cup\Z', \\
 \cS_{\lambda'} &= \Z.\ea\ee  Thus we may immediately compute $\fk l_\lambda^1$ and $\fk s_\lambda^1$ from Lemma \ref{monomial}.
%  the union of all these sets, then it follows from Lemma \ref{monomial} that 
% \be\label{right-orbit-obs} \lambda \Gtmp =\lambda' \Gtmp = \lambda' + \FF_q\spanning\{ e_{ij}^*: (i,j) \in \cS\}.\ee  This is apparent from Figure \ref{fig2} if not the actual definitions.

The subalgebras $\fk l_\lambda^i,\fk s_\lambda^i$ defined in Section \ref{xi} will consist of elements $X \in \fkntmp$ such that $X_\alpha =0$ for certain positions $\alpha \in \J$ and such that $X_{\alpha_1} = \dots = X_{\alpha_k}$ for certain positions $\alpha_1,\dots,\alpha_k \in \J$.  A succinct way of stating conditions of the second type is to define a map $\imap: \J \to \J$ and then stipulate that $X_\alpha = X_{\imap(\alpha)}$ for all $\alpha \in \J$.  This motivates our next and last definition:
let $\imap : \A\cup \B \cup \C \cup \D \to \A'\cup\B'\cup\C'\cup \D$ be the map given by
 \[\ba  \imap(i,j) &= (i+r+1,j),&&\qquad\text{for }(i,j) \in \A, \\% \\[-10pt]
 \imap(i,j) &= (i+r+1,j+r+1),&&\qquad\text{for }(i,j) \in \B, \\ %\\[-10pt]
 \imap(i,j) &= \left\{\ba
 &
 (i+1,j+r+1),&&\quad\text{if }i<r, \\
&
 (2r+1,j+r+1),&&\quad\text{if }i=r, 
 \ea\right.
 &
 &
 \qquad\text{for }(i,j) \in \C, \\ %\\[-10pt]
 \imap(i,j) &= \left\{\ba
 &
 (i+1,j+1),&&\quad\text{if }j<r, \\
 &
 (i+1,2r+1),&&\quad\text{if }j=r, \\
 &
 (1,r+2-i),&&\quad\text{if }j=2r+1, 
 \ea\right.
 &
 &
 \qquad\text{for }(i,j) \in \D. 
 \ea\]  
 Comparing this formula with Figure \ref{fig2} makes things much more comprehensible.  We in particular note the following.
\begin{remarks}

\begin{enumerate}
\item[]
\item[(i)] Observe that $\imap$ is injective with $\imap(\A) =\A'$, $\imap(\B)=\B'$, $\imap(\C)=\C'$, and $\imap(\D) = \D$.  

\item[(ii)] Note further that $\imap$ is ``orientation-preserving'' on $\A\cup \B\cup\C$, in the sense that $\tau :\X \to \X'$ for $\X = \A,\B,\C$ is the unique bijection which preserves the relative locations of any two positions (so that if $(i,j) \in \A$ is to the left of $(k,\ell)\in\A$ then $\tau(i,j)\in\A'$ is to the left of $\tau(k,\ell)\in\A'$, for example).

%
%Given a set of positions $\X \subset \J$ occupying $m$ distinct rows $i_1<\dots<i_m$ and $n$ distinct columns $j_1<\dots<j_n$, let $s_\X: \X \to [m]\times [n]$ be the map defined by $(i_x,j_y) \mapsto (x,y)$.  Then for $\X = \A$, $\B$, $\C$ we have $s_{\X'}\circ \imap = s_\X$.  

\item[(iii)] If $X \in \fkntmp$ has $X_{\alpha} = X_{\imap(\alpha)}$ for all $\alpha \in \cD$ then $X_{i,2r+1} = X_{i-1,r} =X_{i-2,r-1}= \dots = X_{1,r-i+2}$ for all $i\in[2,r]$.  Thus, if $\fka \subset \fkn$ is the subspace
\be\label{fka} \fka = \{ X \in \fkn : X_{\alpha} = X_{\imap(\alpha)} \text{ if }\alpha \in \D\text{ and }X_\alpha =0\text{ if }\alpha \notin \D \} \oplus \FF_q\spanning\{ e_{1,2r+1}\}\ee then $\fka$ is a subalgebra naturally isomorphic to the algebra $\fka_{r+1}(q)$ defined in Section \ref{cmplx-chars}. The map $\fka \to \fka_{r+1}(q)$ defined by
$X \mapsto Y$ where 
\[  Y_{ij} =\left\{\barr{ll} X_{ij},&\text{if $j \leq r,$} \\
  X_{i,2r+1},&\text{if }j=r,\earr\right.\qquad\text{for $i,j \in [r+1]$}\] 
  gives an isomorphism.
 
 \end{enumerate}
 \end{remarks}

\def\S{\mathcal{S}}
\def\T{\mathcal{T}}

 With these definitions and remarks, we may now state the following technical lemma.  The proof of this is tedious but, with proper organization, is not as difficult as it might appear. Notably our proof does not require a computer, 
 and this makes the results in this section  apparently the first statements concerning exotic character values of $\UT_n(q)$ (see, for example, \cite{E,IK1,IK2,IK05,VeraLopez2004}) which can be derived entirely by hand.

 \begin{lemma}\label{technical}  Fix an integer $r>1$ and let $\fkn = \fkt_{6r+1}(q)$.  If $\lambda \in \fkntmp^*$ is defined by (\ref{lambda}), then
  \[\ba \fk l_\lambda^1 &= \{ X \in \fkn : %X_\alpha = X_{\imap(\alpha)} \text{ for all }\alpha \in \J \text{ and }
 X_{\alpha} = 0\text{ if } \alpha  \in \A\cup\A'\cup \B\cup \B' \cup \C\cup\C' \cup \D \cup \Z \cup \Z' \}, \\
\fk l_\lambda^2 &= \{ X \in \fkn : X_\alpha = X_{\imap(\alpha)} \text{ if }\alpha \in \A \text{ and }X_{\alpha} = 0\text{ if } \alpha  \in \B\cup\B' \cup \C \cup\C'\cup \D \cup \Z \}, \\
\olfkl_\lambda=\fk l_\lambda^3 &= \{ X \in \fkn : X_\alpha = X_{\imap(\alpha)} \text{ if }\alpha \in \A\cup\B\cup\C  \text{ and }X_{\alpha} = 0\text{ if } \alpha  \in  \D \cup \Z \},
\\[-10pt]
\\
 \fk s_\lambda^1 &= \{ X \in \fkn : X_\alpha = 0\text{ if }\alpha \in \Z \}, \\
\fk s_\lambda^2 &= \{ X \in \fkn : X_{\alpha} = X_{\imap(\alpha)}\text{ if }\alpha \in \A\cup \B \text{ and } X_\alpha = 0\text{ if }\alpha \in \Z  \}, \\
\olfks_\lambda=\fk s_\lambda^3 &= \{ X \in \fkn : X_\alpha = X_{\imap(\alpha)} \text{ if }\alpha \in \A\cup \B\cup\C\cup\D  \text{ and } X_{\alpha} = 0\text{ if } \alpha  \in  \Z  \}. \ea\]
%and $\olfkl_\lambda=\fk l_\lambda^{3+i}= \fk l_\lambda^{3}$ and  $\olfks_\lambda=\fk s_\lambda^{3+i} = \fk s_\lambda^3$ for all $i >0$.  
Also, $(\lambda -e_{1,2r+1}^*)(XY) =0$ for all $X,Y \in \olfks_\lambda$.
 
 \end{lemma}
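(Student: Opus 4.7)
The plan is to compute $\fk l_\lambda^i$ and $\fk s_\lambda^i$ inductively, exploiting the crucial feature of the matrix $(\lambda_{jk})$ given by (\ref{lambda}): every column $l$ contains at most two nonzero entries, and two entries (of opposite signs) occur only for $l \in [3r+2, 4r+1]$, where the nonzero rows are $l-2r-1$ and $l-r$, separated vertically by exactly $r+1$. Since this offset $r+1$ is also the vertical displacement of $\imap$ on $\A$, the two-entry columns are precisely what generate the $\imap$-pairing conditions in the higher subspaces.

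For the base case, $\lambda'$ is quasi-monomial with $\cL_{\lambda'}$ and $\cS_{\lambda'}$ as in (\ref{monomial-app}), and $\lambda - \lambda' = -\lambda''$ has support contained in $\cL_{\lambda'}$ (the positions $(j,j+r)$ for $j\in[1,r]$ lie in $\Z_5$, and $(j,j+2r+1)$ for $j\in[r+1,2r]$ lie in $\Z_6$). Hence $\lambda \in \lambda' G$ by Lemma \ref{monomial}, so $\fk l_\lambda^1 = \fk l_{\lambda'}^1$ and $\fk s_\lambda^1 = \fk s_{\lambda'}^1$, and the first two claimed formulas follow at once. To iterate, recall that $X \in \fk l_\lambda^{i+1}$ iff $X \in \fk s_\lambda^i$ and $\lambda(XY) = 0$ for all $Y \in \fk s_\lambda^i$, with $\fk s_\lambda^{i+1}$ characterized analogously with $Y \in \fk l_\lambda^{i+1}$. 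Expanding $\lambda(XY) = \sum_{k,j,l} \lambda_{kl} X_{kj} Y_{jl}$ and treating it as a linear form in the free parameters of $Y$ yields: a single-entry column $l$ with $\lambda_{k_0,l} \neq 0$ forces $X_{k_0,j} = 0$, while a two-entry column $l \in [3r+2,4r+1]$ with $\lambda_{k_0,l} = -1$, $\lambda_{k_1,l} = 1$ ($k_1 - k_0 = r+1$) forces $X_{k_0,j} = X_{k_1,j}$, matching $X_\alpha = X_{\imap(\alpha)}$ for $\alpha \in \A$. The first iteration produces the pairing on $\A$ and zeros on $\B \cup \B' \cup \C \cup \C' \cup \D$, giving $\fk l_\lambda^2$; the computation of $\fk s_\lambda^2$ is analogous with the smaller test space $\fk l_\lambda^2$, reactivating the pairing on $\B$. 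Iterating once more propagates the pairing through $\C$ and $\D$, and a dimension count, or direct verification that no new constraints appear in a fourth iteration, confirms that the chain stabilizes at $d = 3$.

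For the final assertion, decompose any $X \in \olfks_\lambda$ as $X = X_L + X_A$, where $X_A \in \fka$ is the unique element of $\fka$ matching $X$ on $\D^+ = \D \cup \{(1,2r+1)\}$, and $X_L = X - X_A$. One checks $X_L \in \olfkl_\lambda$: it vanishes on $\D \cup \Z$ (noting $\Z \cap \D^+ = \varnothing$), and inherits the $\imap$-pairing of $X$ on $\A \cup \B \cup \C$ (since $\fka$ is supported in $\D^+$, disjoint from $\A \cup \B \cup \C$). Decomposing $Y$ similarly, three of the four cross-products vanish under $\lambda$: $\lambda(X_L Y_L) = \lambda(X_L Y_A) = 0$ because $X_L \in \fk l_\lambda^3$ right-annihilates $\fk s_\lambda^2 \supset \olfks_\lambda$ under $B_\lambda$, and $\lambda(X_A Y_L) = 0$ because $Y_L \in \fk l_\lambda^3$ is right-annihilated by $X_A \in \fk s_\lambda^3$. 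Thus $\lambda(XY) = \lambda(X_A Y_A)$. A direct computation then shows $X_A Y_A$ is supported on $\D^+$ (since any nonzero summand in $(X_A Y_A)_{ij}$ forces intermediate index $k \in [2,r]$), and inspection of (\ref{lambda}) reveals that the only nonzero $\lambda_{ij}$ with $(i,j) \in \D^+$ is $\lambda_{1,2r+1} = 1$; hence $\lambda(X_A Y_A) = (X_A Y_A)_{1,2r+1}$. A parallel check verifies $(XY)_{1,2r+1} = (X_A Y_A)_{1,2r+1}$, using that $X_{L,1,k} = 0$ for $k \in [2,r]$ (as $(1,k) \in \D$) and $Y_{L,k,2r+1} = 0$ for $k \in [r+1,2r]$ (as $(k,2r+1) \in \Z_1$), so that only the $\D$-indices $k \in [2,r]$ contribute, and there $X_{1,k} = X_{A,1,k}$ and $Y_{k,2r+1} = Y_{A,k,2r+1}$. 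Combining, $(\lambda - e_{1,2r+1}^*)(XY) = 0$.

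The main obstacle is the bookkeeping in the inductive step: each iteration must correctly distinguish which positions become zero-constrained versus paired via $\imap$, with the added complication that $\imap$ acts uniformly on $\A, \B, \C$ but acquires a wrap-around rule on $\D$ (sending $(i,2r+1) \to (1,r+2-i)$). Extra care is required along the ``boundary'' columns $l = 2r+1$ and $l = 3r+1$, where several cases of (\ref{lambda}) intersect and the constraints produce subtle redundancies that must be accounted for before the stated descriptions of $\fk l_\lambda^i$ and $\fk s_\lambda^i$ can be confirmed.
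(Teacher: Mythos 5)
Your overall approach---iterating the left-kernel computation for $B_\lambda$ starting from Lemma~\ref{monomial}, and reducing the final claim to the subalgebra $\fka$---is essentially the same as the paper's, and your decomposition $X = X_L + X_A$ for the last assertion is correct (the paper uses the direct-sum $\olfks_\lambda = \olfkl_\lambda \oplus \fk l_4^c$ instead and avoids the manual comparison of $(XY)_{1,2r+1}$ with $(X_AY_A)_{1,2r+1}$ by observing that the kernel identities hold with $e_{1,2r+1}^*$ in place of $\lambda$, but both work). However, the inductive step, which is the real content of the lemma, is not actually carried out, and the mechanism you describe is insufficient to produce it. Your column-by-column analysis of $\sum_k \lambda_{kl}X_{kj}$ correctly explains only the passage from $\fk s_\lambda^1$ to $\fk l_\lambda^2$: there the test space $\fk s_\lambda^1$ is still spanned by elementary matrices, and the two-entry columns $l\in[3r+2,4r+1]$ with row offset $r+1$ directly yield the $\imap$-pairing on $\A$, which is a pure row shift. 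For $i\ge 2$ the test space ($\fk l_\lambda^2$ for $\fk s_\lambda^2$, then $\fk s_\lambda^2$ for $\fk l_\lambda^3$, etc.) already carries $\imap$-pairing constraints, so the relevant test vectors are combinations $e_\alpha + e_{\imap(\alpha)}$ rather than elementary matrices, and the resulting conditions on $X$ are not of the simple single- or two-entry-column form you describe. This is precisely why the pairing propagates to $\B$, $\C$, $\D$, whose $\imap$ shifts both row and column (e.g.\ $(i,j)\mapsto(i+r+1,j+r+1)$ on $\B$, or the wrap-around on $\D$), something that cannot be read off one column of $\lambda$ at a time. Asserting that the later iterations are ``analogous'' or that the pairing ``propagates'' substitutes for, rather than performs, the verification; the paper fills exactly this gap with an explicit generating-set computation organized into twelve sub-cases ((a)--(d) for $i=2,3,4$), and that bookkeeping is the proof.
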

 
It may be helpful to observe that one could also write $\olfkl_\lambda = \{ X \in \olfks_\lambda : X_\alpha =0\text{ for }\alpha \in \D\}$, and that for $r=2$ and $n=13$ we are claiming % \[ \olfkl_\lambda ={\small\left\{ 
% \(\barr{ccccccccccccc}
% 0 & 0 & * & * & * & * & * & * & * & * & * & * & *\\
%   & 0 & c & * & 0 & * & * & * & * & * & * & * & *\\
%   &   & 0 & b & 0 & 0 & 0 & * & * & * & * & * & *\\
%  &  &  & 0 & 0 & 0 & 0 & a & * & 0 & * & * & *\\
%   &  &  &  & 0 & c & * & * & * & * & * & * & *\\
%  &  &  &  &  & 0 & b & * & * & * & * & * & *\\
%  &  &  &  &  &  & 0 & a & * & * & * & * & *\\
%  &  &  &  &  &  &  & 0 & * & 0 & 0 & * & *\\
%  &  &  &  &  &  &  &  & 0 & 0 & 0 & 0 & *\\
%  &  &  &  &  & &  & &  & 0 & *& * & *\\
%  &  &  &  &  & &  & &  &  &0 & * & *\\
%  &  &  &  &  & &  & &  &  & & 0 & *\\
%    &  &  &  &  & &  & &  &  &  &  & 0     \earr\)\right\}}
%\] and
\[
    \olfks_\lambda ={\small \left\{ 
 \(\barr{ccccccccccccc}
 0 & d & * & * & * & * & * & * & * & * & * & * & *\\
   & 0 & c & * & d & * & * & * & * & * & * & * & *\\
   &   & 0 & b & 0 & 0 & 0 & * & * & * & * & * & *\\
  &  &  & 0 & 0 & 0 & 0 & a & * & 0 & * & * & *\\
   &  &  &  & 0 & c & * & * & * & * & * & * & *\\
  &  &  &  &  & 0 & b & * & * & * & * & * & *\\
  &  &  &  &  &  & 0 & a & * & * & * & * & *\\
  &  &  &  &  &  &  & 0 & * & 0 & 0 & * & *\\
  &  &  &  &  &  &  &  & 0 & 0 & 0 & 0 & *\\
  &  &  &  &  & &  & &  & 0 & *& * & *\\
  &  &  &  &  & &  & &  &  &0 & * & *\\
  &  &  &  &  & &  & &  &  & & 0 & *\\
    &  &  &  &  & &  & &  &  &  &  & 0     \earr\)\right\},}
    \]  where the parameters $a,b,c,d$ correspond to the regions of the same letter.

\begin{proof}  Call the right hand sets above $\fk l_i$ and $\fk s_i$  for $i\leq 3$ and define $\fk l_4 = \fk l_3$ and $\fk s_4= \fk s_3$.  Recalling the discussion in Section \ref{xi}, the lemma then becomes  the claim that $\fk l_i = \fk l^i_\lambda$ and $\fk s_i=\fk s^i_\lambda$ for $i\leq 4$.  
It is immediate from Lemma \ref{monomial} and the observation (\ref{monomial-app})  that $\fk l_1 = \fk l_\lambda^1$ and $\fk s_1 = \fk s_\lambda^1$.   
For the other cases, define the following subspaces in $\fkntmp = \fkt_n(q)$:
\[ \ba \fk l_2^c &= \FF_q\spanning \left\{ e_\alpha : \alpha \in \A\cup \B \cup  \B'  \cup \C \cup  \C'  \cup \D\right\},
\\
\fk l_3^c &=\FF_q\spanning \left\{ e_\alpha : \alpha \in \C \cup \D \right\}, 
\\
\fk l_4^c &= \left\{X \in \fkn : X_{\alpha} = X_{\imap(\alpha)}\text{ if }\alpha \in \D \text{ and }X_\alpha =0\text{ if }\alpha\notin\D\right\},
\\[-10pt]
\\ \fk s_2^c &= \FF_q\spanning \left\{ e_\alpha : \alpha \in \A \cup \B\right\}, 
\\
\fk s_3^c &=\FF_q\spanning\left\{ e_\alpha : \alpha \in \C \cup \ut(r;0,0) \right\},
\\
\fk s_4^c & = 0,
\\[-10pt]
\\ \fk l_2' &=\FF_q\spanning \left\{ e_\alpha+e_{\imap(\alpha)} : \alpha \in \A\right\} \oplus \FF_q\spanning\left\{ e_\alpha : \alpha \in \Z'\right \}, \\
\fk l_3' &= \FF_q\spanning\left\{ e_\alpha + e_{\imap(\alpha)} : \alpha \in \B \cup \C\right\}, \\
\fk l_4' &= 0.
\ea\] 
One checks that %   Consulting Figure 3, it is clear that
$ \fk s_{i-1} = \fk l_{i} \oplus \fk l_i^c = \fk s_i \oplus \fk s_i^c $ and $\fk l_i = \fk l_i' \oplus \fk l_{i-1}$ for $i \in \{2,3,4\}$.  To prove the lemma, it thus suffices to show that if $i \in \{2,3,4\}$ then 
\begin{enumerate}
\item[(a)] For each $X \in \fk l_i'$ we have $\lambda(XY) = 0$ for all $Y \in \fk s_{i-1}$.
\item[(b)] For each nonzero $X \in \fk l_i^c$ we have $\lambda(XY) \neq 0$ for some $Y \in \fk s_{i-1}$.
\item[(c)] For each $X \in \fk s_i$ we have $\lambda(XY) = 0$ for all $Y \in \fk l_i'$.
\item[(d)] For each nonzero $X \in \fk s_i^c$ we have $\lambda(XY) \neq 0$ for some $Y \in \fk l_i'$.
\end{enumerate}
In particular, (a) and (b) together imply that $\fk l_i$ is the left kernel of the bilinear form $B_\lambda : (X,Y)\mapsto \lambda(XY)$ restricted to $\fk s_{i-1} \times \fk s_{i-1}$, and (c) and (d) together imply that $\fk s_i$ is the left kernel of $B_\lambda$ restricted to $\fk s_{i-1}\times \fk l_i$, and these statements mean that $\fk l_i = \fk l_\lambda^i$ and $\fk s_i = \fk s_\lambda^i$ as required.  We have three cases ($i=2,3,4$) which we treat in turn; the first is by far the most technical, but all are handled by straightforward, elementary considerations.

\begin{case}[Suppose $i=2$]
\end{case}
%For example, suppose $i=2$.
 \begin{enumerate}
 \item[(a)] Let $(j,k) \in \J-\Z$ and set $Y = e_{jk} \in \fk s_1$.  Elements of this form span $\fk s_1$, so to show that (a) holds we need only prove that $\lambda(XY) =0$ for all elements $X$ in a basis for $\fk l_2'$.  For this, we have two cases: 
 %To show that (a) holds, we have two cases:
\begin{enumerate}
\item[i.]  Suppose $X = e_\alpha +e_{\imap(\alpha)} \in \fk l_2'$ for some  $\alpha  \in \A$.  Then $XY= 0$ unless $\alpha = (i,j)$ for some $1\leq i < j$, and in this case we have by (\ref{lambda}) that %$\imap(\alpha) = (i+r+1,j)$ and  
\be\label{A-case}\lambda(XY) =\lambda_{ik} + \lambda_{i+r+1,k} 
= \left\{\barr{rl} 1-0=1,&\text{if }k=i+3r+1, \\
-1+1=0,&\text{if }k=i+2r+1, \\
 0-0=0,&\text{otherwise}.\earr\right.\ee The case $k=i+3r+1$ does not occur, since then $(i,j) \in \A$ would imply that $j \in 3r+1+[r]$ and $k \in 4r+1+[r]$, whence $(j,k) \in \Z_3 \subset \Z$, a contradiction.

\item[ii.] Suppose $X= e_\alpha \in \fk l_2'$ for some $\alpha \in \Z'$.  It follows from (\ref{lambda}) that if $\lambda_{ik} \neq 0$ 
% for some $i \in [6r+1]$ 
then either $(i,j) \notin \Z'$ or $(j,k) \in \Z$, and this observation suffices to show that $\lambda(XY)=0$.

\end{enumerate} 
The elements $X$ in (i) and (ii) span $\fk l_2'$, so (a) holds.
 
 \item[(b)]
Suppose $X \in \fk l_2^c$ is nonzero, so that $X_\alpha \neq 0$ for some $\alpha = (i,j) \in \A \cup \B \cup  \B'  \cup \C \cup  \C'  \cup \D$.  Let $Y = e_\beta$ where 
\be\label{beta} \beta = \left\{ \barr{ll} (j,i+2r+1),&\text{if }\alpha \in \A\cup \B, 
\\
\\[-5pt]
%(j,i+2r+1) \in \A,&\text{if }\alpha \in \B, \\
(j,i+r),&\text{if }\alpha \in  \B' \cup \C \cup\underbrace{ \{ (2r+1,2r+k) : k\in[2,r]\} }_{\subset  \C' } \cup \underbrace{\ut(r;0,0)}_{\subset \D}, 
\\
\\[-5pt]
%(j,i+r) \in \B,&\text{if }\alpha \in \C, \\
(j, i+2r),&\text{if }\alpha \in\underbrace{ \lt(r-1;1,2r+1)}_{\subset  \C' } \cup\underbrace{ \{ (i,2r+1) : i \in [2,r]\}}_{\subset \D} . \\
%(j,i+r),&\text{if }\alpha \in \subset  \C' , \\
%(j,i+r),&\text{if }\alpha \in \ut(r;0,0) \subset \D, \\
%(j,i+2r),&\text{if }\alpha \in \{ (i,2r+1) : i \in [2,r]\} \subset \D.
\earr\right.\ee  One checks that $\beta \in \Z_7 \cup \A$ in the first case; $\beta \in  \C'  \cup \B \cup  \B'  \cup \C$ in the second case; and $\beta \in  \B'  \cup  \C' $ in the third case, respectively.  Hence $Y \in \fk s_1$.  Since $X$ has no nonzero entries in the positions $\Z_1 \cup  \A' $, it follows from (\ref{lambda}) that $\lambda(XY) = \pm X_\alpha \neq 0$, as required.
%  If $X_\alpha \neq 0$ for $\alpha \in \S$, then it is not difficult to see that $\lambda(X e_\beta)\neq0$ for some $\beta \in \T$, where we take 
%\[ \ba \T &= \Z_7\text{ if }\S = \A,&&\quad  &\T &= \A\text{ if }\S = \B,&&\quad &\T &=  \A' \text{ if }\S =  \B' ,\\ \T &= \B\text{ if }\S = \C,&&\quad & \T &=  \B' \text{ if }\S =  \C' ,&&\quad &\T &= \C\cup  \C' \text{ if }\S = \D. \ea\]   In each case we have $e_\beta\in \fk s_1'$, which suffices to show that (b) holds.  

% 
%\[ \T = \left\{\barr{ll} \Z_7,&\text{if }\S = \A, \\
%\A,&\text{if }\S = \B, \\
% \A' ,&\text{if }\S =  \B' , \\
%\B,&\text{if }\S = \C, \\
% \B' ,&\text{if }\S =  \C' , \\
%\C \cup  \C' ,&\text{if }\S=\D.
%\earr\right.\]

\item[(c)] To show that (c) holds, we have again two cases:
\begin{enumerate}
\item[i.] Suppose $Y = e_\alpha +e_{\imap(\alpha)} \in \fk l_2'$ for some $\alpha  \in \A$.  If $(i,j) \in \J$ then it follows from (\ref{lambda}) that
\[ \lambda(e_{ij}Y) = \left\{\barr{rl} -1,&\text{if }\alpha = (j,i+2r+1), \\
1,&\text{if }\alpha = (j-r-1,i+2r+1), \\
1,&\text{if }\alpha = (j-r-1,i+r), \\
0,&\text{otherwise}.\earr\right.\]  The first case occurs only if $(i,j) \in \B$;  the second case occurs only if $(i,j) \in \Z_1$; and the third case occurs only if $(i,j) \in  \B' $.  
Furthermore, noting the definition of $\imap$ on $\B$, one finds using this identity that if $\beta \in \B$ then $\lambda(e_\beta Y) =-1$ if and only if $\lambda(e_{\imap(\beta)}Y)=1$.  Since every $X \in \fk s_2$ has $X_\beta = 0$ if $\beta \in \Z$ and $X_\beta = X_{\imap(\beta)}$ if $\beta \in \B$, we have  $\lambda(XY) = 0$ for all $X  \in \fk s_2$.

\item[ii.] If  $Y = e_\alpha \in \fk l_2'$ for some $\alpha \in \Z_5  \cup \Z_6 $, then it is easy to see that $\lambda(XY)=0$ for all $X \in \fkn$.  If  $Y = e_\alpha \in \fk l_2'$ for some $\alpha \in \Z_7$ and $(i,j) \in \J$ then 
\[ \lambda(e_{ij}Y) = \left\{ \barr{rl} -1,&\text{if }\alpha= (j,i+2r+1), \\
1,&\text{if }\alpha = (j,i+r), \\
0,&\text{otherwise}.\earr\right.\]  The first case occurs only if $(i,j) \in \A$ and the second case occurs only if $(i,j) \in  \A' $.    Furthermore,  it follows from this identity that if $\beta \in \A$ then $\lambda(e_\beta Y) =-1$ if and only if $\lambda(e_{\imap(\beta)}Y) = 1$.  Since every $X \in \fk s_2$ has $X_\beta = X_{\imap(\beta)}$ for $\beta \in \A$, we  again have  $\lambda(XY) = 0$ for all $X  \in \fk s_2$.

\end{enumerate}
The elements $Y$ in (i) and (ii) span $\fk l_2'$, so this suffices to prove (c).

\item[(d)]
Suppose $X \in \fk s_2^c$ is nonzero, so that $X_\alpha \neq 0$ for some $\alpha =(i,j) \in \A \cup \B$.  Let $Y = e_\beta$ with $\beta =(j,i+2r+1)\in \J$.  If $\alpha \in \A$ then $\beta \in \Z_7$ and if $\alpha \in \B$ then $\beta \in \A$, so $Y \in \fk l_2'$.  
Since $X$ has nonzero entries only in $\A \cup \B$, it follows from (\ref{lambda}) that $\lambda(XY) = \pm X_\alpha \neq 0$, as required.
%  then it is not difficult to see that $\lambda(X e_\beta) \neq 0$ for some $\beta \in \Z_7$, and if $X_\alpha \neq 0$ for $\alpha \in \B$ then $\lambda(X e_\beta) \neq 0$ for some $\beta \in \A$.  In both cases $e_\alpha \in \fk l_2@$, and this suffices to show that (d) holds.  
 
 \end{enumerate}

\begin{case}[Suppose $i=3$]
\end{case}

 \begin{enumerate}
 \item[(a)] It suffices to show that if $X = e_\alpha +e_{\imap(\alpha)} \in \fk l_3'$ for some $\alpha \in \A$, then $\lambda(XY) = 0$ for all $Y \in \fk s_2$.  Since $Y_{\beta} = 0$ for all $\beta \in \Z$ if $Y \in \fk s_2$, this follows directly from (\ref{A-case}), which shows that $\lambda(X e_{\beta}) = 0$ if $\beta \notin \Z_3\subset \Z$.%  We conclude that (a) holds since if $Y \in \fk s_2'$ then $Y_{jk} = 0$ for all $(j,k) \in \Z_3$.
 
 \item[(b)]
Suppose $X \in \fk l_3^c$ is nonzero, so that $X_\alpha \neq 0$ for some $\alpha=(i,j) \in \C\cup \D$.    Define 
\[ Y = \left\{\barr{lll} e_\beta + e_{\imap(\beta)},&\text{ for }\beta = (j,i+r) \in \B,&\text{if }\alpha \in \D\text{ and }j\neq 2r+1, \\
e_\beta,&\text{ for }\beta = (j,i+r) \in \C,&\text{if }\alpha \in \C, \\
e_\beta,&\text{ for }\beta=(2r+1,2r+i) \in  \C' ,&\text{if }\alpha \in \D\text{ and }j = 2r+1.\earr\right.\]In each case we have $Y \in \fk s_2$ and one checks that $\lambda(XY) = \pm X_\alpha \neq 0$, as required.

\item[(c)] To see that (c) holds we have two cases:
\begin{enumerate}
\item[i.] Suppose $Y = e_\beta +e_{ \imap(\beta)} \in \fk l_3'$ for some $\beta=(j,k) \in \B$.   Let $\gamma = (k-r,j)$ and observe that $\beta \in \B$ implies $\gamma \in \C$.  If $X \in \fkn$ then 
\[ \lambda(XY)  = \left\{\barr{rl} -X_{k-r,j} + X_{k-r+1,j+r+1},&\text{if }k\in r + [r-1], \\
-X_{r,j} + X_{2r+1,j+r+1},&\text{if }k=2r.\earr\right.\]  It follows from the definition of $\imap$ that $ \lambda(XY) = X_{\imap(\gamma)}-X_{\gamma}$.  If $X \in \fk s_3'$ then % by definition 
$X_{\gamma} =X_{\imap(\gamma)}$, which implies that $\lambda(XY) = 0$.% for all $X \in \fk s_3'$.  

\item[ii.] Alternatively suppose $Y = e_\gamma + \imap(e_\gamma) \in \fk l_3'$ for some $\gamma=(j,k) \in \C$.   Let $\delta = (k-r,j)$ and observe that $\gamma \in \C$ implies $\delta \in \D$.  If $X \in \fkntmp$ then 
\[ \lambda(XY)  = \left\{\barr{rl} -X_{k-r,j} + X_{k-r+1,j+1},&\text{if }j\in[ r-1] , \\
-X_{k-r,r} + X_{k-r+1,2r+1},&\text{if }j=r.\earr\right.\]  It again follows from the definition of $\imap$ that $ \lambda(XY) = X_{\imap(\delta)}-X_{\delta}$.  If $X \in \fk s_3$ then by definition $X_{\delta} =X_{\imap(\delta)}$, which implies that $\lambda(XY) = 0$.% for all $X \in \fk s_3'$.  
\end{enumerate}
The elements $Y$ in (i) and (ii) span $\fk l_3'$, so (c) holds.

\item[(d)]
Suppose $X \in \fk s_3^c$ is nonzero, so that $X_\alpha \neq 0$ for some $\alpha=(i,j) \in \C\cup  \ut^r_{0,0}$.  We may assume without loss of generality that $X$ has no nonzero entries to the right of column $j$.  Let $Y=e_\beta + e_{\imap(\beta)}$ where $\beta = (j,i+r)$.  If $\alpha \in \C$ then $\beta \in \B$ and if $\alpha \in \ut_{0,0}^r$, then $\beta \in \C$.  Thus in either case $Y \in \fk l_3'$ and $\imap(\beta)$ occurs in a row strictly below the row of $\beta$.  Since we assume that $X$ has no nonzero entries in any column to the right of $\alpha$, it follows that $XY = Xe_\beta$ so $\lambda(XY) =-X_\alpha \neq 0$ by (\ref{lambda}), as required.

\end{enumerate}

\begin{case}[Suppose $i=4$]
\end{case}

 \begin{enumerate}
 \item[(a)]  Since $\fk l_4'=0$, (a) holds trivially.
 
 \item[(b)] If $X \in \fk l_4^c$ is nonzero then $X_{1i}\neq 0$ for some $i\in [2,r]$.  If we take $Y \in \fk s_3$ to be the element 
 \[ Y =  e_{i,2r+1} + e_{i-1,r} + e_{i-2,r-1} + \dots + e_{1,r-i+2}\] then $\lambda(XY) = -X_{1i} \neq 0.$
 
%  The set $\fk d = \fk l_4^c \oplus \FF_q\spanning\{ e_{1,2r+1}\}$ is a subalgebra naturally isomorphic to $\fk a_r(q)$ in Section ?, and $\lambda \downarrow \fk d$ becomes identified via this isomorphism with the map $\lambda \in \fk a_n(q)^*$ in Section ?. . .
 
% Consequently . . . %If $X \in \fk l_4^c$ is nonzero then $X_{1i}\neq 0$ for some $i\in [2,r]$.  If we take $Y \in \fk s_3'$ to be the element $Y = e_{\alpha,\imap}$ for $\alpha = (i,2r+1) \in \D$ then 

\item[(c)]  Since $\fk l_4'=0$, (c) also holds trivially.

\item[(d)]  Since $\fk s_4^c=0$, (d) holds vacuously.

 \end{enumerate}
This analysis suffices to conclude all but the lemma's final claim, %that $\fk l_i' = \fk l_i$ and $\fk s_i' = \fk s_i$ for all $i \leq 4$.  Since $\fk l_3 = \fk l_4$ and $\fk s_3 = \fk s_4$, we have $\fk l_{3+i} = \fk l_3$ and $\fk s_{3+i} = \fk s_3$ for every $i>0$ .  
%
%To prove our final claim
 that $(\lambda - e_{1,2r+1}^*)(XY) = 0$ for all $X,Y \in \fk s_3$.  For this, recall that $\fk s_3 = \fk l_4 \oplus \fk l_4^c$.  By definition $\lambda(XY) =0$ for all $X \in \fk l_4$ and $Y \in \fk s_3$, and since $\fk s_3 = \fk s_4$, we also have by definition that $\lambda(XY)=0$ for all $X \in \fk l_4^c \subset \fk s_4$ and $Y \in \fk l_4$.  It is not difficult to see that the preceding sentence remains true if $\lambda$ is replaced by $e_{1,2r+1}^*$.  Thus if $X_1,Y_1 \in \fk l_4$ and $X_2,Y_2 \in \fk l_4^c$ and $X=X_1+X_2$ and $Y=Y_1+Y_2$ then 
 \[ (\lambda-e_{1,2r+1}^*)(XY) = (\lambda-e_{1,2r+1}^*)(X_2Y_2).\]
 Let $\fka = \fk l_4^c\oplus \FF_q \spanning\{e_{1,2r+1}\}$ as in (\ref{fka}); this is a subalgebra so  $X_2Y_2 \in \fka$, and the final part of the lemma follows by noting that $\lambda \downarrow \fka = e_{1,2r+1}^* \downarrow \fka$.
\end{proof}

We are now prepared to discuss the irreducible constituents of the character $\xi_\lambda$ is detail.  The following proposition proves almost all of Theorem \ref{main} by example.

\begin{proposition}\label{main-prop} Choose an integer $r>1$ and let $n>6r$.  If $\fkn = \fkt_{n}(q)$ and $\lambda \in \fkn^*$ is defined by (\ref{lambda}), then the following hold:

\begin{enumerate}
\item[(1)] The character $\xi_\lambda$ of $\UT_{n}(q)$ is the sum of $q^{r-1}$ distinct irreducible characters of degree $q^{5r^2-2r}$, each of which is induced from a linear character of $\ols_\lambda$.  Furthermore, 
%\[ \xi_\lambda(1) =q^{5r^2+2r-1}\qquad\text{and}\qquad \langle \xi_\lambda, \xi_\lambda \rangle_{\UT_n(q)} = q^{r-1}.\]  
%We have
\[\ba \olfkl_\lambda &=\left \{ X \in \fkn : X_\alpha = X_{\imap(\alpha)} \text{ if }\alpha \in \A\cup \B\cup \C  \text{ and }X_{\alpha} = 0\text{ if } \alpha  \in  \D \cup \Z \right\}, 
\\
\olfks_\lambda &= \left\{ X \in \fkn : X_\alpha = X_{\imap(\alpha)} \text{ if }\alpha \in \A\cup\B\cup\C\cup\D  \text{ and } X_{\alpha} = 0\text{ if } \alpha  \in  \Z  \right\}, \ea\] 
and $\xi_\lambda(1) =q^{5r^2-r-1}$ and $ \langle \xi_\lambda, \xi_\lambda \rangle_{\UT_n(q)} = q^{r-1}$.

\item[(2)] If $p>0$ is the characteristic of $\FF_q$ and $p^i$ is the largest power of $p$ less than or equal to $r$, then all irreducible constituents of $\xi_\lambda$ take values in $\QQ(\zeta_{p^{i+1}})$, but some irreducible constituents %of $\xi_\lambda$
 have values which are not in $\QQ(\zeta_{p^i})$.

\item[(3)] The Kirillov functions $\psi_\lambda$ and  $\logpsi_\lambda$ have degree $q^{5r^2-2r}$, and  $\psi_\lambda$ is never a character of $\UT_n(q)$ while the exponential Kirillov function $\logpsi_\lambda$ is a character if and only if $r<p$.  %Furthermore, if $r<p$ then the distinct irreducible constituents of $\xi_\lambda$ are the exponential Kirillov functions $\logpsi_{\lambda+\nu}$ for $\nu \in \FF_q\spanning\{ e_{1,i}^* : i \in [2,r] \}$.

\end{enumerate}
\end{proposition}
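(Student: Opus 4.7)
The plan is to prove (1), (2), and (3) together by reducing everything to the abelian algebra group computations of Proposition \ref{cmplx-constits}. First I would reduce to the case $n = 6r+1$: for $n > 6r+1$, the extension of $\lambda$ by zero vanishes on the two-sided ideal of $\fkt_n(q)$ spanned by those $e_{ij}$ with $\max(i,j) > 6r+1$, so by Observation \ref{inflation} all four of $\xi_\lambda, \chi_\lambda, \psi_\lambda, \logpsi_\lambda$ on $\UT_n(q)$ inflate from their counterparts on $\UT_{6r+1}(q)$, and $\olfkl_\lambda, \olfks_\lambda$ pull back under the projection. With this reduction Lemma \ref{technical} supplies the explicit descriptions of $\olfkl_\lambda$ and $\olfks_\lambda$. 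A routine count, noting that $\imap$ is injective on $\A \cup \B \cup \C$ with disjoint image and restricts to a permutation of $\D$ decomposing into $r-1$ disjoint cycles of lengths $2, 3, \dots, r$, gives $\dim \olfkl_\lambda = 13r^2 + 4r + 1$ and $\dim \olfks_\lambda = 13r^2 + 5r$, from which one reads off $\xi_\lambda(1) = |G|/|\oll_\lambda| = q^{5r^2 - r - 1}$, $\langle \xi_\lambda, \xi_\lambda \rangle_G = |\olfks_\lambda|/|\olfkl_\lambda| = q^{r-1}$, and the soon-to-be-relevant index $|G|/|\ols_\lambda| = q^{5r^2 - 2r}$.

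For the decomposition statement in (1), set $\mu = \lambda \downarrow \olfks_\lambda$ and $\nu = e_{1,2r+1}^* \downarrow \olfks_\lambda$. The last clause of Lemma \ref{technical} asserts $(\lambda - e_{1,2r+1}^*)(XY) = 0$ on $\olfks_\lambda$, so expanding $\theta_{\mu-\nu}$ on products shows it is a linear character of $\ols_\lambda$ that is fixed by the left, right, and coadjoint actions of $\ols_\lambda$; consequently the orbit calculation in the definitions of supercharacters and Kirillov functions yields $\chi_\mu = \theta_{\mu-\nu} \cdot \chi_\nu$ and $\psi_\mu = \theta_{\mu-\nu} \cdot \psi_\nu$ inside $\ols_\lambda$. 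Next I take the subalgebra $\fka \subset \olfks_\lambda$ of (\ref{fka})---isomorphic to $\fka_{r+1}(q)$ by Remark (iii)---with complementary subspace $\fkh = \{X \in \olfks_\lambda : X_\alpha = 0 \text{ for } \alpha \in \D \cup \{(1, 2r+1)\}\}$. A direct case check against the tabulated structure of $\olfks_\lambda$ verifies that $\fkh$ is a two-sided ideal, so Observation \ref{inflation} applies to $\nu$ (which annihilates $\fkh$) and yields $\chi_\nu = \chi_\kappa \circ \pi$ and $\psi_\nu = \psi_\kappa \circ \pi$, where $\kappa$ is the functional (\ref{kappa}) on $A = 1 + \fka \cong \mathrm{A}_{r+1}(q)$ and $\pi : \ols_\lambda \to A$ is the quotient. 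Because the chain of Section \ref{xi} stabilizes at $\olfks_\lambda$, one has $\Ind_{\oll_\lambda}^{\ols_\lambda}(\theta_\lambda) = \chi_\mu = \theta_{\mu-\nu} \cdot (\chi_\kappa \circ \pi)$, which by (\ref{formula}) is a sum of $q^{r-1}$ distinct linear characters of $\ols_\lambda$. Theorem \ref{structural}(3) then identifies the constituents of $\xi_\lambda$ as the $q^{r-1}$ characters obtained by inducing each of these, each of degree $|G|/|\ols_\lambda| = q^{5r^2 - 2r}$, completing (1).

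Part (2) follows by combining Proposition \ref{cmplx-constits}(1) with the Galois-equivariance Remark after Theorem \ref{structural}: the irreducible constituents of $\chi_\kappa$ take values in $\QQ(\zeta_{p^{i+1}})$ with some lying outside $\QQ(\zeta_{p^i})$, and multiplying by the $\QQ(\zeta_p)$-valued character $\theta_{\mu-\nu}$, inflating along $\pi$, and inducing from $\ols_\lambda$ to $G$ all preserve these Galois properties. For (3), the Kirillov-function degree is $\psi_\lambda(1) = \logpsi_\lambda(1) = |G|/|\ols_\lambda| \cdot \psi_\mu(1) = q^{5r^2 - 2r}$ since $\psi_\mu(1) = \psi_\kappa(1) = 1$ (as $A$ is abelian); Theorem \ref{structural}(3) then reduces the character property for $\psi_\lambda$ and $\logpsi_\lambda$ to the same property for $\psi_\mu$ and $\logpsi_\mu$ on $\ols_\lambda$. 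The identity $\psi_\mu = \theta_{\mu-\nu} \cdot (\psi_\kappa \circ \pi)$ makes $\psi_\mu$ a character exactly when $\psi_\kappa$ is, which by Proposition \ref{cmplx-constits}(2) requires $r + 1 = 2$, so $\psi_\lambda$ is never a character for $r > 1$. For the exponential case, since $(\lambda - e_{1,2r+1}^*)(XY) = 0$ on $\olfks_\lambda$ forces $\mu - \nu$ to vanish on every iterated product in $\olfks_\lambda$ and hence on every term of the Baker--Campbell--Hausdorff expansion of $\log(gh)$, the map $g \mapsto \theta\bigl((\mu - \nu)(\log g)\bigr)$ is a linear character $f$ of $\ols_\lambda$ with $\logpsi_\mu = f \cdot (\logpsi_\kappa \circ \pi)$; hence $\logpsi_\lambda$ is a character exactly when $\logpsi_\kappa$ is, i.e.\ when $r + 1 \leq p$ by Proposition \ref{cmplx-constits}(3).

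The main obstacle is the verification that $\fkh$ is a two-sided ideal of $\olfks_\lambda$ and that the chain of Section \ref{xi} for $\mu$ inside $\ols_\lambda$ stabilizes immediately at $\olfks_\lambda$: both are routine but demand careful bookkeeping with the position sets $\A, \B, \C, \D, \Z$ and the map $\imap$ introduced in Lemma \ref{technical}.
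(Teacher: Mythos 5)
Your argument tracks the paper's proof step for step: reduction to $n=6r+1$ via Observation \ref{inflation}, the Lemma \ref{technical} descriptions and attendant dimension counts, the split of $\lambda\downarrow\olfks_\lambda$ into a degenerate summand plus $e_{1,2r+1}^*$, the tensor factorization of the resulting supercharacter of $\ols_\lambda$ into a linear character times an inflation of $\chi_\kappa$ from $\mathrm{A}_{r+1}(q)$, and Proposition \ref{cmplx-constits} plus the Galois remark after Theorem \ref{structural} to finish. Aside from swapping the names $\mu$ and $\nu$ relative to the paper, the only notable difference is that you rederive via Baker--Campbell--Hausdorff what the paper records directly as $\chi_\nu=\psi_\nu=\logpsi_\nu$ being linear, and the ``obstacle'' you flag of showing the chain for $\mu$ in $\olfks_\lambda$ stabilizes immediately is in fact automatic from property (c) in Section \ref{xi}, since $\olfkl_\lambda$ and $\olfks_\lambda$ are by construction the stable terms of the chain for $\lambda$.
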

 
\begin{remark}
A character of an algebra group is \emph{well-induced} if it is induced from a linear supercharacter of an algebra subgroup; see Section 4 in \cite{supp0}.  
One can adapt our arguments to prove that the Kirillov function $\psi_\mu$ is not a character for every $\mu\in \Xi_\lambda$, and given this, Proposition 4.1 in \cite{supp0} implies that none of the $q^{r-1}$ irreducible constituents of $\xi_\lambda$ are well-induced.  Moreover, one can show that all of our statements, except the descriptions of $\olfkl_\lambda$ and $\olfks_\lambda$ which change somewhat, hold verbatim if the $6r+1$ nonzero values of $\lambda_{ij}$ are replaced by arbitrary elements of $\FF_q^\times$, and that each of these $(q-1)^{6r+1}$ choices of $\lambda$ yields a distinct character $\xi_\lambda$.  For $r=2$  this gives rise to the $q(q-1)^{13}$  irreducible characters of $\UT_{13}(q)$ identified by Evseev in \cite[Theorem 2.7]{E} which are not well-induced. %  In particular, all irreducible characters of $\UT_{13}(q)$ which are not Kirillov functions appear as constituents of supercharacters with the shape specified by Theorem \ref{main}.
\end{remark}

\begin{proof}
By Observation \ref{inflation} we may assume without loss of generality that $n=6r+1$, since if $n>6r+1$ then we have a vector space decomposition $\fkn = \fkt_{6r+1}(q)\oplus \h$ where $\h$ is the two-sided ideal of matrices in $\fkn$ with zeros in the first $6r+1$ columns.
%\[\fkn = \fka  \oplus \h,\quad\text{where }\left\{\ba &\fka = \{ X \in \fkn : X_{ij} = 0\text{ for }j>6r+1\}\cong \fkt_{6r+1}(q), 
%\\ 
%&\h =\{ X\in \fkn : X_{ij} = 0\text{ for }j\leq 6r+1\}\text{ is an ideal}.\ea\right.\] 

Let $\fk l = \olfkl_\lambda$ and $\fk s = \olfks_\lambda$  and  $L = \oll_\lambda$ and  $S = \ols_\lambda$ and $G = \UT_{n}(q)$.  Our descriptions of $\fk l$ and $\fk s$ are immediate from Lemma \ref{technical}.  Let $\mu = e_{1,2r+1}^* \downarrow \fk s$ and $\nu = (\lambda - e_{1,2r+1}^*) \downarrow \fk s$, so that $\lambda \downarrow \fk s  = \mu + \nu$.  By definition, $\xi_\lambda = \Ind_L^G( \theta_\lambda ) = \Ind_S^G (\chi_{\mu+\nu})$, and by Theorem \ref{structural} the irreducible constituents of $\xi_\lambda$ are in bijection with the irreducible constituents of the fully ramified supercharacter $\chi_{\mu+\nu}$ of $S$.    Also by Theorem \ref{structural}, we have $\langle \xi_\lambda,\xi_\lambda\rangle_G = |L|/|S| =|\fk s / \fk l|= q^{r-1}$ and  
$ \xi_\lambda(1) = |G|/|L|=|\fkn /\fk l| =q^{|\A| + |\B| +|\C| + |\D| +|\Z|} =  q^{5r^2-r-1}$ since 
\[\barr{c} |\A| = |\B| = |\C| = \frac{1}{2}r^2 - \frac{1}{2} r,\qquad |\D| =\frac{1}{2} r^2 +\frac{1}{2}r - 1,\qquad |\Z| = 3r^2.\earr\]

The last part of Lemma \ref{technical} states that $\nu(XY) = 0$ for all $X,Y \in \fk s$, and this implies that $g\nu h = \nu$ and $g(\mu+\nu)h = g \mu h + \nu$ for all $g,h \in S$.  Because of this property, it follows from the definition (\ref{superchar-def}) that $\chi_{\mu+\nu} = \chi_\mu \otimes \chi_\nu$ and $\psi_{\mu+\nu} = \psi_\mu \otimes \psi_{\nu}$, and that $\chi_\nu=\psi_\nu = \logpsi_\nu \in \Irr(S)$ is the linear character with the formula $\chi_\nu(g) = \theta\circ\nu(g-1)$ for $g \in S$.  
To prove the rest of the proposition, we decompose the supercharacter $\chi_\mu$ of $S$, using  Observation \ref{inflation} and the results of Section \ref{cmplx-chars}.
To this end, we observe that as a vector space $\fk s =  \fk a \oplus \h$ where 
\[\ba   
\fka &= \{ X \in \fk s : X_\alpha =0 \text{ if }\alpha \notin \D\text{ or }\alpha \neq (1,2r+1)\}, \\
\h &= \{ X \in \fk s : X_\alpha =0 \text{ if }\alpha \in \D\text{ or }\alpha = (1,2r+1)\}=\{ X \in \fk l : X_{1,2r+1} = 0 \}.
\ea
\]  We know that $\fk l$ is a two-sided ideal in $\fk s$, and it is easy to see that if $X \in \fk l$ and $Y \in \fk s$ then $(XY)_{1,2r+1} = (YX)_{1,2r+1} = 0$, since $Y_{i,2r+1} = 0$ for all $i>1$ and since $X_{i,2r+1} =0 $ whenever $Y_{1,i}\neq 0$.  Therefore $\h$ is also a two-sided ideal in $\fk s$.  Furthermore, it is clear that $\ker(\mu) \supset \h$.  

Now, as observed in Remark (iii) above, the vector space $\fka$ is a subalgebra naturally isomorphic to the algebra $\fka_{r+1}(q)$; % defined in Section \ref{cmplx-chars}, 
under this isomorphism $\mu \downarrow \fka$ becomes identified with the functional $\kappa \in \fka_{r+1}(q)^*$ defined in Section \ref{cmplx-chars}.  Consequently, by Observation \ref{inflation} the irreducible constituents of $\chi_\mu$ are in bijection with  those of the supercharacter $\chi_\kappa$ of $\mathrm{A}_{r+1}(q)$, via a map of the form $\psi \mapsto \psi \circ \pi$ where $\pi : S \to \mathrm{A}_{r+1}(q)$ is some surjective homomorphism.  In particular, the characters on each side of this bijection have the same sets of values.
Since $\chi_\nu = \psi_\nu = \logpsi_\nu$ is linear with values in $\QQ(\zeta_p)$, our assertions in parts (2) and (3) thus follow by a combination of Observation \ref{inflation}, Proposition \ref{cmplx-constits}, and the remarks following Theorem \ref{structural}.
%
%As such, $\chi_\mu$ has $q^{r-1}$ linear constituents and Proposition \ref{cmplx-constits} implies that  if $p^i$ is the largest power of $p$ dividing $r$, then these constituents all take values in $\QQ(\zeta_{p^{i+1}})$ and some take values which are not in $\QQ(\zeta_{p^i})$.  Since $\chi_\nu$ is linear and takes values in $\QQ(\zeta_p)$, the preceding sentence remains true if we replace $\chi_\mu$ with $\chi_{\mu+\nu}$.  Our assertions about the values of the irreducible constituents of $\xi_\lambda$ now follow from the Corollary \ref{andre-cor} and the remarks which proceed it.  
%%
%
%Observation \ref{inflation} and Proposition  \ref{cmplx-constits} likewise imply that $\psi_\mu$ is not a character of $S$ and that $\logpsi_\mu$ is a character of $S$ if and only if $r+1\leq p$.  Thus $\psi_{\mu+\nu} = \psi_\mu \otimes \psi_{\nu}$ is not a character since $\psi_\nu$ is a linear character, and in turn $\psi_\lambda$ is not a character by Proposition \ref{less}.  Similarly,  $\logpsi_\mu$ is a character if and only if $r<p$, so the same is true of $\logpsi_{\mu+\nu}=\logpsi_\mu \otimes\logpsi_\nu=\logpsi_\mu \otimes\psi_\nu$, which implies that $\logpsi_\lambda$ is a character if and only if $r<p$ by Corollary \ref{less-cor}. %The last part (3) follows by noting $\lambda S = \lambda +\FF_q\spanning\{ e_{1,i}^* : i \in [2,r]\}$ and 
%%
\end{proof}

As noted when defining $\lambda$, the character $\xi_\lambda$ is a constituent of the supercharacter of $\UT_n(q)$ whose shape is the set partition described in Theorem \ref{main}.  It remains to show that any supercharacter with the same shape has a constituent with  the same properties as $\xi_\lambda$.  This is immediate from the following observation, which proves Theorem \ref{main} in its entirety.

\begin{observation}\label{diagonal} The group of automorphisms of $\UT_n(q)$ of the form $g \mapsto D g D^{-1}$, where $D$ is a diagonal matrix in $\GL(n,\FF_q)$, acts transitively on the set of supercharacters of $\UT_n(q)$ with a given shape. 
\end{observation}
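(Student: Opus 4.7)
The plan is to make the induced action of diagonal conjugation on supercharacters completely explicit, and then to exploit the forest structure of the supports of quasi-monomial functionals.

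First, for a diagonal matrix $D = \diag(d_1,\ldots,d_n) \in \GL(n,\FF_q)$ write $\varphi_D(g) = DgD^{-1}$. Using the identity $(DYD^{-1})_{ij} = d_id_j^{-1}Y_{ij}$ for $Y \in \fkt_n(q)$, I would check directly that $\theta_\mu \circ \varphi_D = \theta_{\mu^D}$, where $\mu^D \in \fkt_n(q)^*$ is defined by $\mu^D_{ij} = d_id_j^{-1}\mu_{ij}$. Since $D$ normalizes $\UT_n(q)$, one then verifies that $(g\mu h)^D = (D^{-1}gD)\mu^D(D^{-1}hD)$ for all $g,h \in G = \UT_n(q)$, so the map $\mu \mapsto \mu^D$ carries the two-sided orbit $G\lambda G$ bijectively onto $G\lambda^D G$. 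Combining these observations with the formula (\ref{superchar-def}) then yields $\chi_\lambda \circ \varphi_D = \chi_{\lambda^D}$. Thus conjugation by $D$ permutes supercharacters via $\chi_\lambda \mapsto \chi_{\lambda^D}$, and this evidently preserves both the support and the shape of a quasi-monomial $\lambda$.

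Next, I would argue that within the class of quasi-monomial maps, the shape determines the support.  Since each row and each column contains at most one nonzero $\lambda_{ij}$, the directed graph on $[n]$ with edge set $\{(i,j) : \lambda_{ij}\neq 0\}$ has in-degree and out-degree at most one at every vertex, so each weakly connected component is a simple directed path; and since every edge $(i,j)$ has $i<j$, the vertices of such a path must be traversed in strictly increasing index order. Hence for any part $\{i_1 < i_2 < \cdots < i_k\}$ of the shape of $\lambda$, the contribution to the support of $\lambda$ is exactly the ``standard arc set'' $\{(i_m,i_{m+1}) : 1 \leq m < k\}$. In particular, any two quasi-monomial maps of shape $\Lambda$ share a common support $\mathcal{A} \subset \{(i,j) : i<j\}$ and differ only in the $\FF_q^\times$-valued entries at positions in $\mathcal{A}$.

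Finally, given quasi-monomial $\lambda, \tilde\lambda$ with common shape $\Lambda$ and common support $\mathcal{A}$, I need only solve the system $d_id_j^{-1} = \tilde\lambda_{ij}/\lambda_{ij}$ in $\FF_q^\times$ for all $(i,j) \in \mathcal{A}$, since this forces $\lambda^D = \tilde\lambda$ and hence $\chi_\lambda \circ \varphi_D = \chi_{\tilde\lambda}$. Because $\mathcal{A}$ decomposes as a disjoint union of upward paths, one per nontrivial part of $\Lambda$, there are no cycles to obstruct consistency: within each part $\{i_1 < \cdots < i_k\}$ one sets $d_{i_1} = 1$ and recursively defines $d_{i_{m+1}} = d_{i_m} \lambda_{i_m,i_{m+1}}/\tilde\lambda_{i_m,i_{m+1}}$, and one sets $d_i = 1$ for any index $i$ incident to no arc of $\mathcal{A}$. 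No step presents a serious obstacle; the only substantive point is the acyclicity appearing in the final step, which reduces transitivity to an immediate induction along each path.
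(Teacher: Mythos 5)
Your computation that $\chi_\lambda\circ\varphi_D=\chi_{\lambda^D}$ with $\lambda^D_{ij}=d_id_j^{-1}\lambda_{ij}$ matches the paper's, but you finish differently. The paper identifies the stabilizer of $\chi_\mu$ inside the diagonal torus as $\{D:D_{ii}=D_{jj}\text{ whenever $i,j$ lie in the same part of }\Lambda\}$, applies the orbit--stabilizer theorem to get an orbit of size $(q-1)^{n-\ell}$ (with $\ell$ the number of parts), and observes this equals the total count of quasi-monomial functionals of shape $\Lambda$; transitivity follows by a counting coincidence. You instead prove transitivity constructively: you note that for quasi-monomial $\lambda$ the support digraph has in- and out-degree at most one with all arcs increasing, so each weakly connected component is forced to be the ``consecutive'' increasing path on its vertex set, whence the shape determines the support; then you solve the multiplicative system $d_id_j^{-1}=\tilde\lambda_{ij}/\lambda_{ij}$ by induction along each path, acyclicity guaranteeing consistency. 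Both are correct. Your version has the advantage of exhibiting the conjugating $D$ explicitly and isolating the path/acyclicity structure that makes the system solvable, while the paper's version is shorter and sidesteps the support-determination lemma entirely by appealing to the count $(q-1)^{n-\ell}$ (which implicitly uses the same fact that the support size is $n-\ell$).
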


\begin{proof} Fix a diagonal matrix $D \in \GL(n,\FF_q)$ and let $\varphi_D$ be the conjugation map of $g\mapsto DgD^{-1}$.  From (\ref{superchar-def}) ones sees that if $\mu \in \fkt_n(q)^*$ then $\chi_\mu \circ \varphi_D = \chi_{\nu}$, where $\nu_{ij} = \frac{D_{ii}}{D_{jj}} \mu_{ij}$ for all $i,j \in [n]$.  
We may assume that $\mu$ is quasi-monomial (see the discussion in Section \ref{pattern}), and it is obvious that $\chi_\mu$ and $\chi_\nu$ have the same shape $\Lambda$.  Furthermore, it is not difficult to see that $\chi_\mu = \chi_\nu$ (which occurs if and only if $\mu=\nu$) if and only if $D_{ii} = D_{jj}$ whenever $i,j\in [n]$ belong to the same part of $\Lambda$.  
By the orbit-stabilizer theorem, the orbit of $\chi_\mu$ under the action of the diagonal matrices in $\GL(n,\FF_q)$ thus has cardinality $(q-1)^{n-\ell}$ where $\ell$ is the number of parts of $\Lambda $.  As this is precisely the number of quasi-monomial $\nu \in \fkt_n(q)^*$ with shape $\Lambda$, our statement follows.
\end{proof}

\end{document}